\newtheorem{theorem}{Theorem}[section]
\newtheorem{lemma}[theorem]{Lemma}
\newtheorem{definition}[theorem]{Definition}
\newtheorem{proposition}[theorem]{Proposition}
\theoremstyle{definition}
\def\XXint#1#2#3{{\setbox0=\hbox{$#1{#2#3}{\int}$}
     \vcenter{\hbox{$#2#3$}}\kern-.5\wd0}}
\def\ud{{\rm\,d}}
\def\R{\mathbb{R}}
\def\Sph{\mathbb{S}}
\def\LL{\mathcal{L}}
\def\NN{\mathcal{N}}
\def\OO{\mathcal{O}}
\def\xb{\mathbf{x}}
\def\yb{\mathbf{y}}
\def\zb{\mathbf{z}}
\newcommand{\dsp}{\displaystyle}
\def\pr(#1){\left({#1}\right)}
\def\br[#1]{\left[{#1}\right]}
\def\abs#1{\left|{#1}\right|}
\def\norm#1{\left\|{#1}\right\|}
\def\conj#1{\overline{#1}}
\def\ii{{\rm i}}
\DeclareRobustCommand{\rchi}{{\mathpalette\irchi\relax}}
\newcommand{\irchi}[2]{\raisebox{\depth}{$#1\chi$}} 
\journal{Journal of Computational Physics}
\begin{document}

\begin{frontmatter}



\title{A spectral method for nonlocal diffusion operators on the sphere}


\author[RMS]{Richard Mika\"el Slevinsky\corref{cor1}}
\ead{Richard.Slevinsky@umanitoba.ca}
\ead[url]{https://home.cc.umanitoba.ca/~slevinrm/}

\author[HMQD]{Hadrien Montanelli}
\ead{hlm2137@columbia.edu}

\author[HMQD]{Qiang Du}
\ead{qd2125@columbia.edu}

\cortext[cor1]{Corresponding author}
\address[RMS]{Department of Mathematics, University of Manitoba, Winnipeg, Canada}
\address[HMQD]{Department of Applied Physics and Applied Mathematics, Columbia University, 
New York, United States}

\begin{abstract}
We present algorithms for solving spatially nonlocal diffusion models on the unit sphere with spectral accuracy in space. Our algorithms are based on the diagonalizability of nonlocal diffusion operators in the basis of spherical harmonics, the computation of their eigenvalues to high relative accuracy using quadrature and asymptotic formulas, and a fast spherical harmonic transform. These techniques also lead to an efficient implementation of high-order exponential integrators for time-dependent models. We apply our method to the nonlocal Poisson, Allen--Cahn and Brusselator equations.
\end{abstract}

\begin{keyword}

nonlocal PDEs on the sphere \sep nonlocal diffusion operators \sep spectral methods \sep fast spherical harmonics  \sep exponential integrators \sep pattern formation



\MSC[2010] 33C55 \sep 42B37 \sep 65D30 \sep 65L05 \sep 65M20 \sep 65M70

\end{keyword}

\end{frontmatter}

\section{Introduction}

Nonlocal models have been extensively studied in many fields such as materials science,
thermodynamics, fluid dynamics, fracture mechanics, biology and image analysis~\cite{bates1999, bobaru2010, du2017a, gilboa2008, kao2010, silling2000}.
Many of these models can be conveniently formulated using nonlocal integral operators generalizing the standard differential operators of vector calculus~\cite{du2012, du2013}. 
In this paper, we propose a fast spectral method for computing solutions of nonlocal models of the form
\begin{equation}
\label{PDE}
u_t = \epsilon^2\LL_\delta u + \NN(u), \quad u(t=0,\xb) = u_0(\xb), \quad \epsilon>0,
\end{equation}

\noindent where $u(t,\xb)$ is a function of time $t\ge0$ and position $\xb$ on the unit sphere $\mathbb{S}^2\subset\mathbb{R}^3$,
$\NN$ is a nonlinear operator with constant coefficients (e.g., $\NN(u)=u-u^3$), and $\LL_\delta$ is a nonlocal \textit{Laplace--Beltrami operator},
\begin{equation}
\LL_\delta u({\bf x}) = \int_{\Sph^2} \rho_\delta(\abs{{\bf x}-{\bf y}})\left[ u({\bf y}) - u({\bf x})\right]\ud\Omega(\yb).
\label{operator}
\end{equation}

\noindent In the definition \eqref{operator} above, $\vert\xb -\yb\vert$ is the Euclidean distance between $\xb$ and $\yb$ in $\R^3$,
$\ud\Omega(\yb)$ denotes the standard measure on $\Sph^2$ and $\rho_\delta$ is a suitably defined nonlocal kernel with horizon $0<\delta\le2$, 
which determines the range of interactions. 
$\LL_\delta$ is also called a nonlocal diffusion operator on the sphere.
The function $u$ can be real or complex and the equation \eqref{PDE} can be a single equation as well as a system of equations.

There has been substantial work on the numerical approximation of the equation \eqref{PDE} in Euclidean domains~\cite{du2017b, du2018},
including a spectral method for nonlocal diffusion operators defined over a periodic cell in $\R^d$ ($d\leq 3$) \cite{du2017a, du2016}.
However, no study has been attempted so far to investigate similar discretizations on the sphere. It is of practical interests 
to study the extension to non-Euclidean geometries with the sphere being a representative example, e.g., for the modelling of anomalous diffusion, pattern formation and image analysis.

For problems defined in spherical geometries there are several methods to discretize the spatial part of the local 
partial differential equation (PDE) version of the equation \eqref{PDE} with spectral accuracy.
The most popular methods include spherical harmonics~\cite{atkinson2012} and the double Fourier sphere (DFS) method~\cite{merilees1973, orszag1973}, 
recently revisited by Montanelli and Nakatsukasa~\cite{montanelli2017phd, montanelli2017b} and Townsend et al.~\cite{townsend2016}.
On the one hand, spherical harmonics are the natural spectral basis for solving local and nonlocal equations on the sphere since they diagonalize both the local and
nonlocal Laplace--Beltrami operators, as we will show in Section~2.
On the other hand, the DFS method is efficient because it allows one to use two-dimensional fast Fourier transforms (FFTs), which leads to $\OO(n^2\log n)$ complexity per time-step 
when the spatial part of \eqref{PDE} is discretized at $\OO(n^2)$ points. 

For spherical harmonics, the picture was quite different before the 2000s. 
Since a conventional synthesis and analysis of spherical harmonic expansions on tensor-product grids costs $\OO(n^3)$, the cost per time-step was not competitive. 
The picture became more similar from 2006 when Tygert, first with Rokhlin~\cite{rokhlin2006} then independently \cite{tygert2008, tygert2010}, developed asymptotically optimal spherical harmonic transforms with $\OO(n^2\log n)$ run times. 
The pre-computation was also asymptotically optimal, though this is anticipated only for absurdly high degrees.
More recently, Slevinsky proposed two new fast spherical harmonic transforms, first with backward stability, $\OO(n^2\log^2 n)$ run-time and $\OO(n^3\log n)$ pre-computation costs \cite{slevinsky2017a},
then with $\OO(n^2\log^2 n)$ run-time and $\OO(n^{\frac{3}{2}}\log n)$ pre-computation complexities \cite{slevinsky2017b}.
Equipped with the first fast transform of Slevinsky, we present a spectral method for solving nonlocal diffusion models on the sphere based on eigenvalues and spherical harmonics, 
which we combine with high-order exponential integrators like ETDRK4~\cite{cox2002} for time-dependent models.

The paper is structured as follows. 
In Section 2, we prove that the nonlocal Laplace--Beltrami operator on the sphere is diagonalized in the basis of 
spherical harmonics and derive a closed-form expression for its eigenvalues, which we compute to high relative accuracy in Section 3.
We then solve the nonlocal Poisson equation in Section 4 and nonlocal time-dependent equations in Section 5.

\section{Eigendata of the nonlocal Laplace--Beltrami operator on the sphere}

Let $\xb\in\Sph^2$ be a point on the sphere parameterized by the angles $(\theta,\varphi)$, where $\theta\in[0,\pi]$ is the colatitude and $\varphi\in[0,2\pi)$ is the longitude,
and let $\ud\Omega = \sin\theta\ud\theta\ud\varphi$ be the measure generated by the solid angle $\Omega$ subtended by a spherical cap.
The spherical harmonics as given by
\begin{equation}\label{eq:sphericalharmonics}
Y_\ell^m(\xb) = Y_\ell^m(\theta,\varphi) = \dfrac{e^{\ii m\varphi}}{\sqrt{2\pi}} \underbrace{\ii^{m+|m|}\sqrt{(\ell+\tfrac{1}{2})\dfrac{(\ell-m)!}{(\ell+m)!}} P_\ell^m(\cos\theta)}_{\tilde{P}_\ell^m(\cos\theta)},
\quad \ell\geq0,\; -\ell\le m\le \ell,
\end{equation}

\noindent where the notation $\tilde{P}_\ell^m$ for the associated Legendre polynomials is used to denote orthonormality 
for fixed $m$ in the sense of $L^2([-1,1])$, are eigenfunctions with eigenvalues $-\ell(\ell+1)$ of the local Laplace--Beltrami 
operator $\LL_0$ defined by
\begin{equation}
\LL_0 u(\theta,\varphi) = u_{\theta\theta} + \frac{\cos\theta}{\sin\theta}u_\theta + \frac{u_{\varphi\varphi}}{\sin^2\theta},
\end{equation}

\noindent that is,
\begin{equation}
\LL_0 Y_\ell^m(\theta,\varphi) = -\ell(\ell+1)Y_\ell^m(\theta,\varphi).
\end{equation}

Spherical harmonics $Y_\ell^m$ and Legendre polynomials $P_\ell$ satisfy the addition theorem~\cite[Eq.~(2.27)]{atkinson2012}
\begin{equation}
P_\ell(\xb\cdot\yb) = \dfrac{4\pi}{2\ell+1}\sum_{m=-\ell}^{+\ell} Y_\ell^m({\bf x})\conj{Y_\ell^m({\bf y})},
\end{equation}

\noindent which gives us the integral representation~\cite[Eq.~(2.33)]{atkinson2012}
\begin{equation}
Y_\ell^m(\xb) = \dfrac{2\ell+1}{4\pi}\int_{\Sph^2} P_\ell(\xb\cdot\yb) Y_\ell^m(\yb) \ud\Omega(\yb).
\end{equation}

\noindent Furthermore, for $f\in L^1([-1,1])$, the Funk--Hecke formula states that~\cite[Th.~2.22]{atkinson2012}
\begin{equation}
\int_{\Sph^2} f(\xb\cdot\zb) P_\ell(\yb\cdot\zb)\ud\Omega(\zb) = 2\pi P_\ell(\xb\cdot\yb)\beta_\ell, \quad{\rm where}\quad \beta_\ell=\int_{-1}^1P_\ell(t)f(t)\ud t.
\label{FunkHecke}
\end{equation}

\noindent From the Funk--Hecke formula \eqref{FunkHecke} we obtain the following proposition, which will be useful for our particular choice for the kernel 
$\rho_\delta$ in \eqref{operator}. The proof is given in Appendix~\ref{appendix:GFH}.

\begin{proposition}[Generalized Funk--Hecke formula]\label{proposition:GFH}
For a function $f$ with $t\mapsto(1-t)f(t)\in L^1([-1,1])$, we have
\begin{equation}
\int_{\Sph^2} f(\xb\cdot\zb)[P_\ell(\yb\cdot\zb)-P_\ell(\xb\cdot\yb)]\ud\Omega(\zb) 
= 2\pi P_\ell(\xb\cdot\yb)\gamma_\ell, \quad \gamma_\ell=\int_{-1}^1[P_\ell(t)-1]f(t)\ud t.
\label{GeneralizedFunkHecke}
\end{equation}
\end{proposition}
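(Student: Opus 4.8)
The plan is to reduce the generalized formula to the classical Funk--Hecke formula \eqref{FunkHecke} by splitting the integrand. First I would write
\begin{equation}
\int_{\Sph^2} f(\xb\cdot\zb)[P_\ell(\yb\cdot\zb)-P_\ell(\xb\cdot\yb)]\ud\Omega(\zb)
= \int_{\Sph^2} f(\xb\cdot\zb)P_\ell(\yb\cdot\zb)\ud\Omega(\zb) - P_\ell(\xb\cdot\yb)\int_{\Sph^2} f(\xb\cdot\zb)\ud\Omega(\zb),
\end{equation}
which is legitimate provided each piece is individually integrable. The second integral is a special case of the first Funk--Hecke identity with $\ell=0$ (using $P_0\equiv1$), giving $2\pi\int_{-1}^1 f(t)\ud t$; the first integral, if $f$ itself were in $L^1$, would be $2\pi P_\ell(\xb\cdot\yb)\beta_\ell$ by \eqref{FunkHecke}. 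Subtracting and combining the two constants would then yield $2\pi P_\ell(\xb\cdot\yb)\int_{-1}^1[P_\ell(t)-1]f(t)\ud t = 2\pi P_\ell(\xb\cdot\yb)\gamma_\ell$, which is exactly \eqref{GeneralizedFunkHecke}.

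The main obstacle is that we only assume $t\mapsto(1-t)f(t)\in L^1([-1,1])$, \emph{not} $f\in L^1([-1,1])$ — so $f$ may have a non-integrable singularity as $t\to1^-$ (i.e.\ as $\zb\to\xb$), and the two pieces above need not converge separately. The point is that the combination $P_\ell(t)-1$ vanishes to first order at $t=1$, since $P_\ell(1)=1$ and $P_\ell$ is a polynomial, so $P_\ell(t)-1 = (t-1)q_\ell(t)$ for a polynomial $q_\ell$ bounded on $[-1,1]$; hence $[P_\ell(t)-1]f(t)$ is dominated by a constant times $(1-t)|f(t)|$ and is integrable, making $\gamma_\ell$ well defined. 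To handle the left-hand side rigorously I would regularize: for $0<\eta<2$ let $\Sph^2_\eta = \{\zb : \xb\cdot\zb \le 1-\eta\}$ be the sphere with a small cap around $\xb$ removed. On $\Sph^2_\eta$ the function $f(\xb\cdot\zb)$ is integrable, the splitting is valid, and both classical Funk--Hecke applications go through on the truncated domain up to error terms supported on the excised cap. I would then show these cap contributions — both the genuine integral $\int_{\text{cap}} f(\xb\cdot\zb)[P_\ell(\yb\cdot\zb)-P_\ell(\xb\cdot\yb)]\ud\Omega(\zb)$ and the auxiliary cap terms generated by truncating \eqref{FunkHecke} — tend to $0$ as $\eta\to0^+$, using the bound $|P_\ell(\yb\cdot\zb)-P_\ell(\xb\cdot\yb)| \lesssim |\zb-\xb| \lesssim \sqrt{\eta}$ on the cap together with $\ud\Omega(\text{cap}) = \OO(\eta)$ and, where $f$ alone appears, the observation that its cap integral is controlled by $\int_{1-\eta}^1 |f(t)|\ud t \le (1/\eta)\int_{1-\eta}^1 (1-t)|f(t)|\,\ud t \cdot$... more carefully, by dominated convergence applied to $(1-t)f(t)\,\mathbf{1}_{[1-\eta,1]}$ after factoring the vanishing of $P_\ell-1$. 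Passing to the limit $\eta\to0$ then gives \eqref{GeneralizedFunkHecke}.

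An alternative, slicker route avoids truncation entirely: define $g(t) = [P_\ell(t)-1]f(t)$, which lies in $L^1([-1,1])$ by the argument above, and apply the classical Funk--Hecke formula not to $f$ but directly to the kernel $f(\xb\cdot\zb)$ paired against the difference $P_\ell(\yb\cdot\zb)-P_\ell(\xb\cdot\yb)$ by expanding $P_\ell(\yb\cdot\zb)$ in spherical harmonics via the addition theorem and integrating term by term; the orthogonality of the $Y_\ell^m$ collapses the sum to a single term proportional to $P_\ell(\xb\cdot\yb)$, and tracking the $m=0$-type contribution produces the $\int_{-1}^1[P_\ell(t)-1]f(t)\ud t$ factor. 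Either way, the computation itself is routine once integrability is secured; the only real content is the $(1-t)$-weighted integrability bookkeeping near the diagonal $\zb=\xb$, which is precisely what the hypothesis on $f$ is designed to furnish. I expect the truncation argument to be the cleanest to write out in the appendix.
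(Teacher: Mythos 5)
Your overall skeleton---excise a small cap around $\xb$, apply the classical Funk--Hecke formula to the truncated function, and pass to the limit---is the same strategy as the paper's proof of Proposition~\ref{proposition:GFH}, which works with cutoffs $f_k$ of $f$ and lets the truncation parameter go to zero. The problem is in the step you treat as routine bookkeeping: showing that the cap contributions vanish. Your bound $\abs{P_\ell(\yb\cdot\zb)-P_\ell(\xb\cdot\yb)}\lesssim\abs{\zb-\xb}=\sqrt{2(1-\xb\cdot\zb)}$ gains only half a power of $(1-t)$, whereas the hypothesis supplies integrability of $(1-t)f(t)$, i.e.\ a full power. These do not match: take $f(t)=(1-t)^{\alpha-1}$ with $-1<\alpha\le-\tfrac12$ (exactly the kernel range used in the paper); then $(1-t)f\in L^1$ but $\sqrt{1-t}\,\abs{f(t)}$ is \emph{not} integrable near $t=1$, so the cap integral of $\abs{f(\xb\cdot\zb)}\abs{P_\ell(\yb\cdot\zb)-P_\ell(\xb\cdot\yb)}$ diverges and your dominated-convergence step has no dominating function. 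Your remark that ``$P_\ell(t)-1$ vanishes to first order at $t=1$'' secures only the right-hand side $\gamma_\ell$; on the left-hand side the difference is $P_\ell(\yb\cdot\zb)-P_\ell(\xb\cdot\yb)$ with $\yb\ne\xb$, and pointwise this is generically of size $\sqrt{1-\xb\cdot\zb}$, not $1-\xb\cdot\zb$. The same objection applies to your ``slicker'' addition-theorem route: the individual terms $\int_{\Sph^2}f(\xb\cdot\zb)Y_\ell^m(\zb)\ud\Omega(\zb)$ need not converge, so term-by-term integration is not justified.

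The missing idea---and the real content of the paper's Lemma~\ref{lemma:GFHlhs}---is an angular cancellation. Writing $\zb$ in spherical coordinates with $\xb$ at the pole, one has $\yb\cdot\zb-\xb\cdot\yb=(y_1\cos\varphi+y_2\sin\varphi)\sin\theta+y_3(\cos\theta-1)$: the dangerous $\OO(\sqrt{1-t})$ part is the $\sin\theta$ term, but it oscillates in $\varphi$, and after integrating over the latitude circle it either cancels (against the $m=0$ component of the remaining smooth factor, which is a finite spherical harmonic series of the divided difference $[P_\ell(\yb\cdot\zb)-P_\ell(\xb\cdot\yb)]/[\yb\cdot\zb-\xb\cdot\yb]$) or picks up an extra factor of $\sin\theta$ from $Y_\lambda^m$ with $\abs{m}>0$, rendering $\sin^2\theta/(1-\cos\theta)$ bounded. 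This upgrades the effective smallness near the cap from $\sqrt{1-t}$ to $1-t$, which is exactly what the hypothesis $(1-t)f\in L^1$ can absorb; it is also what makes the left-hand side well defined in the first place (for the strongly singular range it converges only as the limit of integrals over $\theta\ge\varepsilon$, not absolutely). Without this lemma, or some equivalent exploitation of the $\varphi$-average, your truncation argument does not close.
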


Let us consider now the nonlocal operator \eqref{operator}. We will set
\begin{equation}
\tilde{\rho}_\delta({\bf x}\cdot{\bf y}) = \rho_\delta(\abs{{\bf x}-{\bf y}}) = \rho_\delta(\sqrt{2(1-{\bf x}\cdot{\bf y})})
= \rho_\delta(\sqrt{2(1-t)})
\end{equation}

\noindent for brevity, where we used the fact that the Euclidean distance between two points $\xb,\yb\in\Sph^2$ is
\begin{equation}
0 \le \abs{\xb-\yb} = \sqrt{2(1-\xb\cdot\yb)} \le 2.
\end{equation}

\noindent Let us assume that our kernel satisfies $t\mapsto(1-t)\rho_\delta(\sqrt{2(1-t)})\in L^1([-1,1])$ so we can use the generalized Funk--Hecke 
formula \eqref{GeneralizedFunkHecke}---we will come back to this later.
If we could find eigenfunctions of the nonlocal operator, then it may be possible to compute eigenvalues as well.
For our nonlocal operator, we find
\begin{equation}
\begin{array}{ll}
\LL_\delta Y_\ell^m(\xb) & \dsp = \int_{\Sph^2} \tilde{\rho}_\delta(\xb\cdot\zb) \left[ Y_\ell^m(\zb) - Y_\ell^m(\xb)\right]\ud\Omega(\zb),\\[10pt]
& \dsp = \dfrac{2\ell+1}{4\pi}\int_{\Sph^2} \tilde{\rho}_\delta(\xb\cdot\zb)\int_{\Sph^2} Y_\ell^m(\yb) \left[P_\ell(\yb\cdot\zb) - P_\ell(\xb\cdot\yb)\right] \ud\Omega(\yb)\ud\Omega(\zb),\\[10pt]
& \dsp = \dfrac{2\ell+1}{4\pi}\int_{\Sph^2} Y_\ell^m(\yb) \int_{\Sph^2} \tilde{\rho}_\delta(\xb\cdot\zb) \left[ P_\ell(\yb\cdot\zb) - P_\ell(\xb\cdot\yb) \right] \ud\Omega(\zb)\ud\Omega(\yb),\\[10pt]
& \dsp = 2\pi\gamma_\ell\dfrac{2\ell+1}{4\pi}\int_{\Sph^2} Y_\ell^m(\yb)P_\ell(\xb\cdot\yb)\ud\Omega(\yb), \\[10pt]
& \dsp = 2\pi\gamma_\ell Y_\ell^m(\xb).
\end{array}
\end{equation}

\noindent Therefore the spherical harmonics $Y_\ell^m$ are eigenfunctions of the nonlocal Laplace--Beltrami operator \eqref{operator} with eigenvalues
\begin{equation}
\lambda_\delta(\ell) = 2\pi\gamma_\ell = 2\pi\int_{-1}^1\big[P_\ell(t)-1\big]\rho_\delta(\sqrt{2(1-t)})\ud t.
\end{equation}

\noindent In the following we will focus on the weakly singular kernel
\begin{equation}
\rho_\delta(\sqrt{2(1-t)}) 
= \dfrac{(1+\alpha)2^{1+\alpha}}{\pi\delta^{2+2\alpha}(1-t)^{1-\alpha}}\rchi_{[0,\delta]}(\sqrt{2(1-t)}),
\quad -1<\alpha<1,\; 0<\delta\leq2,
\label{kernel}
\end{equation}

\noindent where $\rchi_{[0,\delta]}(\cdot$ is the indicator function, which results in eigenvalues
\begin{equation}
\lambda_\delta(\ell) = 2\pi\int_d^1 \big[P_\ell(t)-1\big]\rho_\delta(\sqrt{2(1-t)}) \ud t, \quad d = 1-\delta^2/2.
\label{eigenvalues}
\end{equation}

\noindent Note that our kernel satisfies $t\mapsto(1-t)\rho_\delta(\sqrt{2(1-t)})\in L^1([-1,1])$, which justifies the use
of the generalized Funk--Hecke formula \eqref{GeneralizedFunkHecke} in the derivation of the eigenvalues.
The indicator function in $\eqref{kernel}$ is to impose the limit on the range of interactions, 
and the constants in $\eqref{kernel}$ ensure that for every $\alpha\in(-1,1)$, 
$\lambda_\delta\to\lambda_0$ and $\LL_\delta \rightarrow \LL_0$ strongly as $\delta\to0$ in a suitable operator norm.
This statement is proved in Appendix~\ref{appendix:strongconvergence} for an induced operator norm mapping between Sobolev spaces.
Note also that this type of kernel is standard for nonlocal models and is the analogue in spherical coordinates of the kernel used for Euclidean domains in \cite{du2017a}.
Finally, we expect that the spectral discretization of our nonlocal model \eqref{PDE} on the sphere shares the asymptotic compatibility~\cite{du2014}, 
demonstrated for flat geometries in~\cite{du2016}. Asymptotic compatibility means that the local limit can be preserved at the discrete level and the convergence is uniform with respect to $\delta$ as $\delta\to 0$.

\section{Numerical evaluation of the spectrum}

A key ingredient of a spectral method is a fast and accurate computation of the spectrum of the underlying 
operators~\cite{trefethen2000, canuto2007, shen2011}.
In this section we present our numerical method for computing the eigenvalues \eqref{eigenvalues}.

\subsection{Methodology}

We rescale the integral in \eqref{eigenvalues} by
\begin{equation}
t = \frac{1+d}{2} + \frac{1-d}{2}x,
\end{equation}

\noindent such that $t(x) : [-1,1] \to [d,1]$, which yields
\begin{equation}
\lambda_\delta(\ell) = \pi(1-d) \int_{-1}^1 \left[P_\ell\left(t(x)\right)-1\right]\rho_\delta(\sqrt{(1-d)(1-x)})\ud x.
\end{equation}

\noindent Now, $P_\ell$ is a degree-$\ell$ polynomial and
\begin{equation}
\rho_\delta(\sqrt{(1-d)(1-x)}) = \dfrac{(1+\alpha)2^{1+\alpha}}{\pi\delta^{2+2\alpha}\big[(\frac{1-d}{2})(1-x)\big]^{1-\alpha}}.
\end{equation}

\noindent Therefore,
\begin{equation}
\lambda_\delta(\ell) = \frac{(1+\alpha)2^{2+\alpha}}{\delta^{2+2\alpha}}\left(\frac{1-d}{2}\right)^\alpha \int_{-1}^1 \left[P_\ell\left(t(x)\right)-1\right](1-x)^{\alpha-1}\ud x.
\end{equation}

\noindent This integrand has an algebraic singularity of order $\alpha>-1$ at the right endpoint. To see this, we note that
\begin{equation}
\begin{array}{ll}
\dsp P_\ell\left(\frac{1+d}{2} + \frac{1-d}{2}x\right)-1 & \dsp \sim P_\ell(1)-1 - (1-x)\left.P_\ell'\left(\frac{1+d}{2} + \frac{1-d}{2}x\right)\right|_{x=1}\\[10pt]
& \dsp = -(1-x)\dfrac{(1-d)}{2}\dfrac{\ell(\ell+1)}{2},\quad{\rm as}\quad x\to1^-.
\end{array}
\end{equation}

\noindent Therefore, the final form to integrate is
\begin{equation}
\lambda_\delta(\ell) = \frac{(1+\alpha)2^{2-\alpha}}{\delta^2}\int_{-1}^1 \dfrac{P_\ell\left[1 -\frac{\delta^2}{2}\left(\frac{1-x}{2}\right)\right]-1}{1-x}(1-x)^\alpha\ud x.
\label{eigenvalues2}
\end{equation}

The hybrid Taylor/RK4 algorithm of Du and Yang~\cite{du2017a} requires an ordinary differential equation in terms of a continuously defined eigenvalue with respect to the index $\ell$. 
The difficulty of the spherical eigenvalue formulation is that the oscillations appear due to the degree $\ell$ of the Legendre polynomials, 
and derivatives with respect to $\ell$ unnecessarily complicate the matter.
However, the advantage of the spherical eigenvalue formulation is that, compared to the eigenvalue computation on the two-dimensional torus, 
there are only $n+1$ distinct eigenvalues to calculate for the $(n+1)^2$ spherical harmonics of degree $\le n$ due to the degeneracy of the spherical harmonics of a particular degree.

The algorithm we propose is to integrate using a modified \textit{Clenshaw--Curtis quadrature} rule. 
The computational complexity is $\OO(\ell^2 + \ell\log \ell) = \OO(\ell^2)$ per eigenvalue $\lambda_\delta(\ell)$, 
since the integrand requires the numerical evaluation of a degree-$\ell$ Legendre polynomial at $\ell+1$ points. 
However, when $\ell$ is sufficiently large, the integrand evaluation can be replaced by \textit{Szeg{\H o}'s asymptotic formula}~\cite{szego1934}, 
which reduces the complexity of pointwise evaluation to $\OO(1)$ and the overall complexity to $\OO(\ell + \ell\log \ell) = \OO(\ell\log \ell)$ per eigenvalue.

\paragraph{Clenshaw--Curtis quadrature}
Clenshaw--Curtis quadrature is a quadrature rule~\cite{sommariva2013, waldvogel2006} whose nodes are the $\ell+1$ Chebyshev--Lobatto points $x_k = \cos(k\pi/\ell)$. 
Given a continuous weight function $w(x)$ and $\mathbb{P}_\ell$, the space of algebraic polynomials of degree at most $\ell$, the quadrature weights $w_k$ are determined by
\begin{equation}
\int_{-1}^1 f(x)w(x){\rm\,d}x = \sum_{k=0}^\ell w_k f(x_k),\quad\forall f\in\mathbb{P}_\ell.
\end{equation}

\noindent With the modified Chebyshev moments of the weight function
\begin{equation}
\mu_k = \int_{-1}^1 T_k(x) w(x){\rm\,d}x,\quad 0\leq k\leq \ell,
\end{equation}

\noindent the quadrature weights can be determined via the formula
\begin{equation}\label{eq:CCweights}
w_j =
\dfrac{1-\tfrac{1}{2}(\delta_{0,j}+\delta_{\ell,j})}{\ell}\left[\displaystyle \mu_0 + (-1)^j\mu_\ell + 2\sum_{k=1}^{\ell-1} \mu_k\cos(\pi jk/\ell)\right],
\end{equation}

\noindent where $\delta_{k,j}$ is the Kronecker delta~\cite{abramowitz1965}. 
Due to this representation, the ${\cal O}(\ell\log \ell)$ computation of the weights from modified Chebyshev moments is achieved via a diagonally scaled discrete cosine transform.

For the Jacobi weight $w(x)=(1-x)^\alpha(1+x)^\beta$, where $\alpha,\beta>-1$ (which covers our case), the modified Chebyshev moments are known explicitly~\cite{piessens1987}
\begin{equation}
\mu_\ell^{(\alpha,\beta)} = \int_{-1}^1 T_\ell(x) (1-x)^\alpha(1+x)^\beta \ud x = 2^{\alpha+\beta+1}{\rm B}(\alpha+1,\beta+1)\,_3F_2\left( 
\begin{array}{l}
\ell,-\ell,\alpha +1\\
\frac{1}{2}, \alpha+\beta+2
\end{array};
1\right),
\end{equation}

\noindent where $\,_3F_2$ is a generalized hypergeometric function~\cite[\S 16.2.1]{olver2010} and ${\rm B}$ is the beta function~\cite[\S 5.12.1]{olver2010}. 
Using Sister Celine's technique~\cite[\S 127]{rainville1960} or induction~\cite{xiang2014}, a recurrence relation can be derived for the modified moments
\begin{align}
\mu_0^{(\alpha,\beta)} = 2^{\alpha+\beta+1}{\rm B}(\alpha+1,\beta+1),\quad \mu_1^{(\alpha,\beta)} = \dfrac{\beta-\alpha}{\alpha+\beta+2}\mu_0^{(\alpha,\beta)},&\\
(\alpha+\beta+\ell+2)\mu_{\ell+1}^{(\alpha,\beta)} + 2(\alpha-\beta) \mu_\ell^{(\alpha,\beta)} + (\alpha+\beta-\ell+2)\mu_{\ell-1}^{(\alpha,\beta)} = 0,&\quad{\rm for}\quad \ell>0.
\end{align}

Once we have computed the quadrature weights from the modified Chebyshev moments $\mu_\ell^{(\alpha,\beta)}$, all that remains is to evaluate Legendre polynomials at Chebyshev--Lobatto points, which can be done at linear cost per point using the three-term recurrence relation
\begin{equation}
(\ell+1)P_{\ell+1}(t) = (2\ell+1)tP_\ell(t) - \ell P_{\ell-1}(t),
\end{equation}

\noindent or at $\OO(1)$ cost when $\ell$ is large enough using Szeg{\H o}'s asymptotic formula.

\paragraph{Szeg{\H o}'s asymptotic formula}
Szeg{\H o}'s asymptotic formula~\cite{szego1934} for Legendre polynomials is uniformly convergent on $0\le \theta < 2(\sqrt{2}-1)\pi$, though special care must be taken at $\theta=0$.
It has been used by Bogaert~\cite{bogaert2014} to develop iteration-free computation of Gauss--Legendre quadrature nodes and weights. 
The formula is an infinite series in terms of cylindrical Bessel functions
\begin{equation}
P_\ell(\cos\theta) = \sqrt{\dfrac{\theta}{\sin\theta}}\sum_{\nu=0}^\infty \dfrac{a_\nu(\theta) J_\nu((\ell+\tfrac{1}{2})\theta)}{(\ell+\tfrac{1}{2})^\nu},
\end{equation}

\noindent where the first few coefficient functions $a_\nu(\theta)$ are given by
\begin{equation}
\begin{array}{ll}
a_0(\theta) & = 1,\\[10pt]
a_1(\theta) & \dsp = \dfrac{1}{8}\dfrac{\theta\cos\theta-\sin\theta}{\theta\sin\theta},\\[10pt]
a_2(\theta) & \dsp = \dfrac{1}{128}\dfrac{6\theta\sin\theta\cos\theta-15\sin^2\theta+\theta^2(9-\sin^2\theta)}{\theta^2\sin^2\theta},\\[10pt]
a_3(\theta) & \dsp = \dfrac{5}{1024}\dfrac{((\theta^3+21\theta)\sin^2\theta+15\theta^3)\cos\theta-((3\theta^2+63)\sin^2\theta-27\theta^2)\sin\theta}{\theta^3\sin^3\theta},
\end{array}
\end{equation}

\noindent and more can be computed by computer algebra systems.
In our numerical results, it is clear that even the first four terms provide sufficiently accurate pointwise evaluation for the numerical evaluation of the spectrum to high relative accuracy for reasonably low degrees. 
While it appears that each term in Szeg{\H o}'s asymptotic formula requires the numerical evaluation of an additional cylindrical Bessel function, the recurrence relation,
\begin{equation}
J_{\nu+1}(z) = \dfrac{2\nu}{z}J_\nu(z) - J_{\nu-1}(z),
\end{equation}

\noindent may be employed to reduce the number of evaluations down to two. Normally, the forward recurrence of cylindrical Bessel functions is ill-advised; however, for our purposes only two extra terms are used and their overall contribution is attenuated by the denominator, $(\ell+\tfrac{1}{2})^\nu$. Since $\frac{\pi}{2} < 2(\sqrt{2}-1)\pi < \pi$, for numerical evaluation on $[\frac{\pi}{2},\pi]$ we invoke the symmetry relation $P_\ell(\cos(\pi-\theta)) = (-1)^\ell P_\ell(\cos\theta)$.

\paragraph{Stable evaluation for $\theta\approx0$}
Due to the singular nature of the kernel, the numerical evaluation $P_\ell(\cos\theta) - 1$ for $\theta\approx0$ to high relative accuracy is essential. This can be achieved by a local expansion at $\theta = 0$ through the series representation
\begin{equation}
P_\ell(\cos\theta) = \sum_{k=0}^\ell (-1)^k\binom{\ell}{k}\binom{\ell+k}{k}\sin^{2k}\tfrac{\theta}{2}.
\end{equation}

\noindent Numerical evaluation of this series allows for the recovery of high relative accuracy for small angles, 
which is crucial for the accurate evaluation of the weakly singular integral. 
Furthermore, we may stably divide $P_\ell(\cos\theta)-1$ by $\sin^2\frac{\theta}{2}$ and the ratio tends to a constant as $\theta\to0^+$.

\subsection{Numerical experiments}

\begin{figure}[t]
\begin{center}
\begin{tabular}{cc}
\hspace*{-0.2cm}\includegraphics[width=0.53\textwidth]{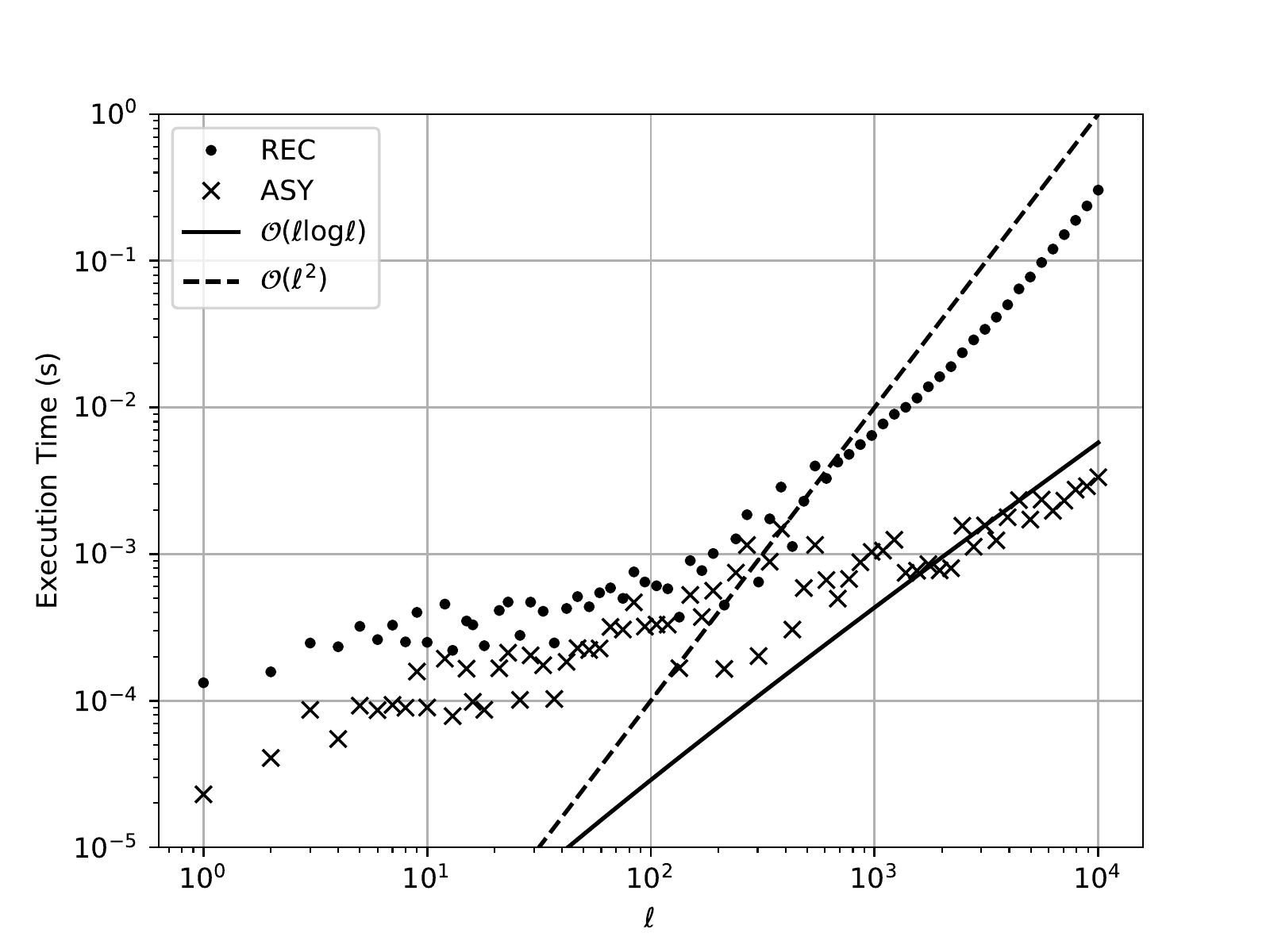}&
\hspace*{-0.65cm}\includegraphics[width=0.53\textwidth]{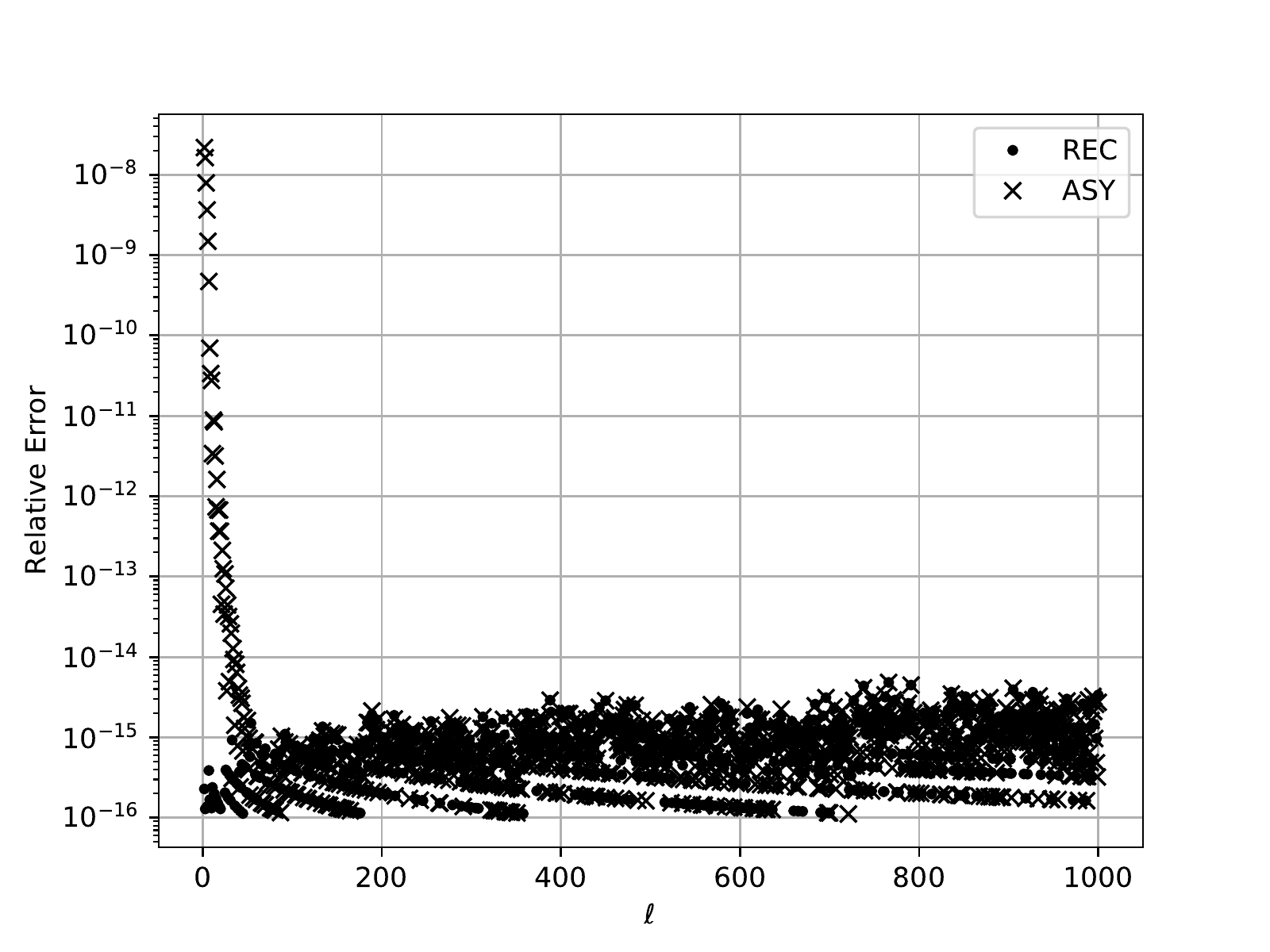}\\
\end{tabular}
\caption{Left: calculation times for the numerical evaluation of $\lambda_1(\ell)$ using the recurrence relation (REC) for the Legendre polynomials and the asymptotics (ASY). 
Right: approximation to the relative error in the numerical evaluation of $\lambda_1(\ell)$ in IEEE double precision floating-point arithmetic by comparison 
with numerical results in $256$-bit extended precision floating-point arithmetic. In both plots, $\alpha = -0.5$.}
\label{fig:spectrum_time_error}
\end{center}
\end{figure}

\begin{figure}[t]
\begin{center}
\begin{tabular}{cc}
\hspace*{-0.2cm}\includegraphics[width=0.53\textwidth]{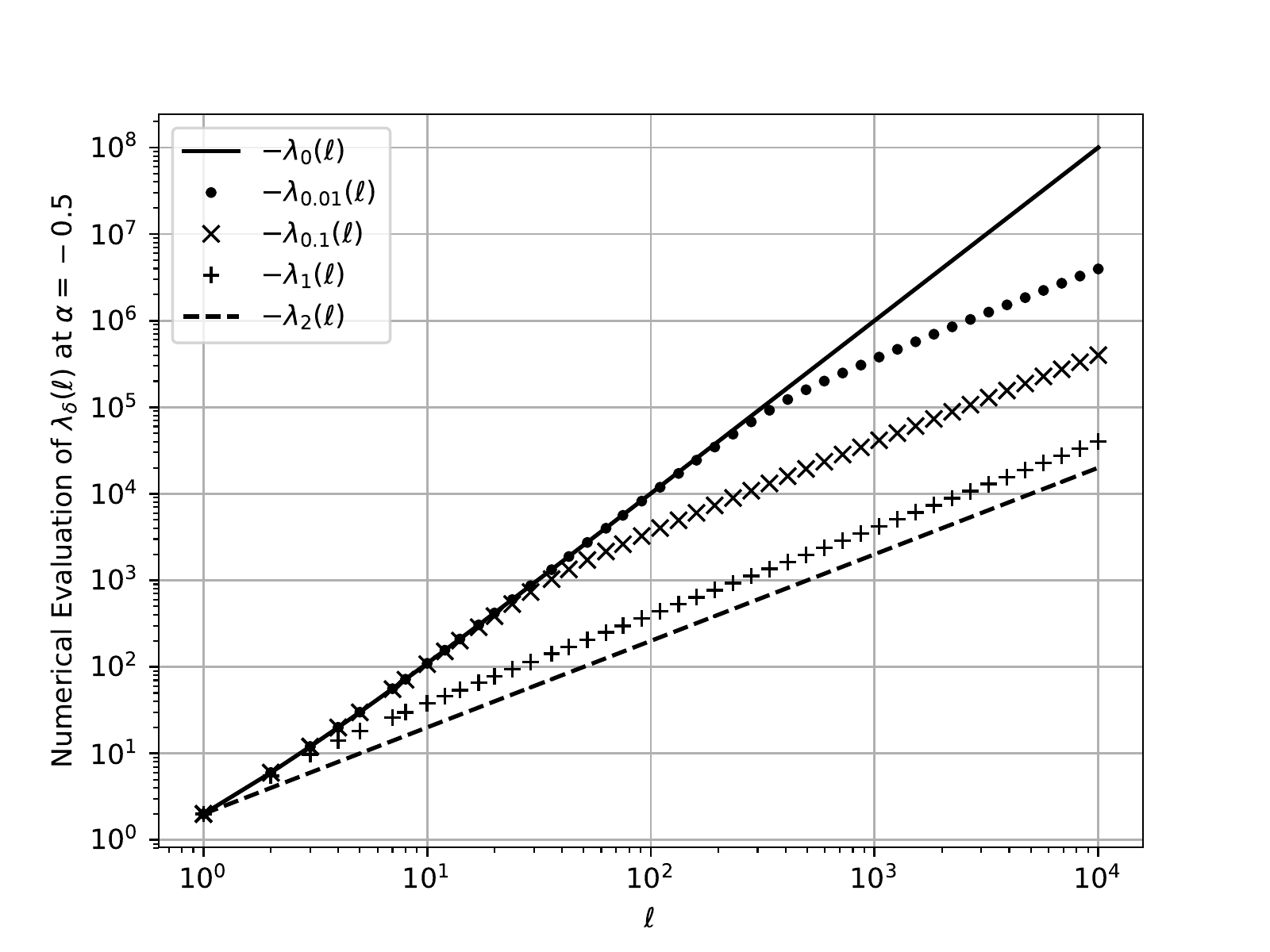}&
\hspace*{-0.65cm}\includegraphics[width=0.53\textwidth]{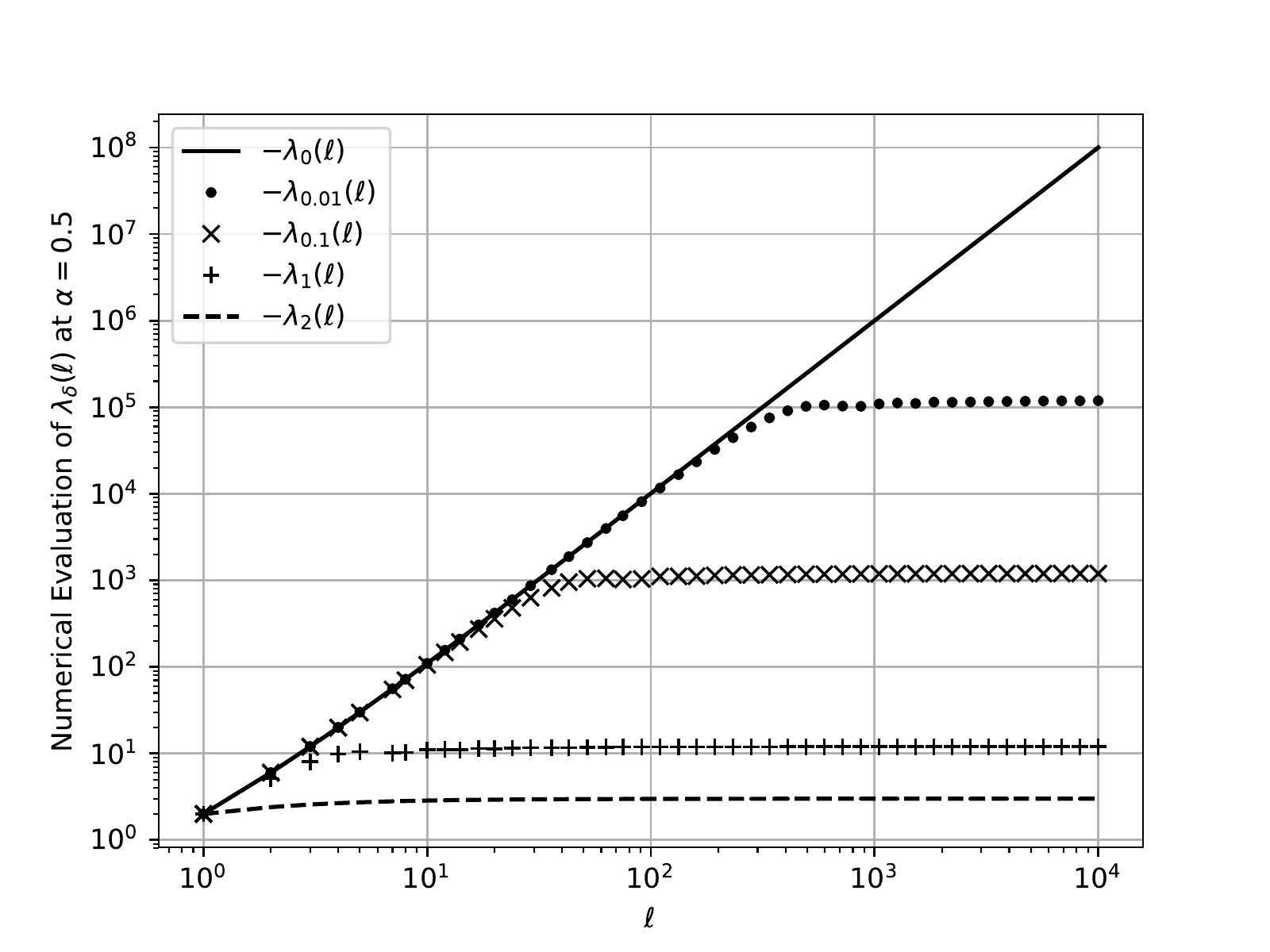}\\
\end{tabular}
\caption{Spectral data $\lambda_\delta(\ell)$ for the nonlocal operator $\LL_\delta$ with different values of the strength of the singularity 
$\alpha$ and the horizon $\delta$. Left: for $\alpha = -0.5$. Right: for $\alpha = 0.5$.}
\label{fig:plot_spectrum}
\end{center}
\end{figure}

We begin by illustrating calculation times and relative accuracy in the computation of the spectrum \eqref{eigenvalues2} by quadrature using the recurrence 
and asymptotic methods for $\alpha=-0.5$ and $\delta=1$.
For the accuracy, we compute the ``exact eigenvalues'' $\lambda_1(\ell)$ for $1\leq\ell\leq10^3$ using the recurrence in $256$-bit 
extended precision floating-point arithmetic.
We show in Figure \ref{fig:spectrum_time_error} the calculation times (left) and the relative accuracy (right).
As expected the recurrence has quadratic cost while the asymptotics have log-linear cost.
In terms of accuracy, the recurrence gives very good accuracy for all $\ell$ while the asymptotics are accurate only for $\ell>50$.

In the second experiment we compute the eigenvalues for different values of the strength of the singularity $\alpha$ and the horizon $\delta$.
We use the recurrence for $\ell\leq50$ and the asymptotics for $\ell>50$.
In Figure \ref{fig:plot_spectrum}, we plot (minus) the eigenvalues for $\alpha=\pm0.5$ and $\delta=0,\,0.01,\,0.1,\,1,$ and $2$.
This experiment demonstrates that nonlocal diffusion is asymptotically weaker than its local analogue: the larger $\alpha$ and $\delta$, the weaker.
This is reasonable because, for example, the average value of a function over a hemisphere has more inertia than the average value over an infinitesimally small region.
Note that for $\alpha=\pm0.5$ and $\delta=0$ we recover the eigenvalues $-\ell(\ell+1)$ of the local operator, while
for $\alpha=-0.5$ and $\delta=2$ we recover the eigenvalues $-2\ell$ of the nonlocal operator with integration over the entire sphere~\cite[Eq.~(3.74)]{atkinson2012}.

\section{Solving the nonlocal Poisson equation}

Before solving nonlocal time-dependent models in Section 5, we show how to solve the nonlocal Poisson equation 
with a mean condition for uniqueness,
\begin{equation}
\begin{array}{l}
\LL_\delta u(\theta,\varphi) = f(\theta,\varphi), \quad (\theta,\varphi)\in[0,\pi]\times[0,2\pi), \\\\
\dsp \int_{\Sph^2} u(\theta,\varphi)\ud\Omega = \dsp \int_{\Sph^2} f(\theta,\varphi)\ud\Omega.
\end{array}
\label{Poisson}
\end{equation}

\noindent We discretize colatitude and longitude with a uniform grid with $n+1$ points in the colatitudinal direction and $2n+1$ points in the longitudinal direction, 
and seek a solution $u(\theta,\varphi)$ of degree $n$ of the form
\begin{equation}
u(\theta,\varphi) = \sum_{\ell=0}^n \sum_{m=-\ell}^{+\ell} u_\ell^m Y_\ell^m(\theta,\varphi).
\end{equation}

\noindent The spherical harmonic coefficients $u_\ell^m$ populate a doubly triangular matrix but for computational purposes, we organize them into the array
\begin{equation}
U = \begin{pmatrix}
u_0^0 & u_1^{-1} & u_1^1 & u_2^{-2} & u_2^2 & \ldots & u_n^{-n} & u_n^n \\[3pt]
u_1^0 & u_2^{-1} & u_2^1 & u_3^{-2} & u_3^{2} & \ldots & 0 & 0 \\[3pt]
\vdots & \vdots & \vdots & \vdots & \vdots & \ddots & \vdots & \vdots \\[3pt]
u_{n-2}^0 & u_{n-1}^{-1} & u_{n-1}^1 & u_n^{-2} & u_n^{2} & \iddots & 0 & 0 \\[3pt]
u_{n-1}^0 & u_n^{-1} & u_n^1 & 0 & 0 & \ldots & 0 & 0 \\[3pt]
u_n^0 & 0 & 0 & 0 & 0 & \ldots & 0 & 0
\end{pmatrix}
\in\R^{(n+1)\times(2n+1)}.
\label{coefficients}
\end{equation}

\noindent The right-hand side $f$ is also expanded in spherical harmonics with coefficients $f_\ell^m$, which are stored in an array $F$.

Note that the mean of $u$ is given by
\begin{equation}
\int_{\Sph^2} u(\theta,\varphi)\ud\Omega
= u_0^0 Y_0^0
= \frac{u_0^0}{\sqrt{4\pi}}.
\label{mean}
\end{equation}

\noindent Therefore $u$ and $f$ have the same mean if and only if $u^0_0=f^0_0$.

\subsection{Nonlocal Laplace--Beltrami matrix}

\begin{figure}[t]
\centering
\includegraphics[scale=0.4]{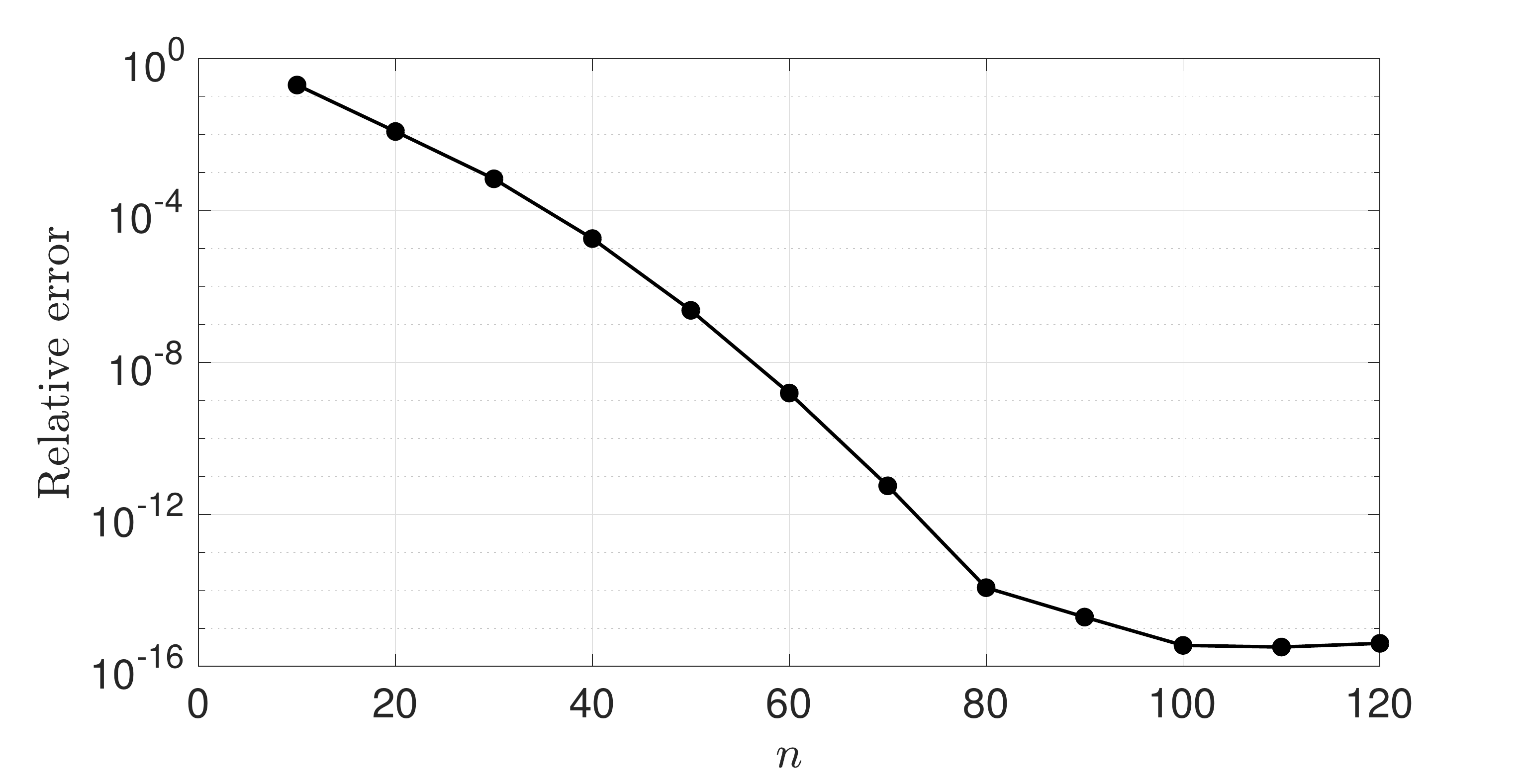}
\caption{Geometric decay of the $2$-norm relative error when solving the nonlocal Poisson equation $\eqref{Poisson}$
with right-hand side $\eqref{rhs}$, and parameters $\alpha=-0.5$ and $\delta=1.5$. 
One achieves machine accuracy for degrees $n\geq80$.}
\label{fig:nonlocal_poisson_cv}
\end{figure}

The Laplace--Beltrami operator $\LL_\delta$ acts diagonally on spherical harmonics so its discretization 
$L_\delta$ is a diagonal singular matrix ($\lambda_\delta(0)=0$), which we store as an $(n+1)\times(2n+1)$ matrix acting pointwise on the coefficients $U$, i.e.,
\begin{small}
\begin{equation}
L_\delta = \begin{pmatrix}
\lambda_\delta(0) & \lambda_\delta(1) & \lambda_\delta(1) & \lambda_\delta(2) & \lambda_\delta(2) & \ldots 
&\lambda_\delta(n) & \lambda_\delta(n) \\[3pt]
\lambda_\delta(1) & \lambda_\delta(2) & \lambda_\delta(2) &\lambda_\delta(3) & \lambda_\delta(3) & \ldots & 0 & 0 \\[3pt]
\vdots & \vdots & \vdots & \vdots & \vdots & \ddots & \vdots & \vdots \\[3pt]
\lambda_\delta(n-2) & \lambda_\delta(n-1)& \lambda_\delta(n-1) &\lambda_\delta(n) & \lambda_\delta(n) & \iddots & 0 & 0 \\[3pt]
\lambda_\delta(n-1) & \lambda_\delta(n) & \lambda_\delta(n) & 0 & 0 & \ldots & 0 & 0 \\[3pt]
\lambda_\delta(n) & 0 & 0 & 0 & 0 & \ldots & 0 & 0
\end{pmatrix}
\in\R^{(n+1)\times(2n+1)}.
\label{matrix}
\end{equation}
\end{small}

\noindent To make it nonsingular, we impose the mean condition by replacing $\lambda_\delta(0)$ by $1$, 
which gives $u_0^0 = f_0^0$.
We then solve 
\begin{equation}
L_\delta U = F 
\label{discretePoisson}
\end{equation}

\noindent by simply inverting all the nonzero entries of $L_\delta$ pointwise, that is, $U = L_\delta^{-1} F$ with 
\begin{small}
\begin{equation}
L_\delta^{-1} = \begin{pmatrix}
1 & \lambda_\delta^{-1}(1) & \lambda_\delta^{-1}(1) & \lambda_\delta^{-1}(2) & \lambda_\delta^{-1}(2) & \ldots 
&\lambda_\delta^{-1}(n) & \lambda_\delta^{-1}(n) \\[3pt]
\lambda_\delta^{-1}(1) & \lambda_\delta^{-1}(2) & \lambda_\delta^{-1}(2) &\lambda_\delta^{-1}(3) & \lambda_\delta^{-1}(3) & \ldots & 0 & 0 \\[3pt]
\vdots & \vdots & \vdots & \vdots & \vdots & \ddots & \vdots & \vdots \\[3pt]
\lambda_\delta^{-1}(n-2) & \lambda_\delta^{-1}(n-1)& \lambda_\delta^{-1}(n-1) &\lambda_\delta^{-1}(n) & \lambda_\delta^{-1}(n) & \iddots & 0 & 0 \\[3pt]
\lambda_\delta^{-1}(n-1) & \lambda_\delta^{-1}(n) & \lambda_\delta^{-1}(n) & 0 & 0 & \ldots & 0 & 0 \\[3pt]
\lambda_\delta^{-1}(n) & 0 & 0 & 0 & 0 & \ldots & 0 & 0
\end{pmatrix}.
\end{equation}
\end{small}

Note that our method could also be applied to the nonlocal Helmholtz equation $\LL_\delta u + c^2 u = 0$.
Therefore, implicit-explicit time-stepping schemes could also be used to solve time-dependent equations in Section 5.

\subsection{Numerical experiments}

\begin{figure}[t]
\begin{center}
\begin{tabular}{ccc}
Right-hand side\hspace*{1cm} & Nonlocal solution\hspace*{1cm} & Local solution\\
\includegraphics[width=0.24\textwidth]{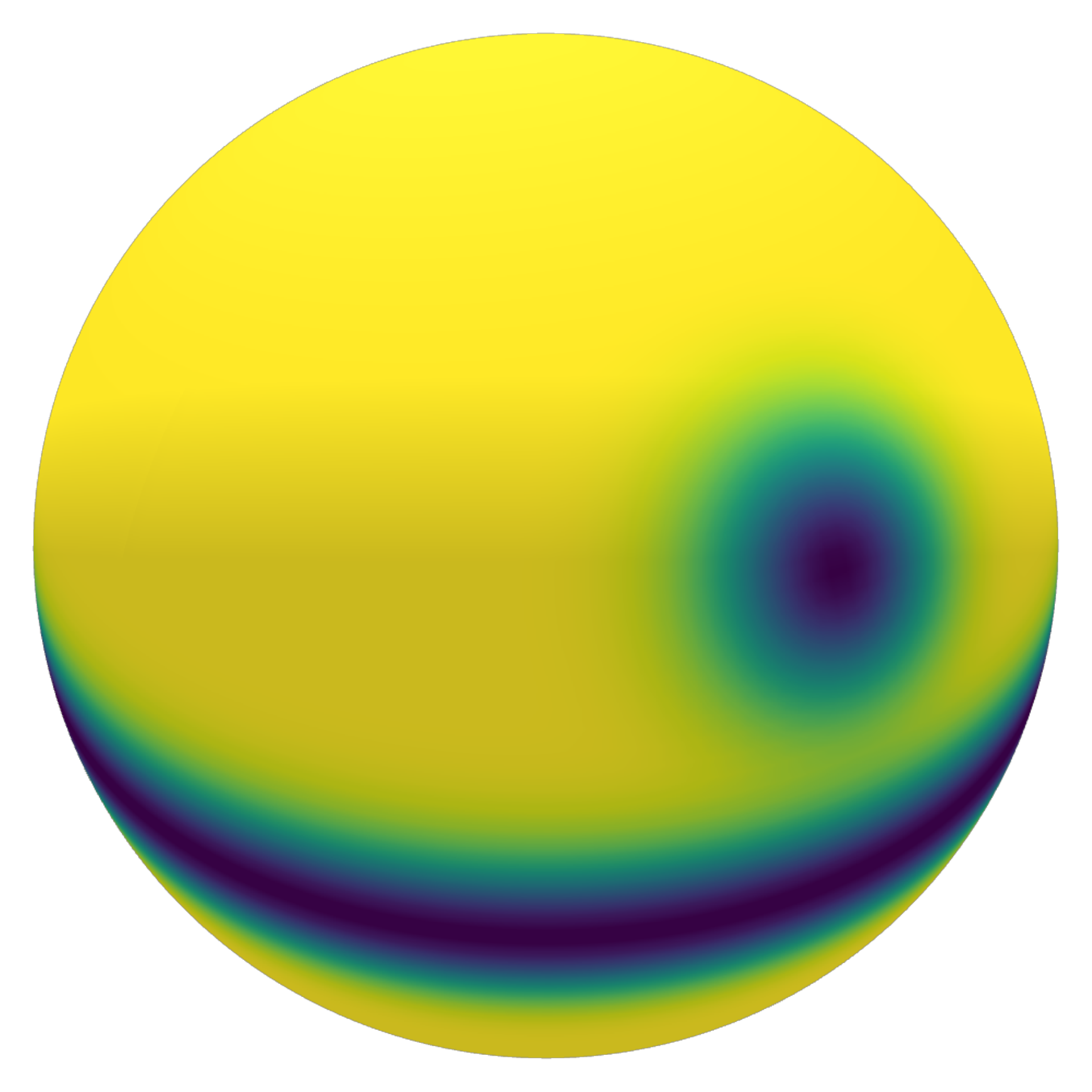}\hspace*{1cm}&
\includegraphics[width=0.24\textwidth]{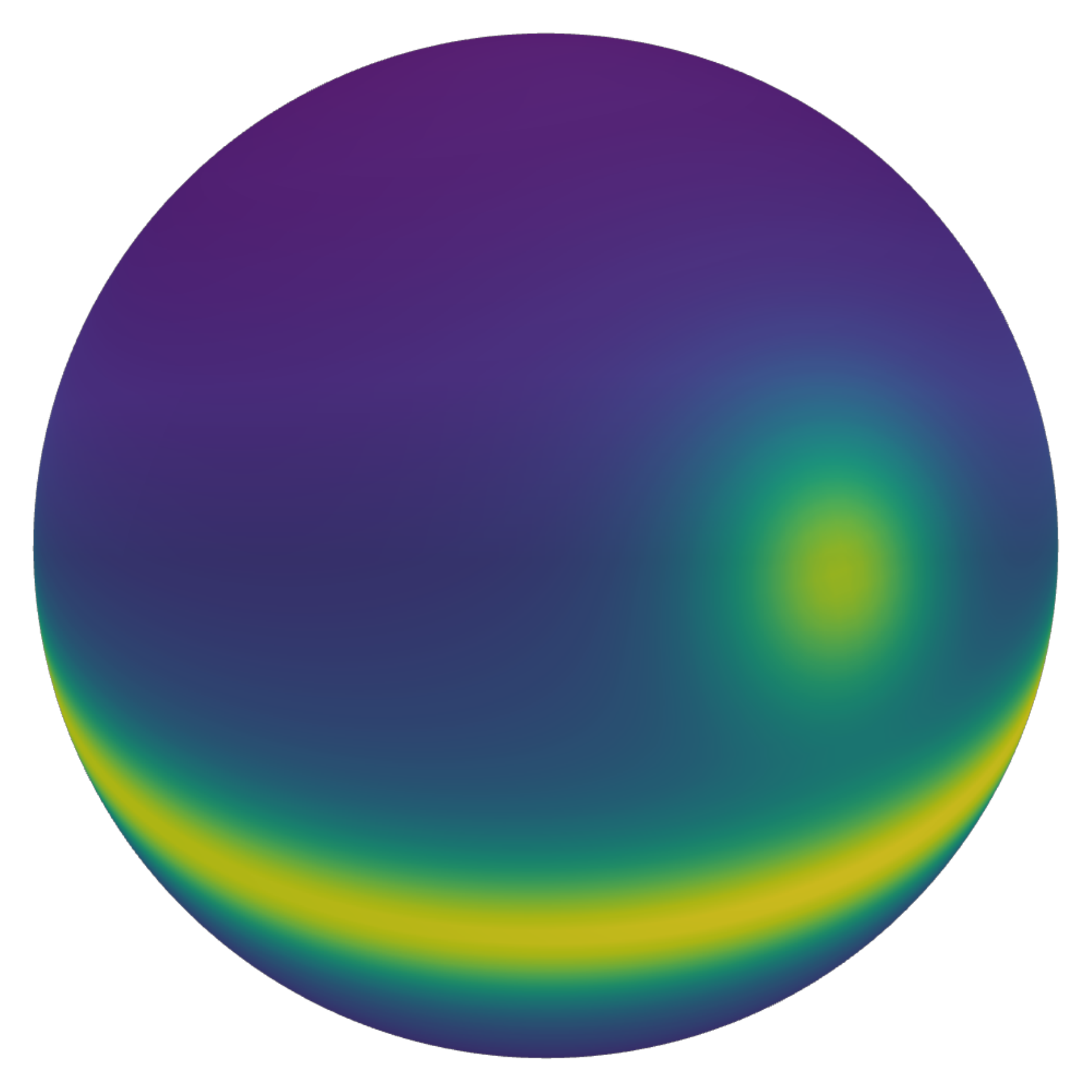}\hspace*{1cm}&
\includegraphics[width=0.24\textwidth]{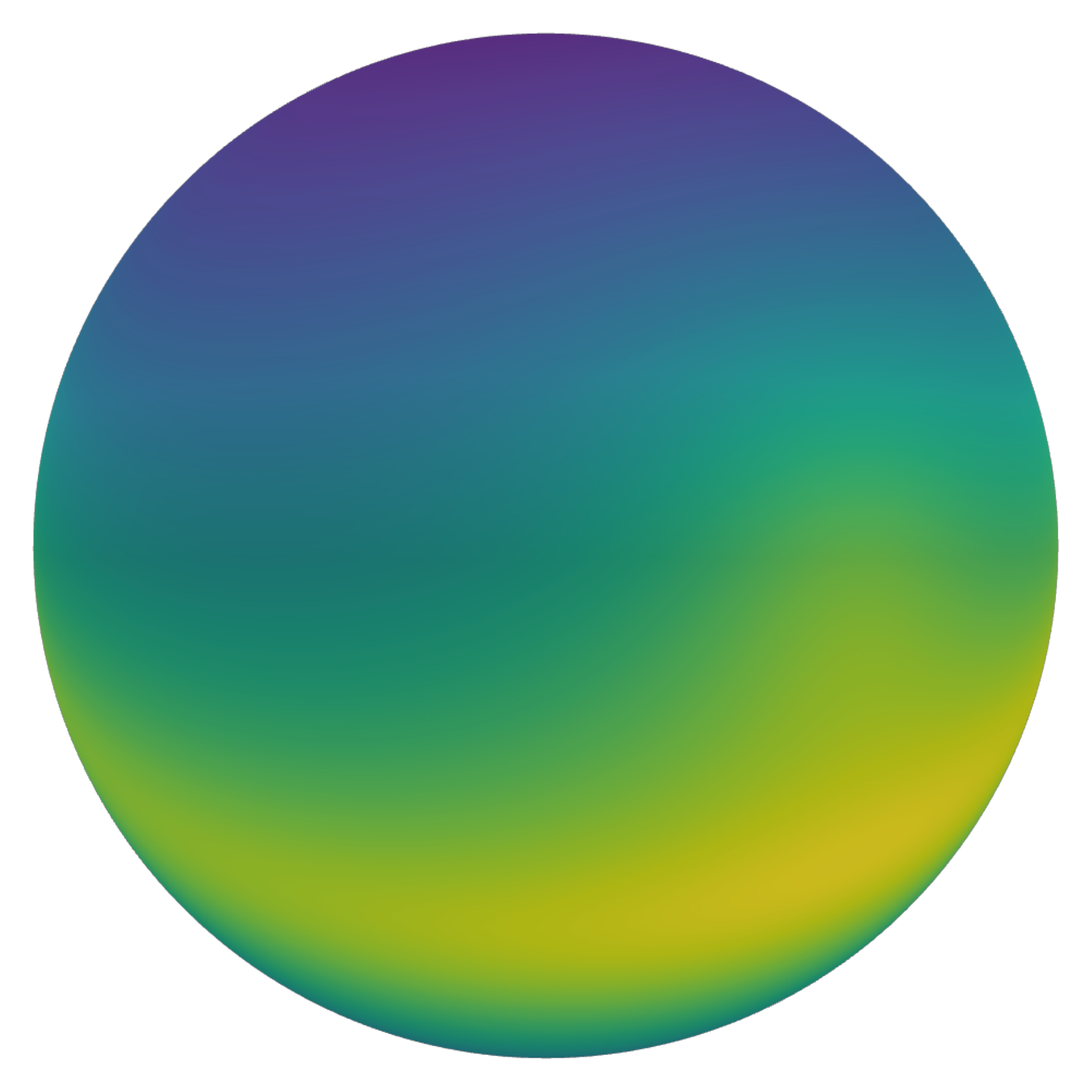}\\
\end{tabular}
\caption{``Death star'' right-hand side with nonlocal ($\alpha=0$, $\delta=1.5$) and local solutions, 
which clearly shows that nonlocal diffusion leads to sharper interfaces. The color is scaled to the extrema of the data.}
\label{fig:nonlocal_poisson_solution}
\end{center}
\end{figure}

We solve the nonlocal Poisson equation \eqref{Poisson} with a ``death star'' right-hand side
\begin{equation}
f(x, y, z) = -e^{-30((x-1/4)^2 + (y-\sqrt{11}/4)^2 + (z-1/4)^2)} - e^{-50z^2},
\label{rhs}
\end{equation}

\noindent and parameters $\alpha=0$ and $\delta=1.5$.
We compute the ``exact solution'' by numerically solving \eqref{discretePoisson} on a very fine grid.
We then compute numerical solutions for $n=10,20,30,\ldots,120$ and measure the $2$-norm relative error (measured on the coefficients) 
between numerical and exact solutions. 
We expect spectral convergence and this is what we observe in Figure \ref{fig:nonlocal_poisson_cv}.
The right-hand side together with the local and nonlocal solutions are shown in Figure \ref{fig:nonlocal_poisson_solution}.

\section{Solving nonlocal time-dependent equations}

For nonlinear nonlocal time-dependent equations of the form \eqref{PDE}, we seek solutions of the form
\begin{equation}
u(t, \theta,\varphi) = \sum_{\ell=0}^n \sum_{m=-\ell}^{+\ell} u_\ell^m(t) Y_\ell^m(\theta,\varphi).
\end{equation}

\noindent We obtain a coupled system of nonlinear {\em ordinary} differential equations,
\begin{equation}
U_t = \epsilon^2 L_\delta U + N(U), \quad U(0) = U_0,
\label{ODEs}
\end{equation}

\noindent where $U(t)$ represents the $(n+1)\times(2n+1)$ matrix of spherical harmonic coefficients $u_\ell^m(t)$, 
and the nonlinearity $N(U)$ is evaluated on the grid using a fast spherical harmonic transform.

\subsection{Fast spherical harmonic transforms}

\begin{figure}[t]
\begin{center}
\begin{tikzpicture}[scale=0.657]
\draw[black, thick]
    (0,0) -- (6,0)
    (1,1) -- (6,1)
    (2,2) -- (6,2)
    (3,3) -- (6,3)
    (5,5) -- (6,5)
    ;
\filldraw[black]
    (0,0) circle (2pt)
    (6,0) circle (2pt)
    (1,1) circle (2pt)
    (6,1) circle (2pt)
    (2,2) circle (2pt)
    (6,2) circle (2pt)
    (3,3) circle (2pt)
    (6,3) circle (2pt)
    (5,5) circle (2pt)
    (6,5) circle (2pt)
    (6,6) circle (2pt)
    ;
\node (ellipsis1) at (4.5,4) {$\iddots$};
\node (ellipsis2) at (5.5,4) {$\vdots$};
\node[anchor=west] (zero) at (6.25,0) {$\tilde{P}_\ell^0$};
\node[anchor=west] (one) at (6.25,1) {$\tilde{P}_\ell^1$};
\node[anchor=west] (two) at (6.25,2) {$\tilde{P}_\ell^2$};
\node[anchor=west] (three) at (6.25,3) {$\tilde{P}_\ell^3$};
\node[anchor=west] (penultimate) at (6.25,5) {$\tilde{P}_\ell^{\ell-1}$};
\node[anchor=west] (ultimate) at (6.25,6) {$\tilde{P}_\ell^\ell$};
\draw[black, thick]
    (9,0) -- (15,0)
    (10,1) -- (15,1)
    (9,2) -- (15,2)
    (10,3) -- (15,3)
    (9,5) -- (15,5)
    (10,6) -- (15,6)
    ;
\filldraw[black]
    (9,0) circle (2pt)
    (15,0) circle (2pt)
    (10,1) circle (2pt)
    (15,1) circle (2pt)
    (9,2) circle (2pt)
    (15,2) circle (2pt)
    (10,3) circle (2pt)
    (15,3) circle (2pt)
    (9,5) circle (2pt)
    (15,5) circle (2pt)
    (10,6) circle (2pt)
    (15,6) circle (2pt)
    ;
\node (ellipsis1) at (11,4) {$\vdots$};
\node (ellipsis2) at (14,4) {$\vdots$};
\node[anchor=west] (zero) at (15.25,0) {$\tilde{P}_\ell^0$};
\node[anchor=west] (one) at (15.25,1) {$\tilde{P}_\ell^1$};
\node[anchor=west] (two) at (15.25,2) {$\tilde{P}_\ell^0$};
\node[anchor=west] (three) at (15.25,3) {$\tilde{P}_\ell^1$};
\node[anchor=west] (penultimate) at (15.25,5) {$\tilde{P}_\ell^0$};
\node[anchor=west] (ultimate) at (15.25,6) {$\tilde{P}_\ell^1$};
\draw[black, thick]
    (18,0) -- (24,0)
    (18,1) -- (23,1)
    (18,2) -- (24,2)
    (18,3) -- (23,3)
    (18,5) -- (24,5)
    (18,6) -- (23,6)
    ;
\filldraw[black]
    (18,0) circle (2pt)
    (24,0) circle (2pt)
    (18,1) circle (2pt)
    (23,1) circle (2pt)
    (18,2) circle (2pt)
    (24,2) circle (2pt)
    (18,3) circle (2pt)
    (23,3) circle (2pt)
    (18,5) circle (2pt)
    (24,5) circle (2pt)
    (18,6) circle (2pt)
    (23,6) circle (2pt)
    ;
\node (ellipsis1) at (19,4) {$\vdots$};
\node (ellipsis2) at (22,4) {$\vdots$};
\node[anchor=west] (zero) at (24.25,0) {$T_\ell$};
\node[anchor=west] (one) at (23.25,1) {$\sin\theta U_\ell$};
\node[anchor=west] (two) at (24.25,2) {$T_\ell$};
\node[anchor=west] (three) at (23.25,3) {$\sin\theta U_\ell$};
\node[anchor=west] (penultimate) at (24.25,5) {$T_\ell$};
\node[anchor=west] (ultimate) at (23.25,6) {$\sin\theta U_\ell$};
\node (firstarrow) at (8.1,3) {$\Longrightarrow$};
\node (secondarrow) at (17.1,3) {$\Longrightarrow$};
\end{tikzpicture}
\caption{The spherical harmonic transform proceeds in two steps.
Firstly, normalized associated Legendre functions are converted to normalized associated Legendre functions of order zero and one. 
Then, these intermediate expressions are re-expanded in trigonometric form. 
We use the notation $T_\ell(\cos\theta) = \cos(\ell\theta)$ and $\sin\theta U_\ell(\cos\theta) = \sin((\ell+1)\theta)$.}
\label{fig:SHT}
\end{center}
\end{figure}
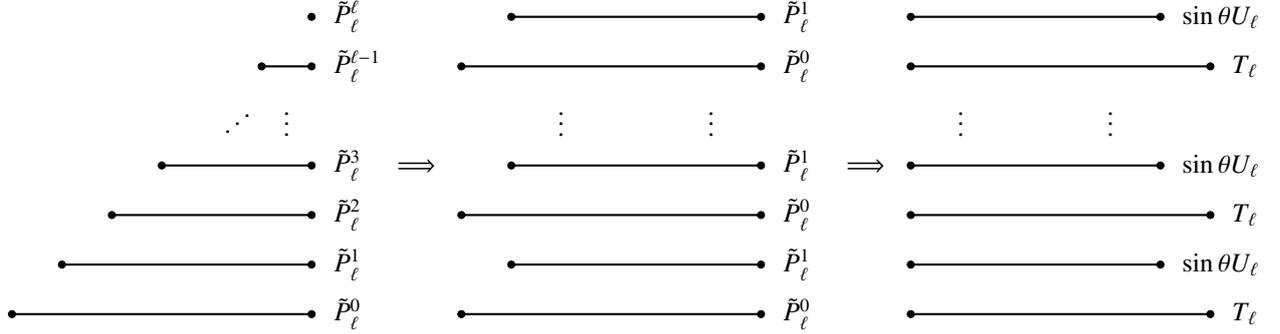

There are many algorithms available to accelerate synthesis and analysis on the sphere. 
Our particular choice is the one described by Slevinsky in~\cite{slevinsky2017a} due to the backward stability that is important for partial differential equations of evolution. 
We refer the interested reader to~\cite{slevinsky2017a} for a complete analysis and description of the implementation of the fast spherical harmonic transform.

The steps required by the spherical harmonic connection problem are illustrated in Figure~\ref{fig:SHT}. 
At first, the butterfly algorithm converts higher-order layers of the spherical harmonics into expansions with orders zero and one. 
Then, these coefficients are rapidly transformed into their Fourier coefficients by the Fast Multipole Method.
Total pre-computation requires at best $\mathcal{O}(n^3\log n)$ flops; and, the asymptotically optimal execution time of $\mathcal{O}(n^2\log^2 n)$ is rigorously proved 
via connection to Fourier integral operators, though the asymptotically optimal scaling is anticipated to set it for bandlimits beyond $n\ge\OO(20,000)$. 
Once a spherical harmonic expansion is converted to a bivariate Fourier series, FFTs are able to synthesize function samples at equispaced points-in-angle.

\subsection{High-order time-stepping with exponential integrators}

Time is discretized with a uniform time-step $h$ and the problem is to find the spherical harmonic coefficients $U^{k+1}$ of $u$ at $t_{k+1}=(k+1)h$
from the coefficients $U^{k}$ at $t_{k}=kh$.
Since the linear part $L_\delta$ in \eqref{ODEs} is diagonal, exponential integrators are particularly efficient as the computation of the matrix exponential is equivalent to pointwise exponentiation of the spectrum.
For diagonal problems, Montanelli and Bootland recently demonstrated~\cite{montanelli2017a} that one of the most
effective choices is the 
ETDRK4 scheme of Cox and Matthews~\cite{cox2002}. 
The formula for this scheme is:
\begin{equation}
\begin{array}{l}
A^k = e^{\frac{h}{2}L_\delta}U^k + L_\delta^{-1}(e^{hL_\delta} - I)N(U^k), \\[10pt]
B^k = e^{\frac{h}{2}L_\delta}U^k + L_\delta^{-1}(e^{hL_\delta} - I)N(A^k), \\[10pt]
C^k = e^{\frac{h}{2}L_\delta}A^k + L_\delta^{-1}(e^{hL_\delta} - I)\big[2N(B^k) - N(U^k)\big], \\[10pt]
U^{k+1} = e^{hL_\delta}U^k + f_1(hL_\delta)N(U^k) + 2f_2(hL_\delta)\big[N(A^k) + N(B^k)\big] + f_3(hL_\delta)N(C^k),
\end{array}
\label{ETDRK4}
\end{equation}

\noindent where the coefficients $f_1$, $f_2$ and $f_3$ are
\begin{equation}
\begin{array}{l}
f_1(hL_\delta) = h^{-2}L_\delta^{-3}[-4I - hL_\delta + e^{hL_\delta}(4 - 3hL_\delta + (hL_\delta)^2)], \\[10pt]
f_2(hL_\delta) = h^{-2}L_\delta^{-3}[2I + hL_\delta + e^{hL_\delta}(-2I + hL_\delta)], \\[10pt]
f_3(hL_\delta) = h^{-2}L_\delta^{-3}[-4I - 3hL_\delta - (hL_\delta)^2 + e^{hL_\delta}(4I - hL_\delta)].
\end{array}
\end{equation}

\noindent The stable evaluation of the coefficients may be performed using the contour integral method suggested by 
Kassam and Trefethen~\cite{kassam2005}. When working with an eigenfunction expansion, such as the spherical harmonic expansions, it is even more efficient to Taylor expand $f_1$, $f_2$ 
and $f_3$ for small argument to recover pointwise evaluation to high relative accuracy. Stability properties of the ETDRK4 scheme have been studied by Du and Zhu in \cite{du2005}. 
Note that due to our choice of spherical harmonics for the basis, there is no severe time-step restriction because there are no spurious 
eigenvalues from discretizing the nonlocal Laplace--Beltrami operator.\footnote{In \cite{montanelli2017b}, Montanelli and Nakatsukasa discretized the local 
Laplace--Beltrami operator using the DFS method. The eigenvalues of their Laplace--Beltrami matrix are all real and nonpositive;
some of them are spectrally accurate approximations to the eigenvalues $-\ell(\ell+1)$, but some others, the so-called \textit{outliers}, are of order $\OO(n^4)$.}

We note that for equations like \eqref{PDE}, one may include linear stabilizing terms to $L_\delta$ (that are simultaneously
diagonalizable with simple spectrum computation) to help improve the stability. For \eqref{PDE} with special structures, one 
may also use high-order energy-preserving ETDRK schemes for gradient flows and maximum-principle-preserving ETDRK schemes.

\subsection{Post-processing data with Ces\`aro summation}

Depending on the relative strength of the diffusive term $\epsilon^2\LL_\delta$ in, e.g., the nonlocal Allen--Cahn equation, the steady-state solution may be discontinuous. 
This is in contrast to, e.g., the local Allen--Cahn equation that tends to a steady-state consisting of the single constant $\pm1$ on the entire sphere.

As in the familiar Fourier case (see, e.g., \cite[Th.~9.3~of~Chap.~2]{zygmund1959}), the partial sums
\begin{equation}
S_n f(\theta,\varphi) = \sum_{\ell=0}^n \sum_{m=-\ell}^{+\ell} f_\ell^m Y_\ell^m(\theta,\varphi),
\end{equation}

\noindent show the Gibbs phenomenon at points of discontinuity, first described on the sphere by Weyl~\cite{weyl1910}.
The remedy in the Fourier case is to consider instead the arithmetic means of successive partial sums, which do not show the Gibbs phenomenon~\cite[Th.~3.4 of Chap.~3]{zygmund1959}.
To avoid the Gibbs phenomenon on the sphere, one has to consider Ces\`aro means of higher order (arithmetic means are Ces\`aro $(C,1)$ means).
In fact, Dai and Xu show that the $(C,\kappa)$ means of the spherical harmonic series defined by
\begin{equation}
S_n^\kappa f = \dfrac{1}{A_n^\kappa}\sum_{\ell=0}^n A_{n-\ell}^\kappa \sum_{m=-\ell}^{+\ell} f_\ell^m Y_\ell^m(\theta,\varphi),
\quad A_\ell^\kappa = \binom{\ell+\kappa}{\ell} = \dfrac{(\kappa+\ell)(\kappa+\ell-1)\cdots(\kappa+1)}{\ell!},
\end{equation}

\noindent completely remove the overshoot of the Gibbs phenomenon when $\kappa\geq2$ \cite[Th.~2.4.3]{dai2013}.
Therefore, we post-process our numerical solutions by rescaling the spherical harmonic coefficients using $(C, 2)$ means.\footnote{Note that Gelb proposed
a method for removing the Gibbs phenomenon for spherical harmonics in \cite{gelb1997}. However, her method is limited to the case where the position of the 
discontinuity is known.}

\subsection{Numerical experiments}

\paragraph{Allen--Cahn equation}

The Allen--Cahn equation, derived by Allen and Cahn in the 1970s, is a reaction-diffusion equation which
describes the process of phase separation in iron alloys~\cite{allen1979}. It was studied in the ball and on the sphere in~\cite{du2008}.
Our nonlocal version on the sphere is
\begin{equation}
u_t = \epsilon^2\LL_\delta u + u - u^3, 
\label{eq:NAC}
\end{equation}

\noindent with nonlocal diffusion $\epsilon^2\LL_\delta u$ and cubic reaction $u-u^3$.
The solution $u$ is the order parameter, a correlation function related to the positions of the different components of the alloy.
In our experiments, we take $\epsilon=0.1$, $\alpha = -0.5$, $\delta = 1.0$, and the initial condition
\begin{equation}
u(t=0,x,y,z) = \cos(10xy).
\label{eq:NACIC}
\end{equation}

\begin{figure}[t]
\begin{center}
\begin{tabular}{cc}
\hspace*{-0.2cm}\includegraphics[width=0.53\textwidth]{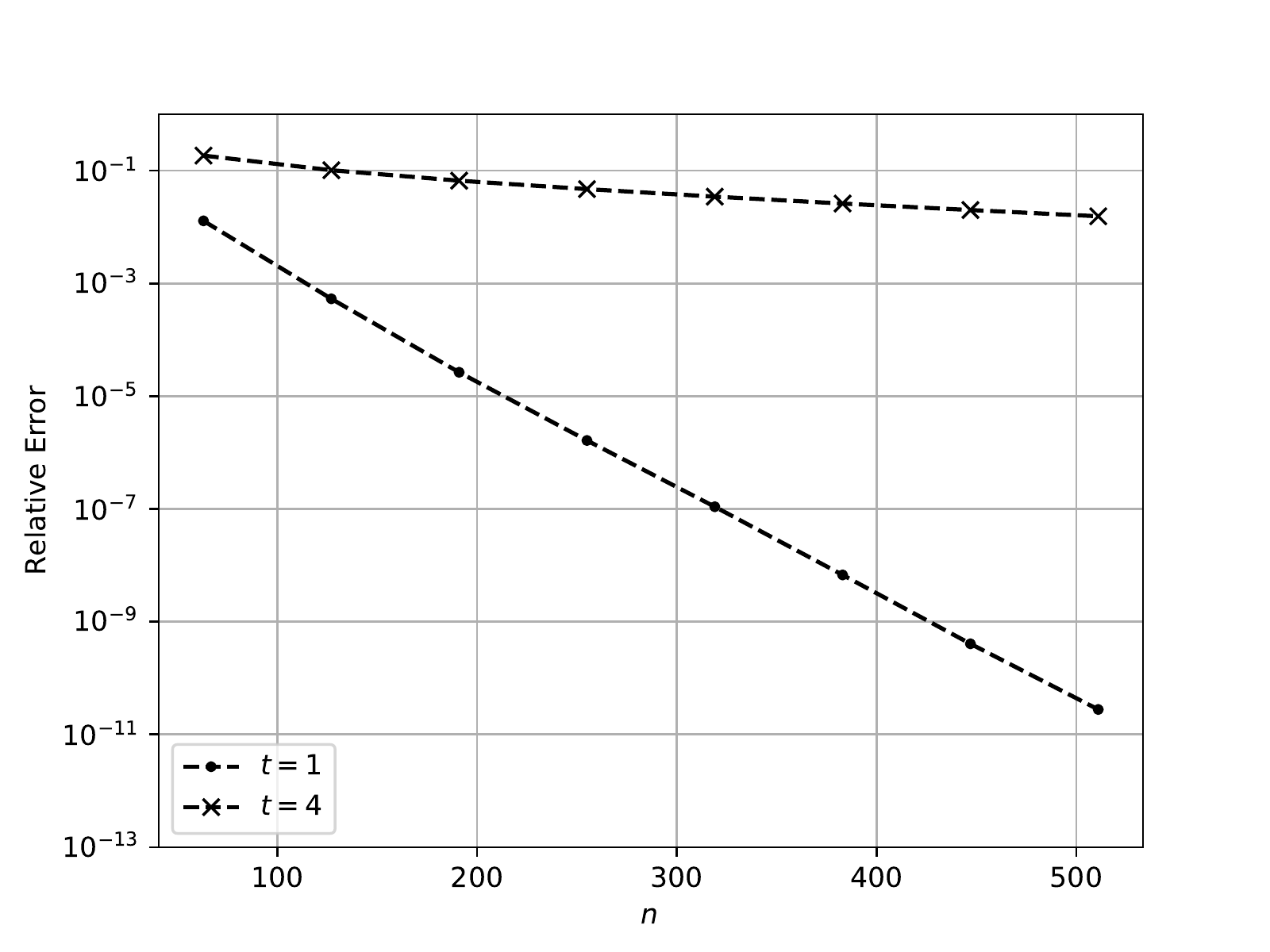}&
\hspace*{-0.65cm}\includegraphics[width=0.53\textwidth]{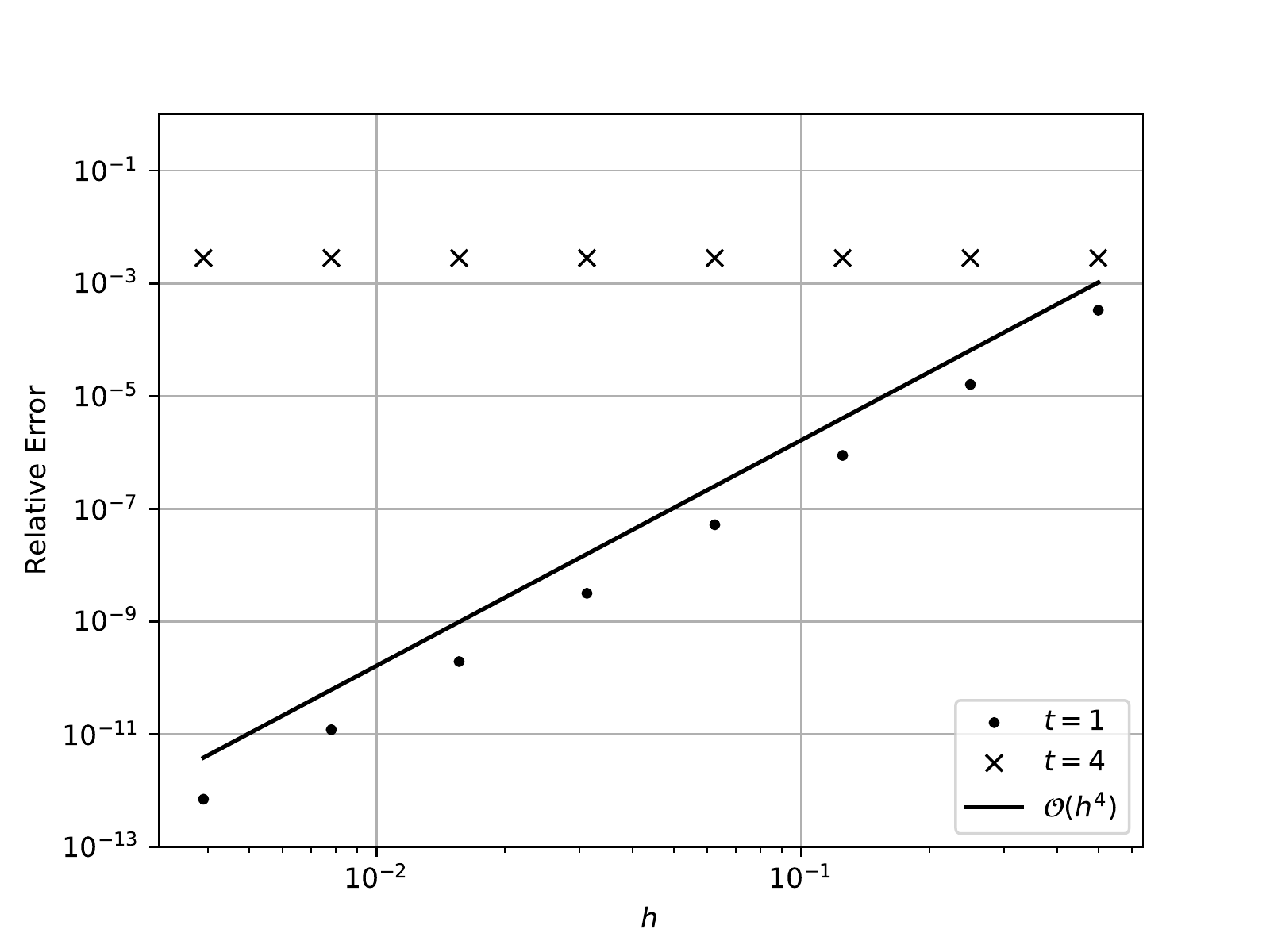}\\
\end{tabular}
\caption{Left: estimates of the spatial error demonstrating spectral convergence. Right: estimates of the temporal error demonstrating fourth-order convergence. In both plots, results show the $2$-norm relative error between the computed solution and a computed solution with a finer step size and a larger spherical harmonic degree.}
\label{fig:NAC_error}
\end{center}
\end{figure}

\begin{figure}[t]
\begin{center}
\begin{tabular}{ccccc}
$t=0$ & $t=1$ & $t=4$ & $t=16$ & $t=64$\\
\includegraphics[width=0.17\textwidth]{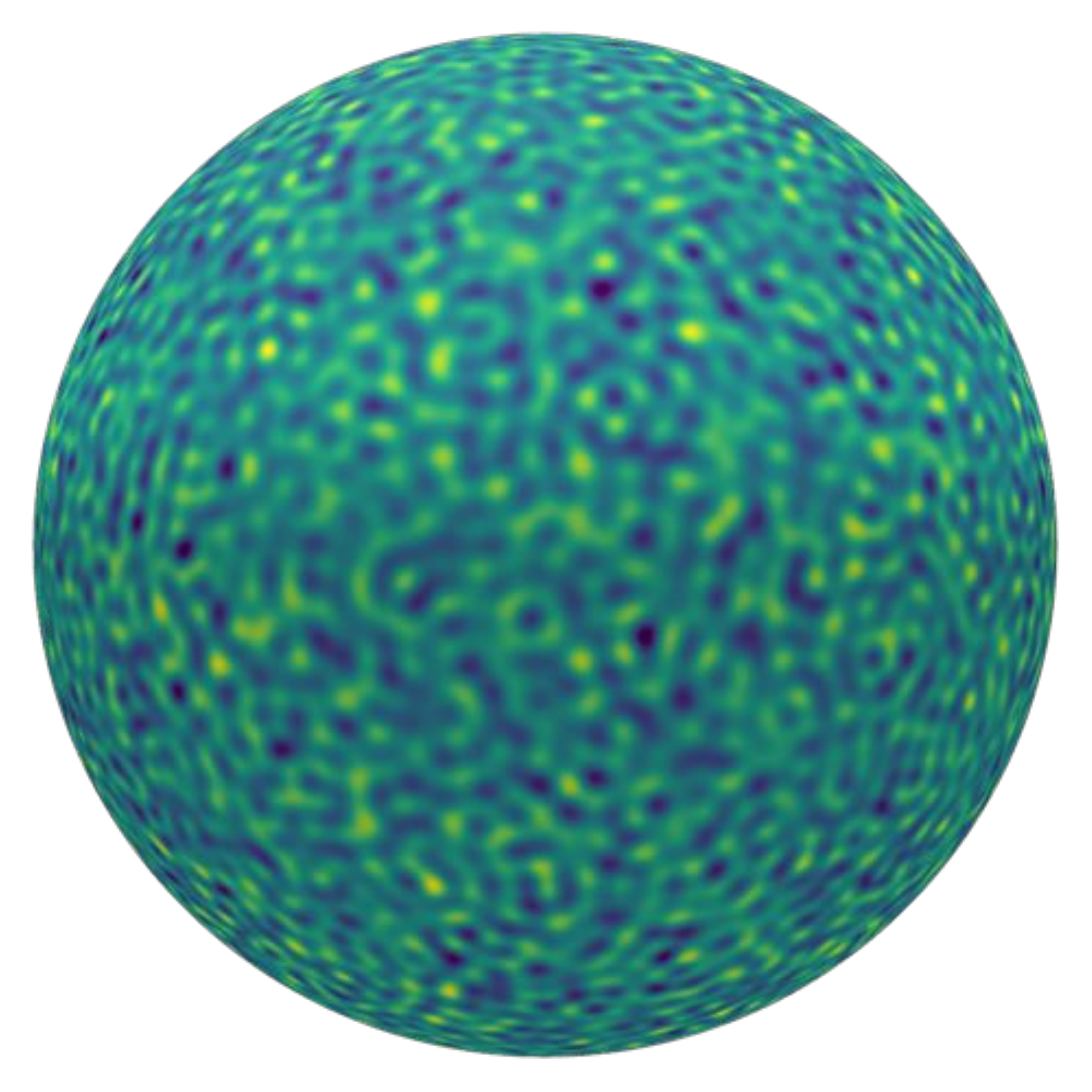}&
\includegraphics[width=0.17\textwidth]{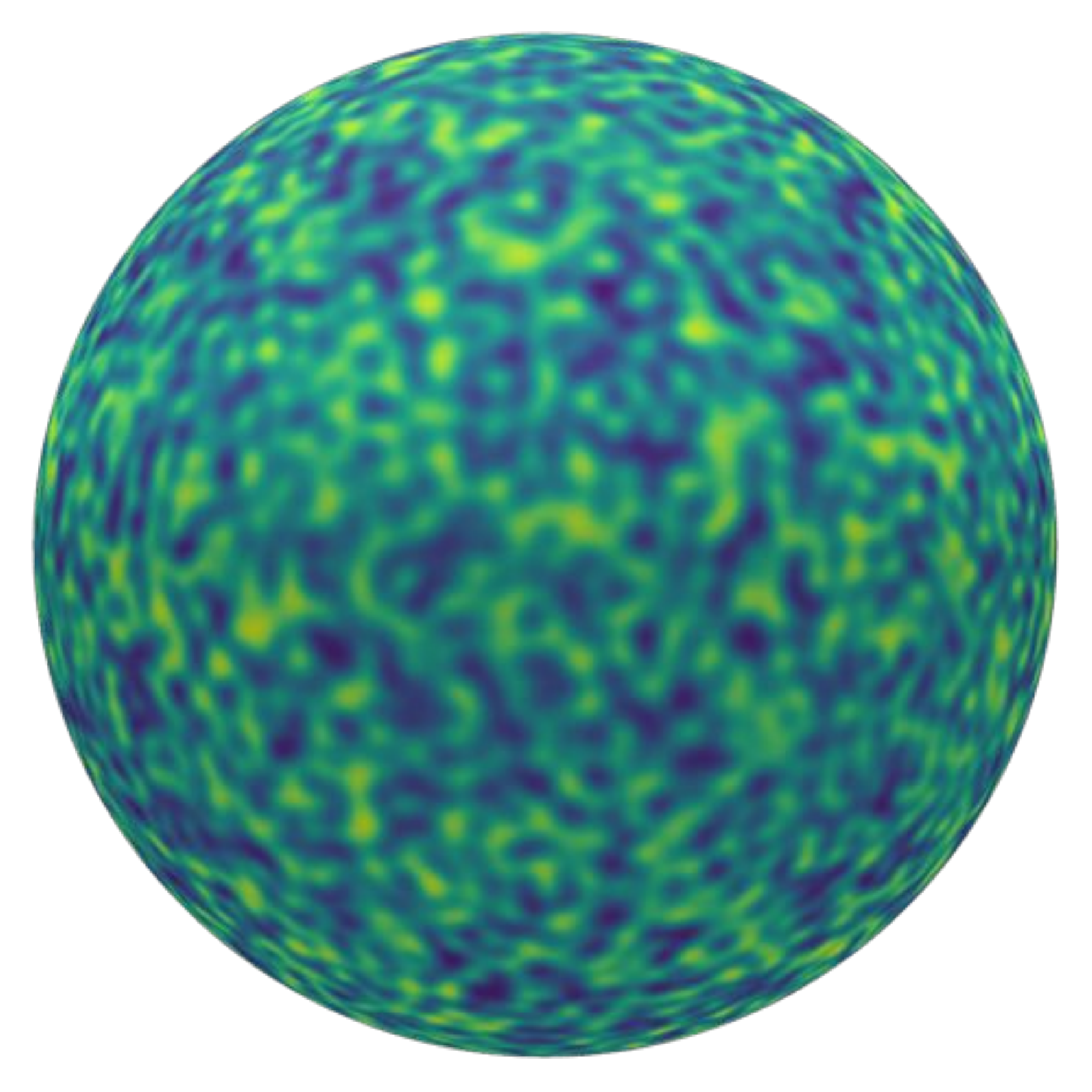}&
\includegraphics[width=0.17\textwidth]{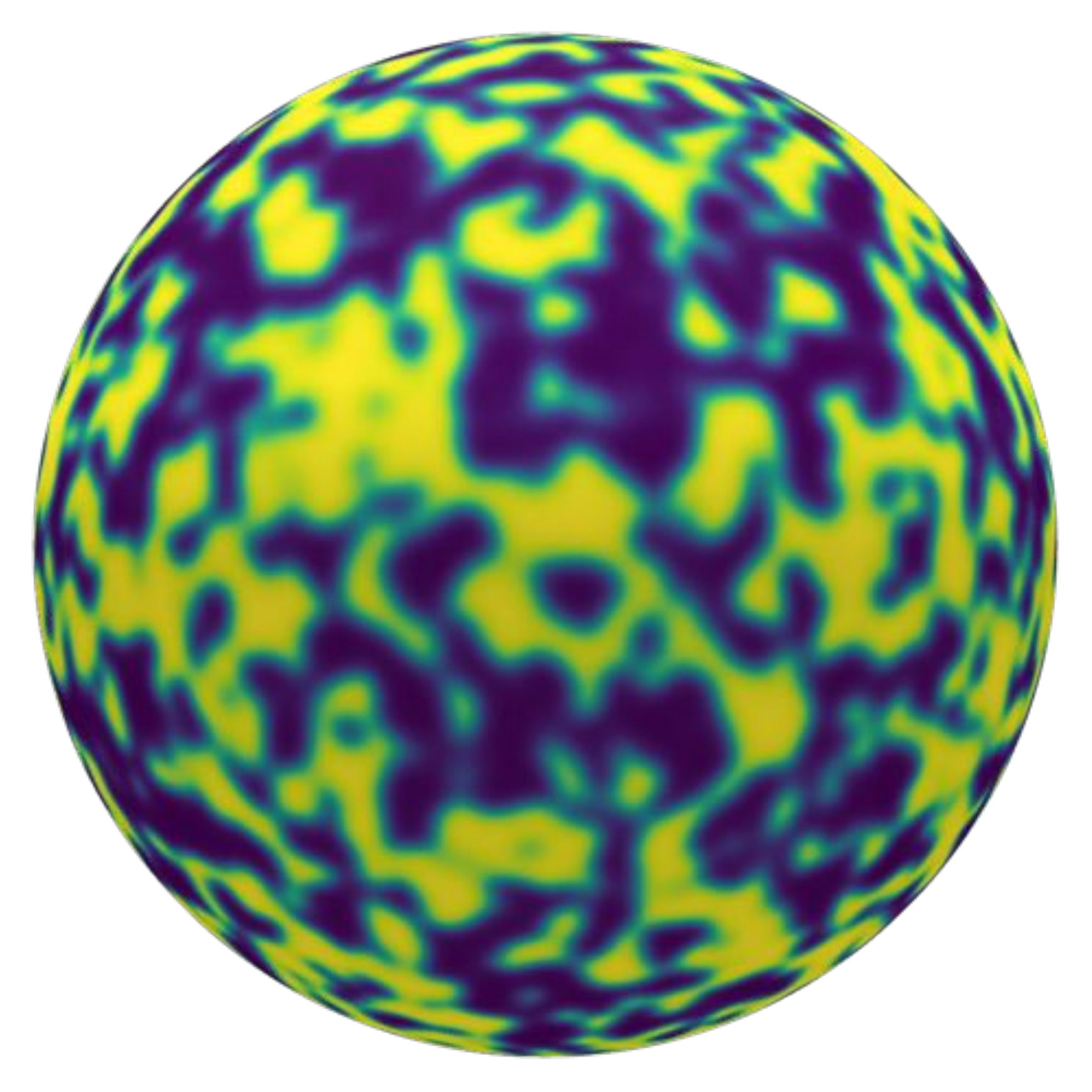}&
\includegraphics[width=0.17\textwidth]{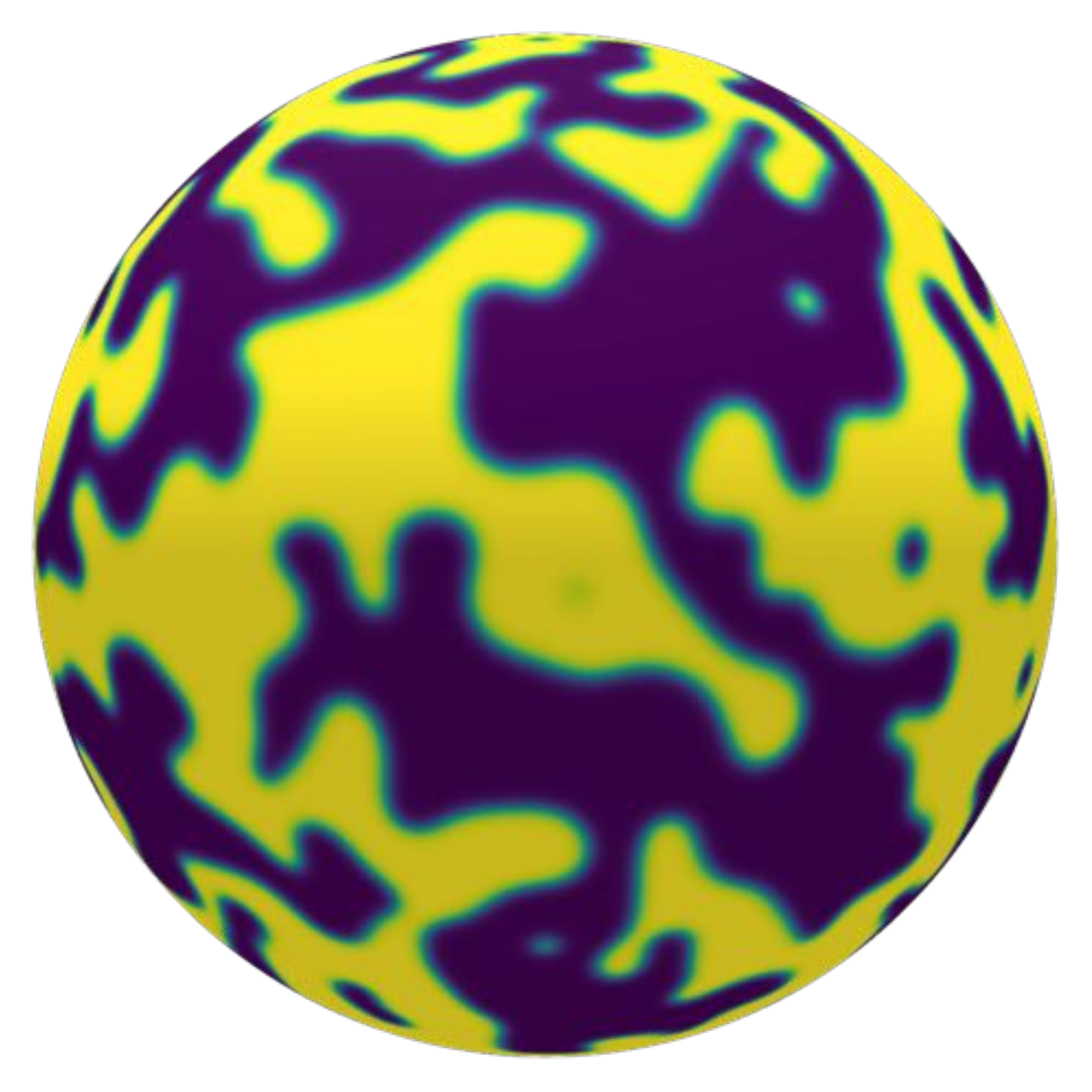}&
\includegraphics[width=0.17\textwidth]{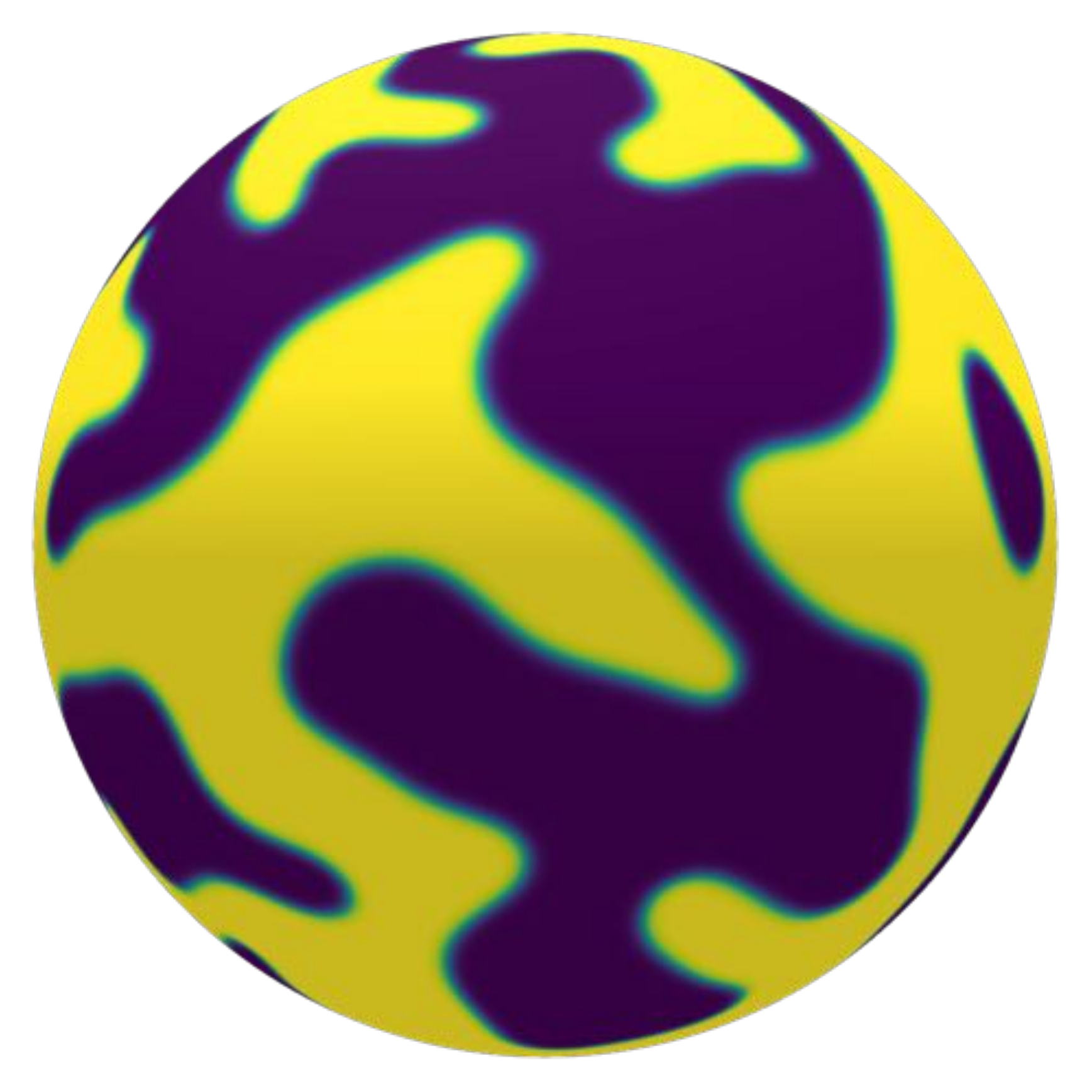}\\
\includegraphics[width=0.17\textwidth]{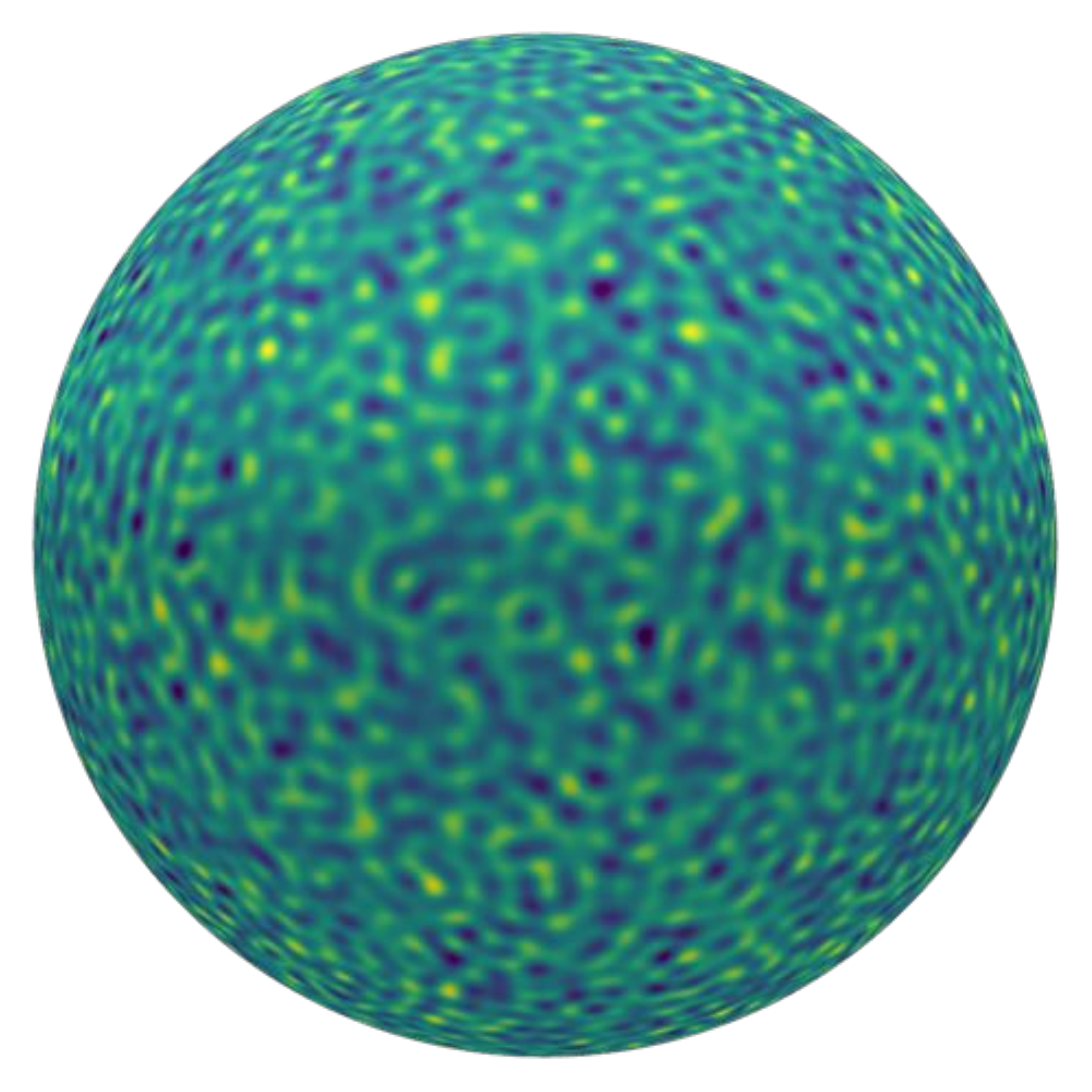}&
\includegraphics[width=0.17\textwidth]{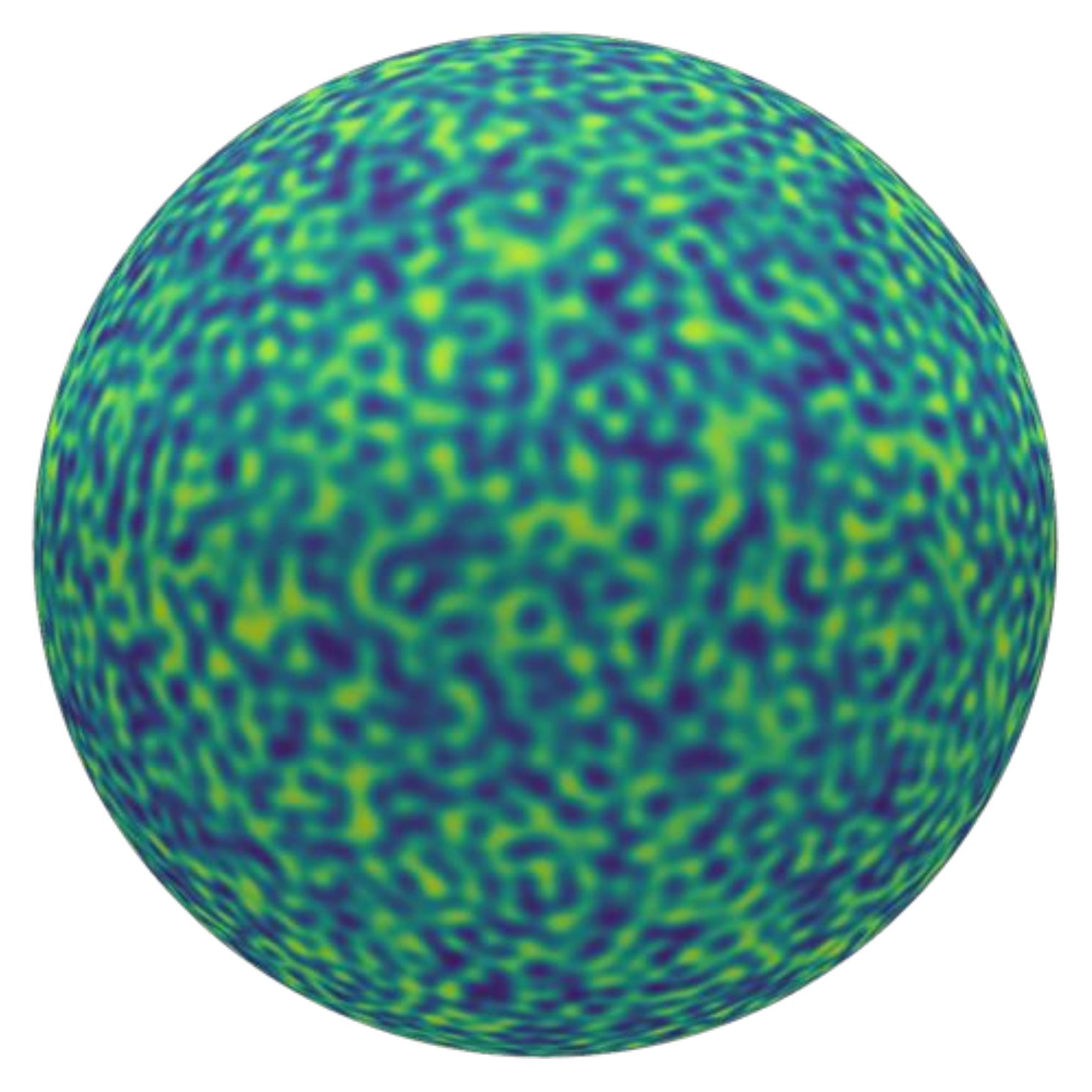}&
\includegraphics[width=0.17\textwidth]{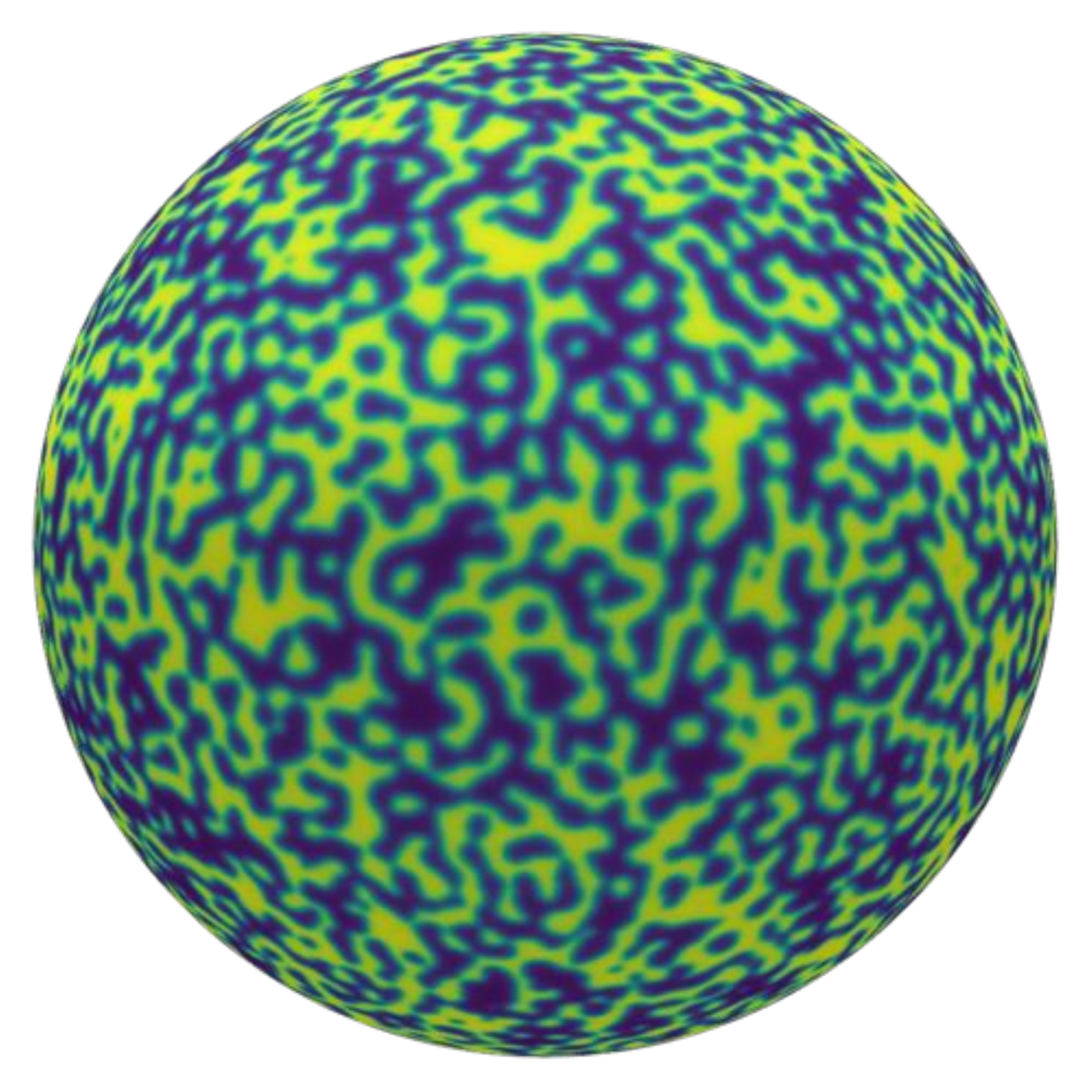}&
\includegraphics[width=0.17\textwidth]{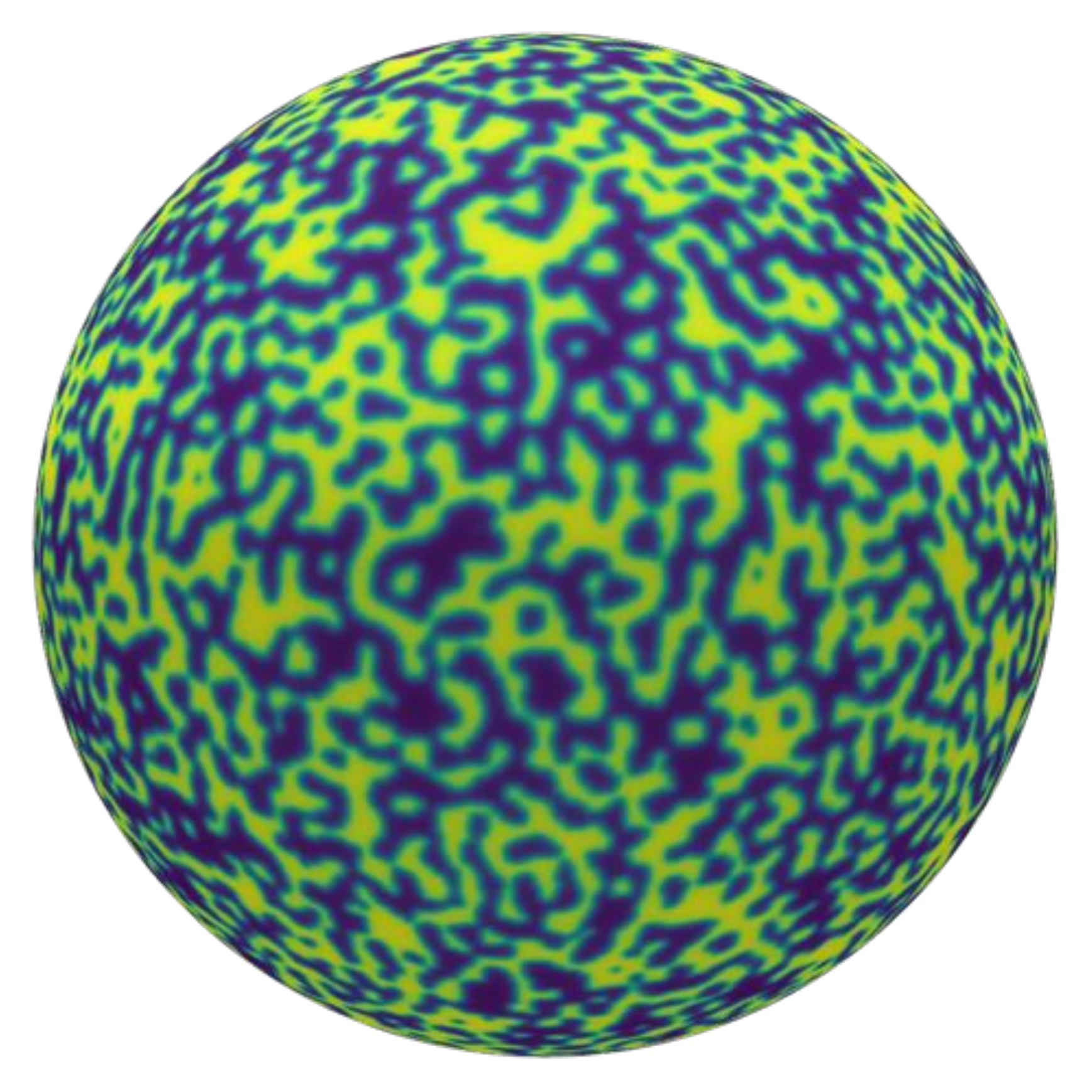}&
\includegraphics[width=0.17\textwidth]{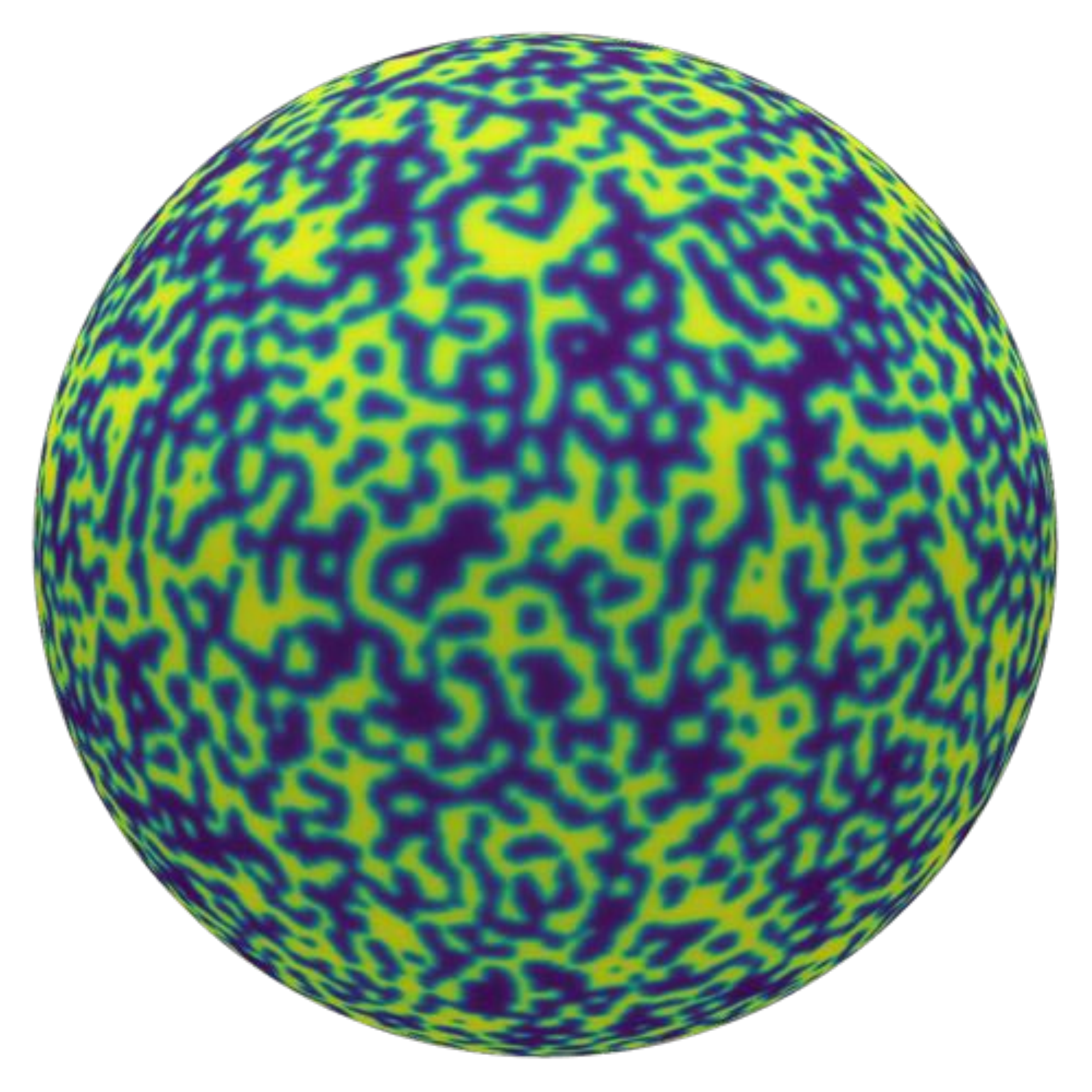}\\
\end{tabular}
\caption{Time evolution of the local (top) and nonlocal (bottom) Allen--Cahn equation starting with a random initial condition provided by a spherical harmonic expansion with the first $128^2$ standard normally distributed numbers seeded by {\tt srand(0)} and scaled by $128^{-1}$ used to populate all coefficients of degree $\le 127$.}
\label{figure:AC}
\end{center}
\end{figure}

To demonstrate spatial convergence, the $2$-norm relative error between the solution obtained with a step size of $h = 2^{-8}$ and varying spherical harmonic degree $n$ 
versus the solution obtained with $h= 2^{-9}$ and $n=1023$ is displayed. 
And for the temporal convergence, the $2$-norm relative error between the solution obtained with a spherical harmonic degree $n = 1023$ and varying step sizes $h$ versus the solution obtained with approximately double the degree, $n=2047$, and half the final step size, $h=2^{-9}$, is displayed. 
Note that due to orthonormality of spherical harmonics, the $2$-norm relative error at $t=T$ between two approximations expanded in spherical harmonics
\begin{equation}
u_1(t=T,\theta,\varphi) = \sum_{\ell=0}^\infty \sum_{m=-\ell}^{+\ell} u_{1,\ell}^m(t=T) Y_\ell^m(\theta,\varphi),
\qquad u_2(t=T,\theta,\varphi) = \sum_{\ell=0}^\infty \sum_{m=-\ell}^{+\ell} u_{2,\ell}^m(t=T) Y_\ell^m(\theta,\varphi),
\end{equation}

\noindent is given by
\begin{equation}
\hbox{Relative Error at $t=T$}
\approx \dfrac{\dsp \sqrt{\sum_{\ell=0}^\infty\sum_{m=-\ell}^{+\ell} \abs{u_{1,\ell}^m(t=T)-u_{2,\ell}^m(t=T)}^2}}
{\dsp \sqrt{\sum_{\ell=0}^\infty\sum_{m=-\ell}^{+\ell} \abs{u_{2,\ell}^m(t=T)}^2}},
\end{equation}

\noindent where we have assumed that $u_2$ is the more accurate approximation to the exact solution.

Figure~\ref{fig:NAC_error} shows temporal and spatial convergences at times $t=1$ and $t=4$. 
Spectral spatial convergence and fourth-order temporal convergences are demonstrated at time $t=1$.
However, the spectral spatial convergence is significantly attenuated by the time $t=4$. By extrapolating from the left panel of Figure~\ref{fig:NAC_error}, it is easy to estimate that it would take a spherical harmonic expansion with a degree well beyond current capabilities to even begin to estimate the true fourth-order temporal convergence at $t=4$. This explains why no temporal convergence is observed at $t=4$. Furthermore, from the convergence estimates at $t=4$, Figure~\ref{fig:NAC_error} invites the possibility that the solution has become discontinuous in finite time.

The qualitative differences in the solutions of the local and nonlocal Allen--Cahn equations are most deftly observed by comparing simulations with random initial conditions. Therefore, in Figure~\ref{figure:AC}, a random initial condition and the solution at a geometrical time progression depict the fundamentally different qualitative behaviors of the local and nonlocal equations. In Figure~\ref{figure:AC}, the same $\epsilon$, $\alpha$, and $\delta$ are used, and the time steps are $h=10^{-1}$ and the maximal spherical harmonic degree is $n=511$.
We also show in Figure~\ref{figure:GL_free_energy} the evolution of the nonlocal Ginzburg--Landau free energy
\begin{equation}
\mathcal{E}(u) = \int_{\Sph^2}\left[-\frac{\epsilon^2}{2}u\LL_\delta u + \frac{1}{4}(u^2-1)^2\right]\ud\Omega. 
\end{equation}

\begin{figure}[t]
\begin{center}
\includegraphics[width=0.75\textwidth, height=0.5\textwidth]{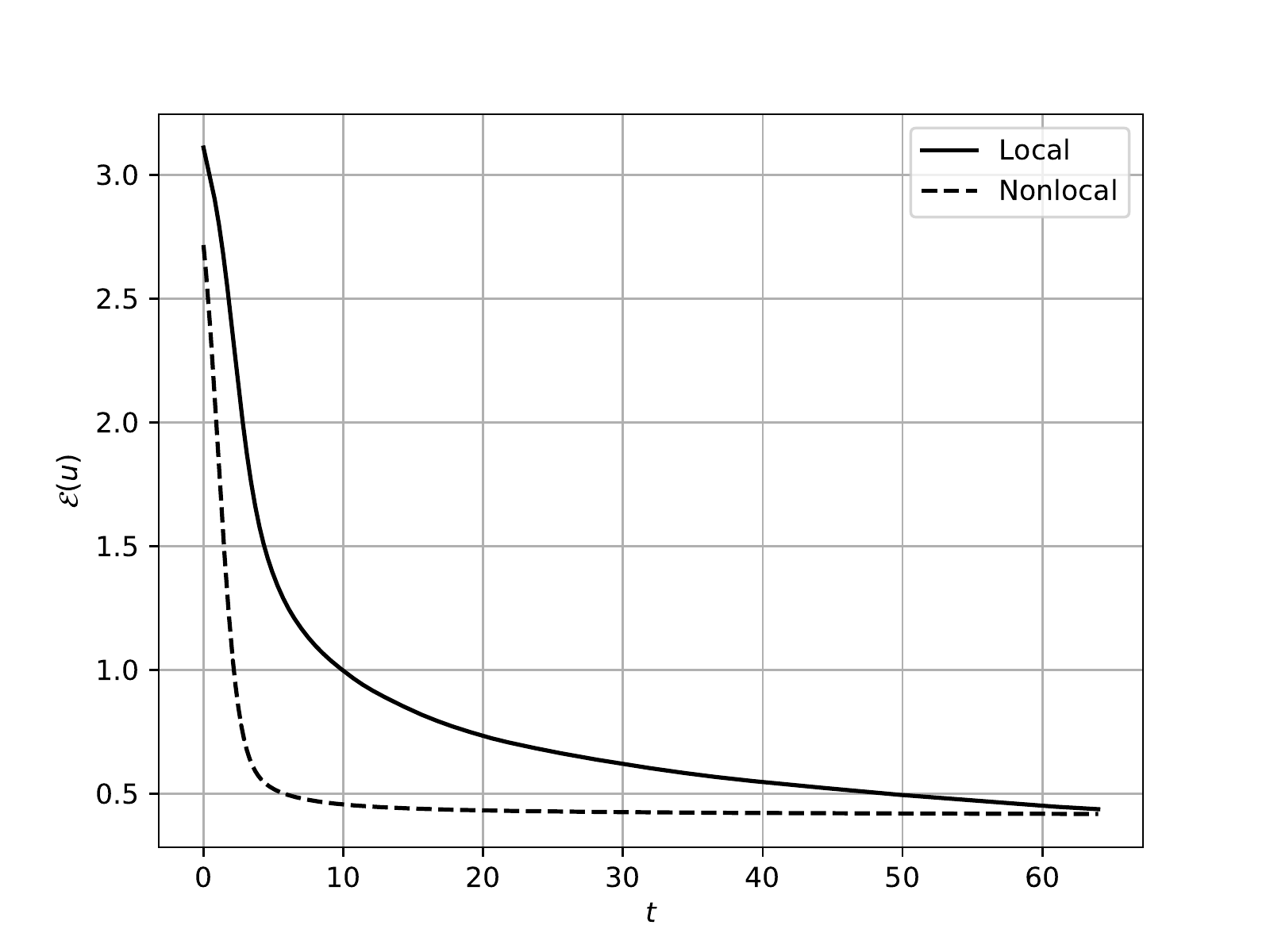}
\caption{Evolution of the Ginzburg--Landau free energies of the local and nonlocal solutions depicted in Figure~\ref{figure:AC}. Both free energies are decreasing functions of time, confirming that the spatial and temporal accuracies are sufficient to exhibit correct qualitative behavior. It appears that the nonlocal free energy may be positively bounded below, while the local free energy appears to continue to decrease.}
\label{figure:GL_free_energy}
\end{center}
\end{figure}

%
%

\paragraph{Brusselator equations}
The dynamics of spot patterns in the Brusselator model for reaction-diffusion systems on the sphere was studied by Trinh and Ward in~\cite{trinh2016}.
A nonlocal version of the Brusselator equations is defined by the coupled system
\begin{equation}
\left\{
\begin{array}{l}
u_t = \epsilon^2\LL_\delta u + \epsilon^2 E -u + fu^2v, \\[10pt]
\tau v_t = \LL_\delta v + \epsilon^{-2}(u-u^2v),
\end{array}
\right. 
\label{eq:BR}
\end{equation}

\noindent for some constants $E>0$, $\epsilon>0$, $\tau>0$, and $0<f<1$. By perturbing the steady-state equilibria
\begin{equation}
u_e = \frac{\epsilon^2E}{1-f} = v_e^{-1},
\end{equation}

\noindent by as little as a $1\%$ fluctuation, the emergence of localized spot patterns is observed~\cite{trinh2016}. 
In Figure~\ref{figure:BR}, we consider a similar random initial condition with the same parameter values $E = 4$, $\epsilon = 0.075$, $\tau = 7.8125$, and $f = 0.8$ 
but also consider the reaction with a nonlocal diffusion. 
In contrast to previous figures, the color is now scaled to the extrema of the solutions $u$ and $v$, with the extrema shown below each plot. 
Whereas we replicate a similar spot pattern for the localized diffusion, when the nonlocal diffusion operator with $\alpha = 0$, $\delta = 1$, $n = 399$, and $h=10^{-1}$, the solution is speckled.

\begin{figure}[t]
\begin{center}
\begin{tabular}{ccccc}
$t=0$ & $t=1$ & $t=4$ & $t=16$ & $t=64$\\
\includegraphics[width=0.17\textwidth]{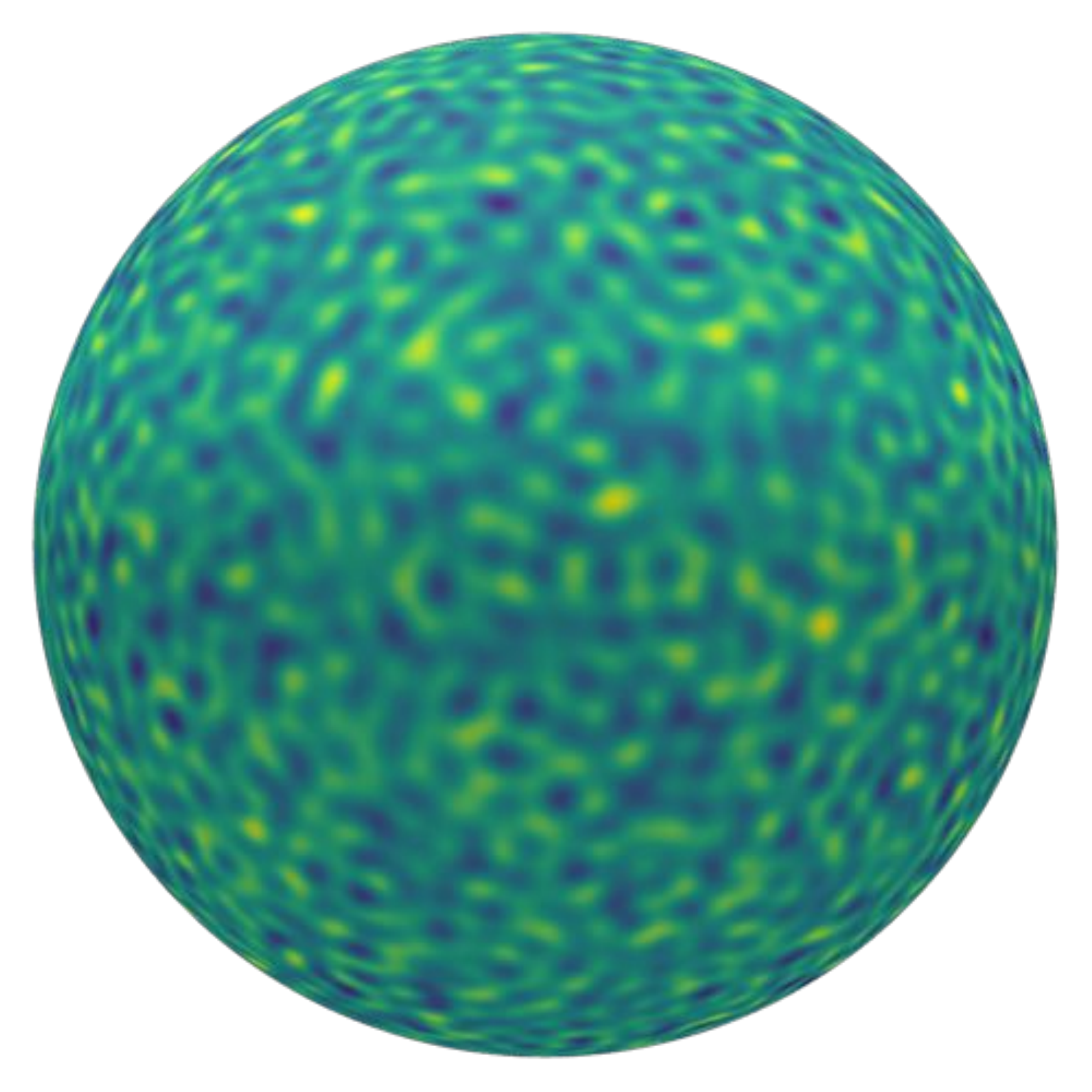}&
\includegraphics[width=0.17\textwidth]{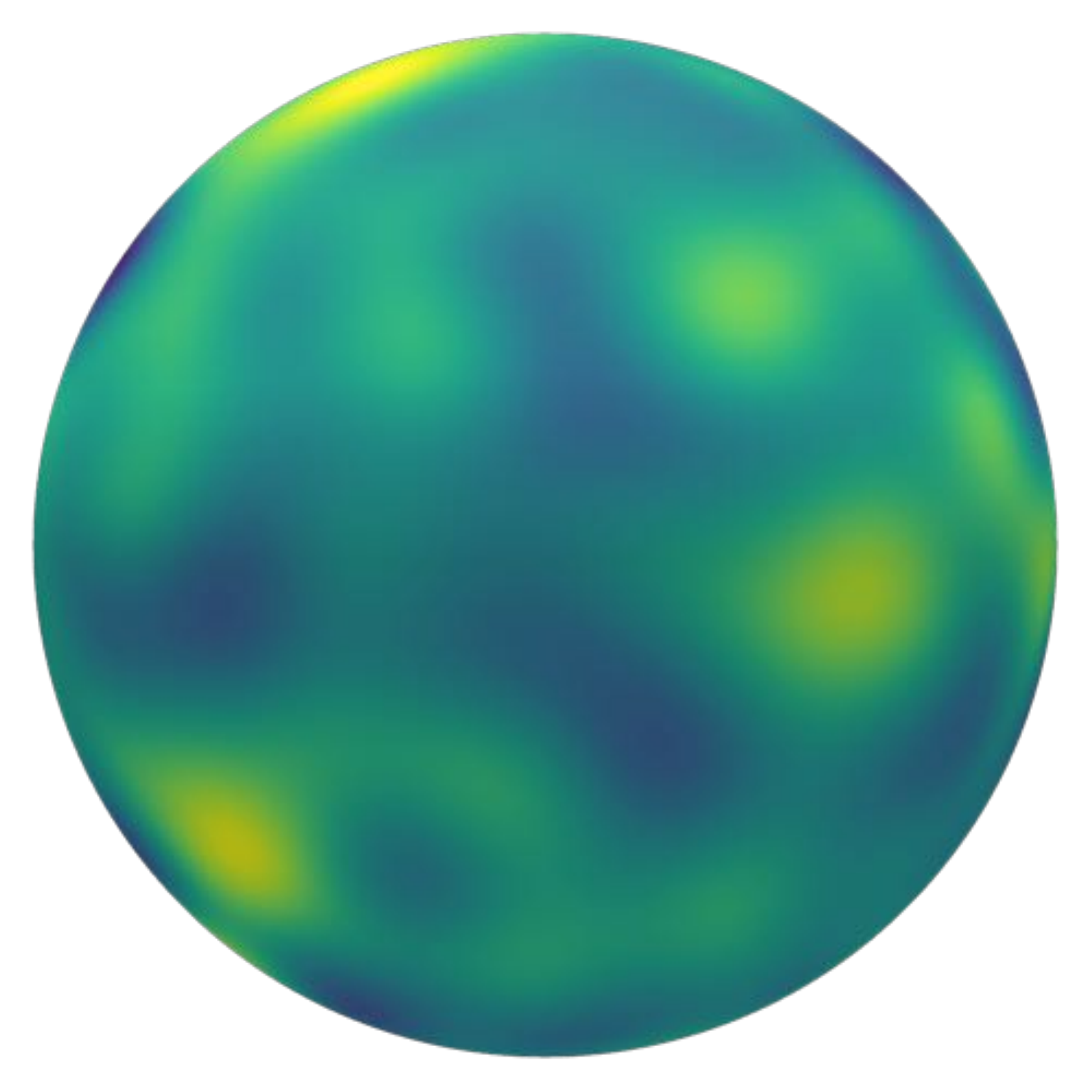}&
\includegraphics[width=0.17\textwidth]{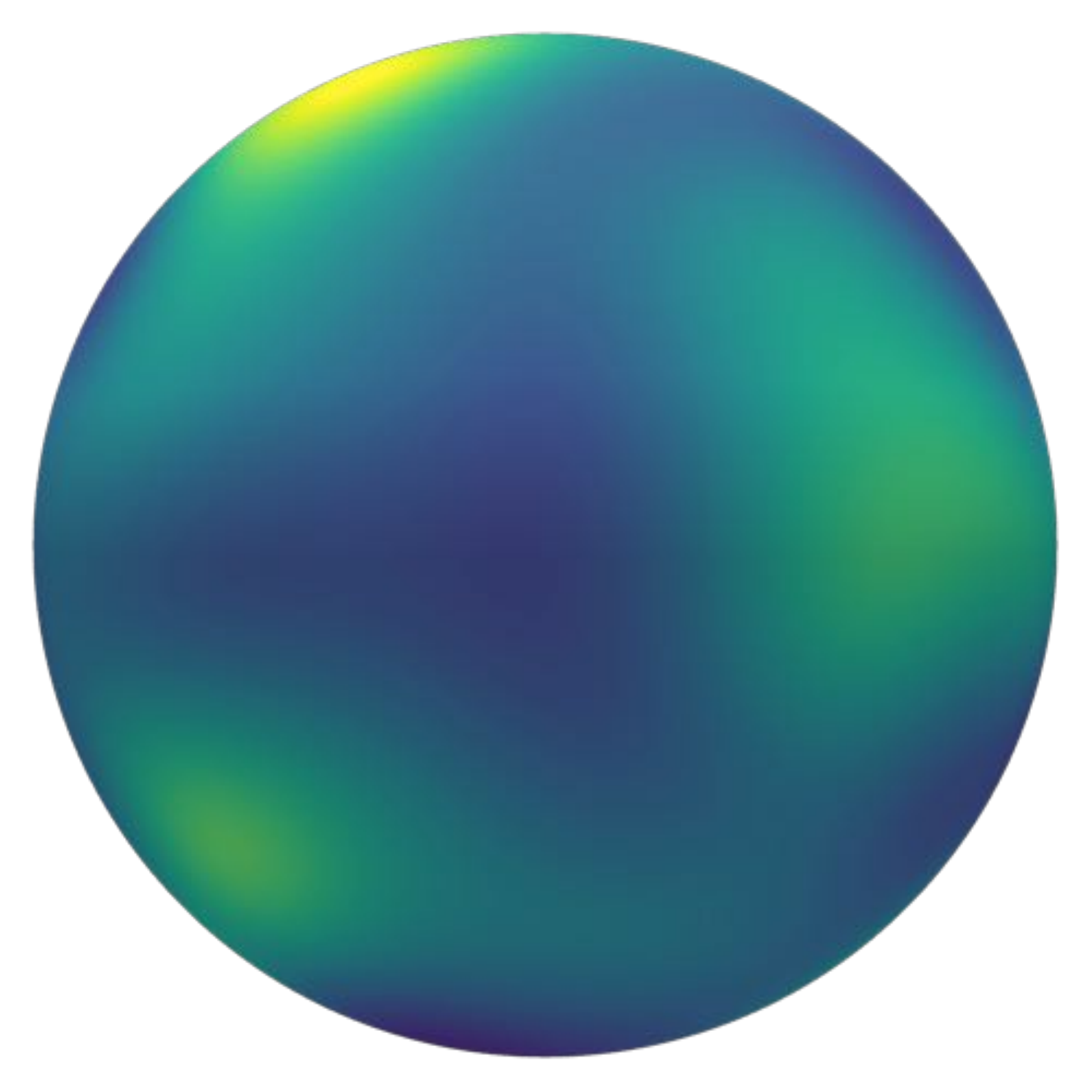}&
\includegraphics[width=0.17\textwidth]{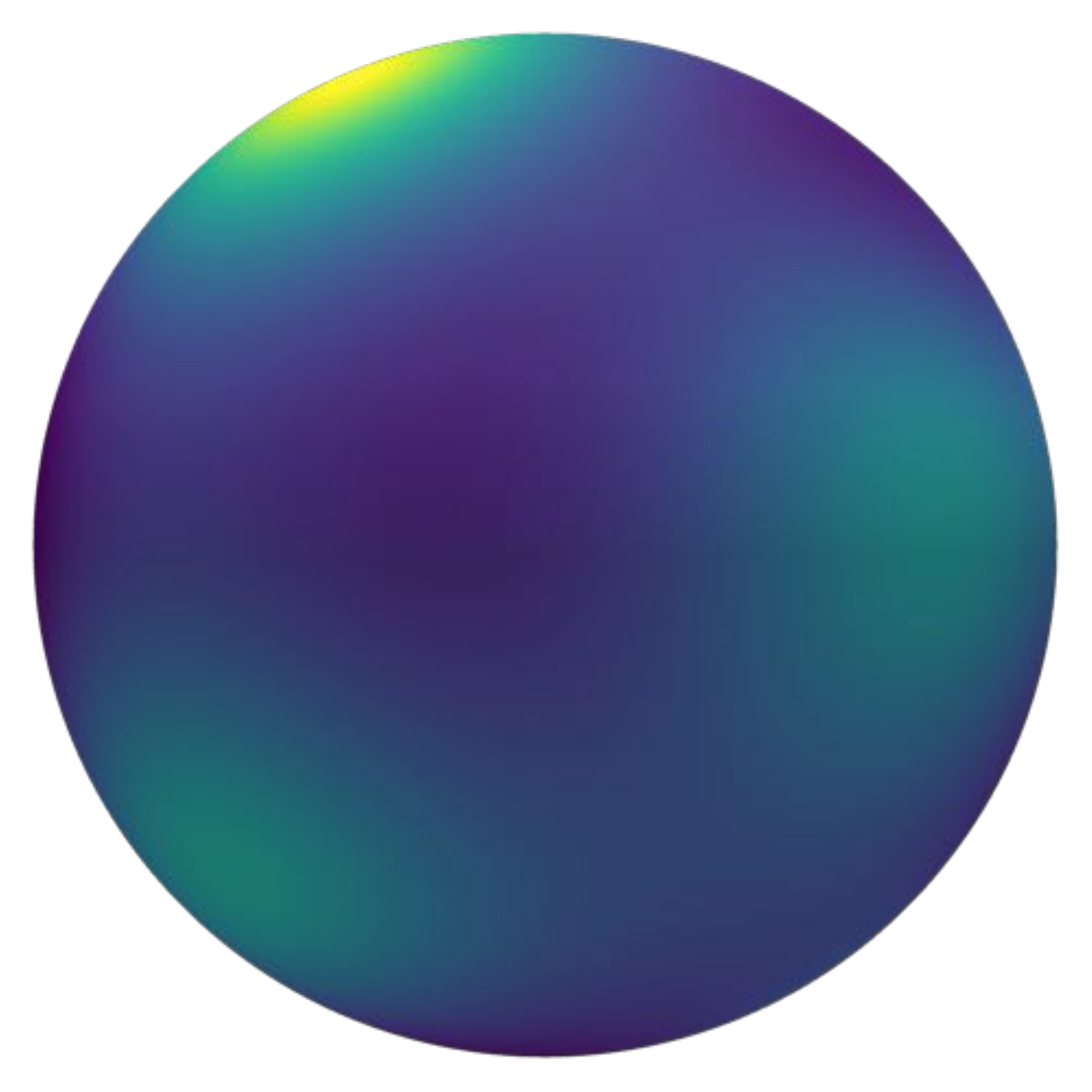}&
\includegraphics[width=0.17\textwidth]{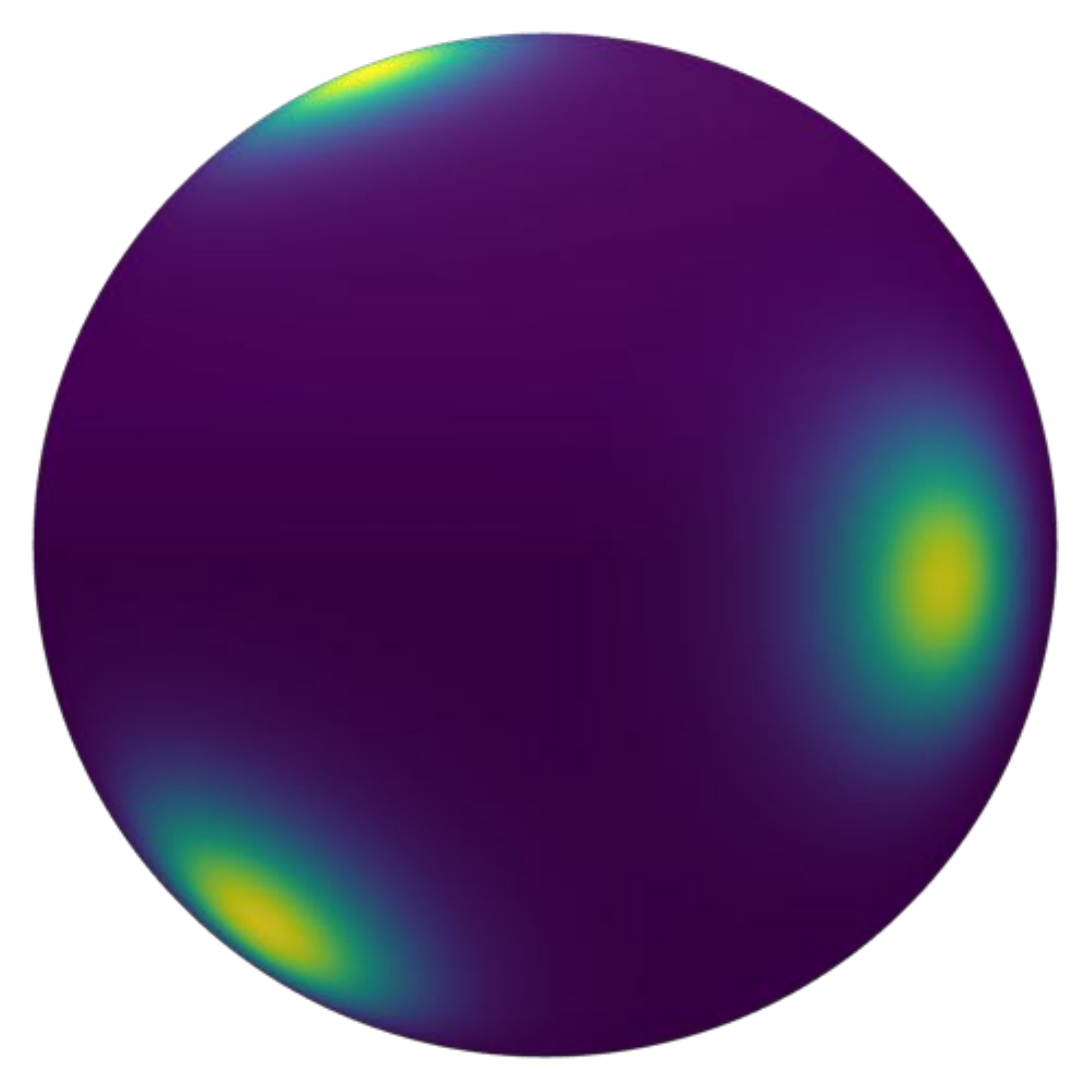}\\
$u\in[0.110,0.114]$ & $u\in[0.112,0.113]$ & $u\in[0.112,0.113]$ & $u\in[0.089,0.173]$ & $u\in[0.024,1.06]$\\
\includegraphics[width=0.17\textwidth]{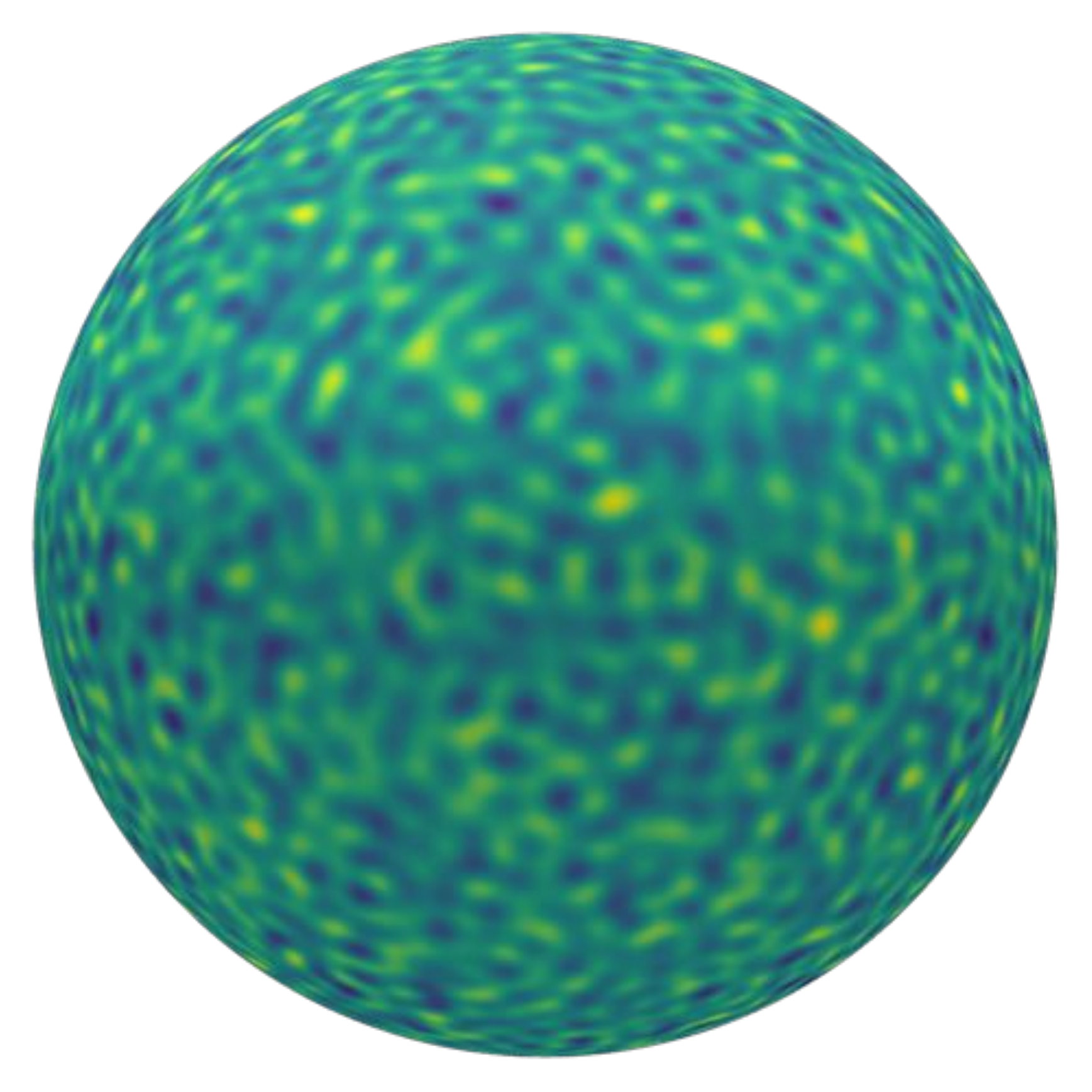}&
\includegraphics[width=0.17\textwidth]{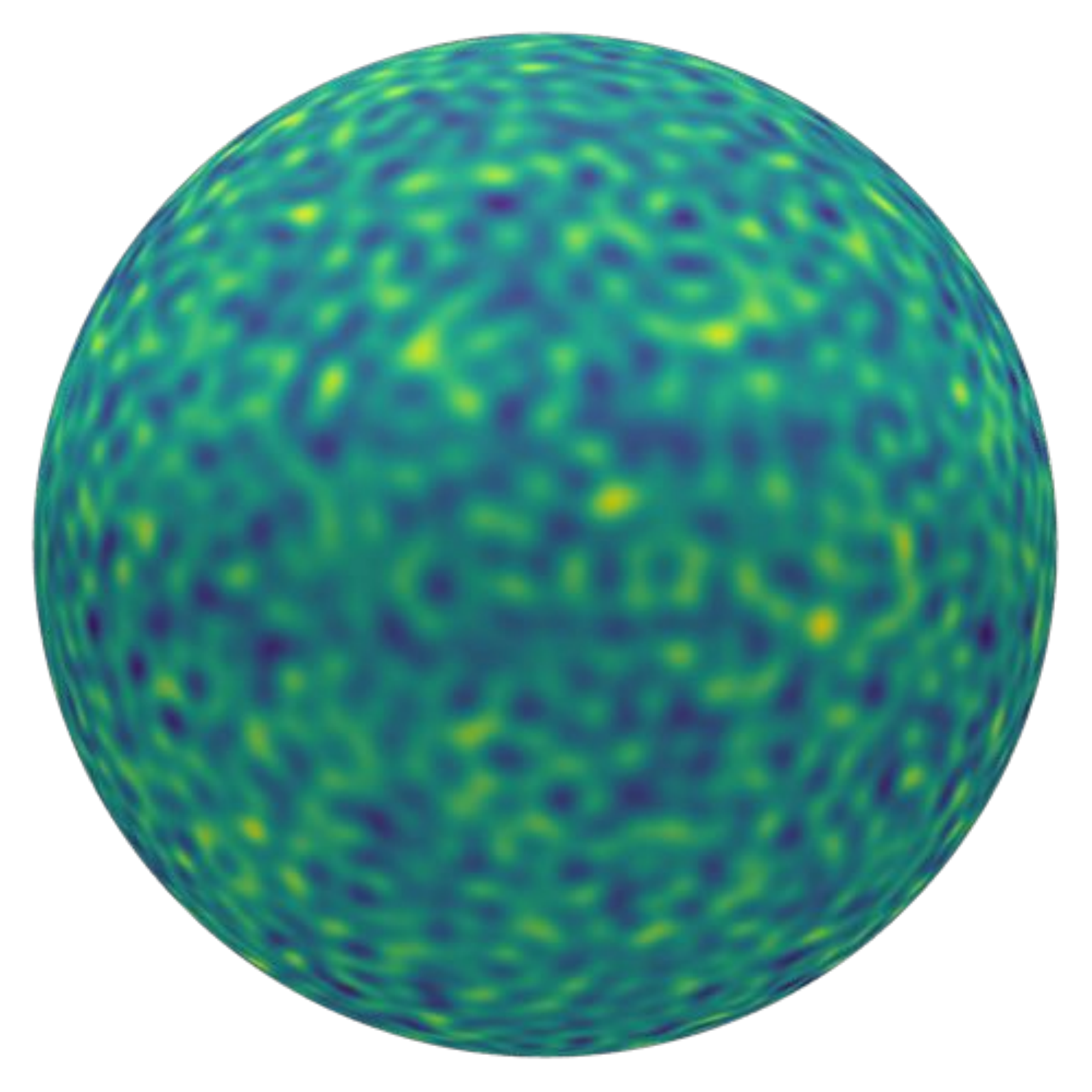}&
\includegraphics[width=0.17\textwidth]{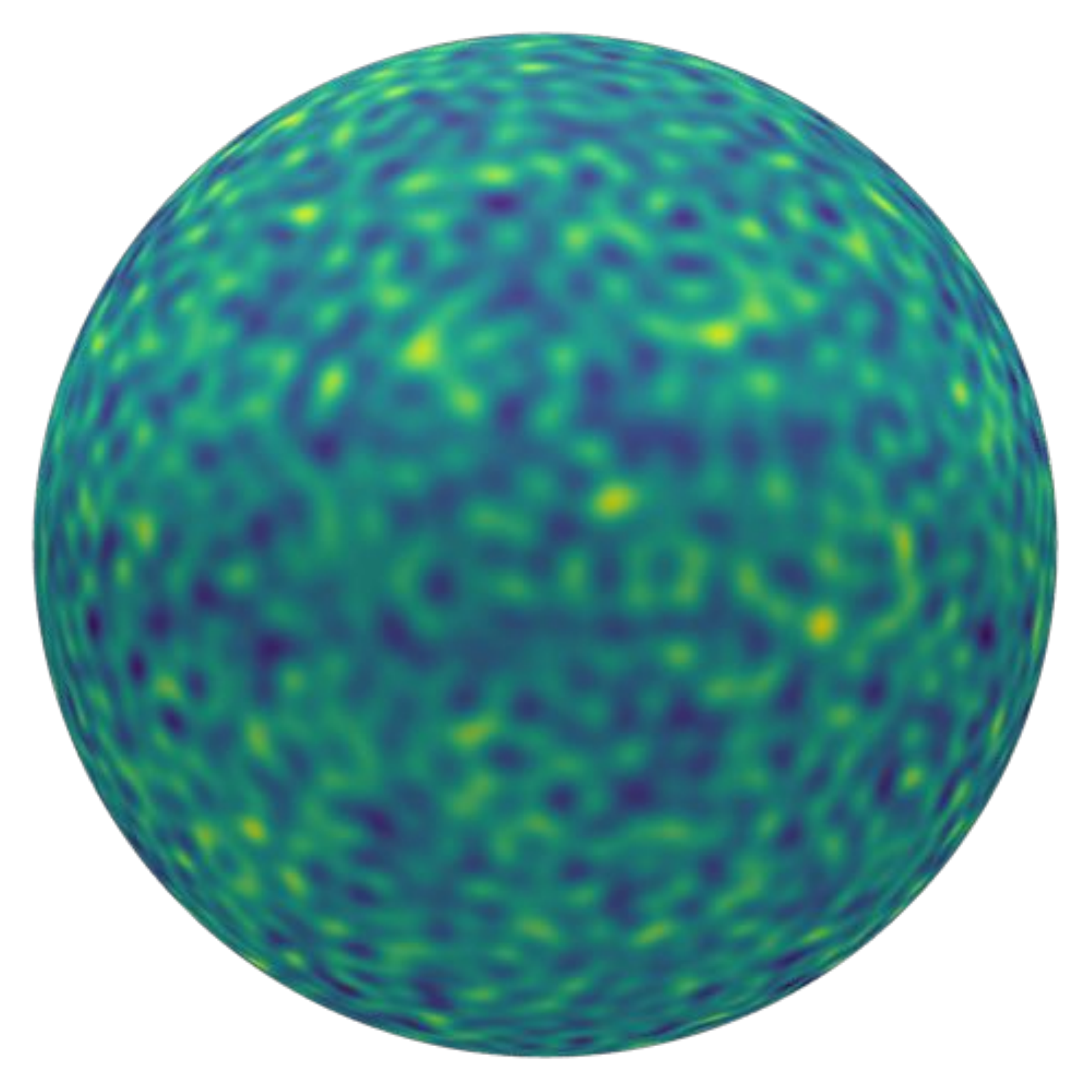}&
\includegraphics[width=0.17\textwidth]{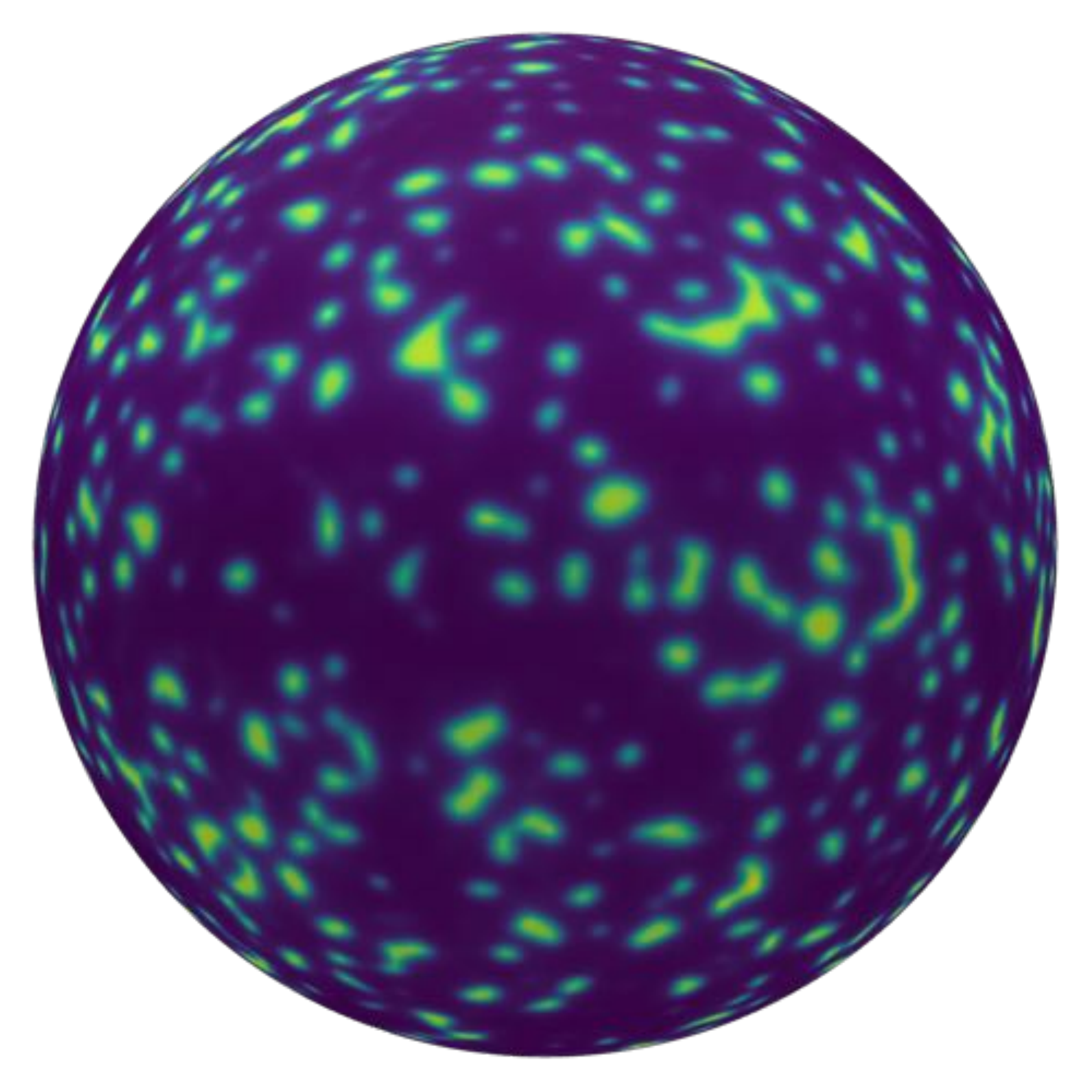}&
\includegraphics[width=0.17\textwidth]{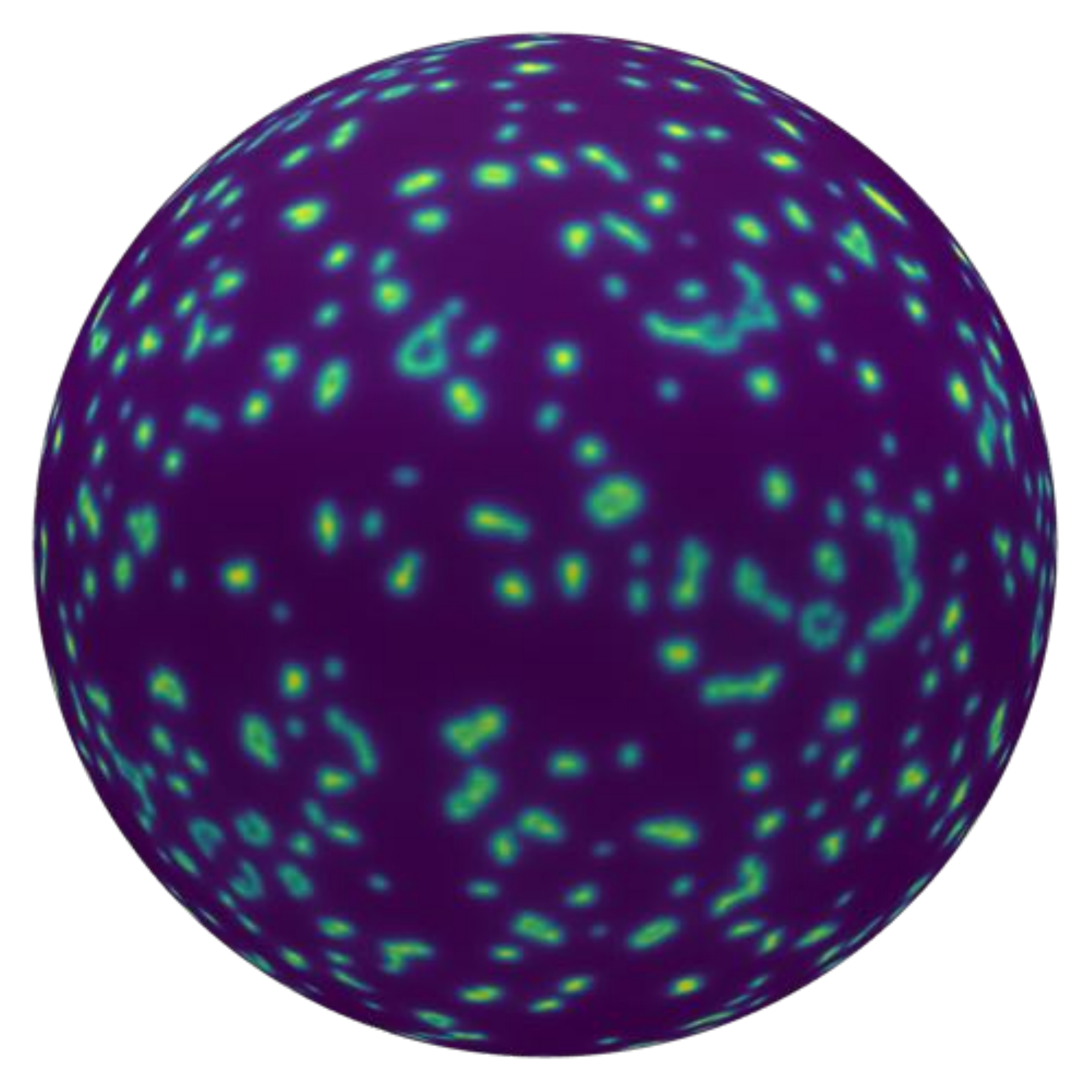}\\
$u\in[0.110,0.114]$ & $u\in[0.110,0.114]$ & $u\in[0.108,0.118]$ & $u\in[0.056,1.05]$ & $u\in[0.036,0.711]$\\
\end{tabular}
\caption{Time evolution of the local (top) and nonlocal (bottom) Brusselator equations with the same parameter values as in \cite{trinh2016} and a similar random initial perturbation of the steady-states provided by a spherical harmonic expansion with the first $2\times 100^2$ standard normally distributed numbers seeded by {\tt srand(0)} and scaled by $(\pi\times 100\times100)^{-1}$ used to populate all coefficients of degree $\le 99$, half for each initial condition.}
\label{figure:BR}
\end{center}
\end{figure}

\section{Discussion}

We have presented algorithms for solving nonlocal diffusion models on the sphere with spectral accuracy in space and high-order
accuracy in time.
These are based on the diagonalization of the nonlocal Laplace--Beltrami operator, the high-accuracy computation of their eigenvalues, 
a fast spherical harmonic transform and exponential integrators.
We have applied our method to the nonlocal Allen--Cahn and Brusselator equations. 
Notwithstanding the potential convergence to discontinuous equilibria accompanied by the Gibbs phenomenon, we are able to remove the non-physical oscillations by the use of Ces\`aro means.

Our {\sc Julia} codes are available online at {\em GitHub} (\texttt{FastTransforms.jl} and \texttt{SpectralTimeStepping.jl} packages).
A MATLAB version, based on Chebfun \cite{chebfun} and its recent extensions to periodic problems \cite{montanelli2015b} and 
the sphere \cite{montanelli2017b, townsend2016} and {\tt mex}-ing in the fast spherical harmonic transform from {\sc Julia}, is available upon request.

There are many ways to continue the analysis of nonlocal diffusion operators on the sphere.
Nonlocal diffusion operators introduce new qualitative behaviors in contrast to classical PDEs of evolution and their steady-states. Indeed, nonlocal interactions are ubiquitous in nature and they are also generic features of model reduction~\cite{du2018}.

Our first particular choice of a nonlocal diffusion operator \eqref{operator} is motivated by an expedited numerical analysis of the spectrum. Another choice for a nonlocal operator is the fractional Laplacian,\footnote{It can be shown that the fractional Laplacian corresponds to our nonlocal operator with particular choices of the parameters $\alpha$ and $\delta$ for which the numerical integration may be performed exactly.} and yet another reasonable modification is to use geodesic distance in place of Euclidean distance. In \cite{slevinsky-olver2017}, a general approach is used for singular integral equations that could well be adapted to more general kernels $\rho_\delta(\xb, \yb)$ in the present setting. While beyond the scope of this report, one may argue that the nonlocal diffusion operator \eqref{operator} provides many of the qualitative behaviors that one might expect, has a natural integral definition that may be related to measurable physical quantities, and also has a few parameters that may be useful for modelling purposes. Furthermore, the Euclidean and geodesic distances are isomorphic and a function of either one may be rapidly expanded as a series of functions of the other.

While the steady-state of the nonlocal Allen--Cahn equation is known to be  possibly discontinuous \cite{du2016}, 
a potential topic for the analysis of nonlocal operators is whether discontinuities are attained in finite or infinite time. 
The spatial convergence of spectral methods, such as ours, assimilate the regularity of the solution: for analytic or entire solutions, this property makes them extremely competitive, structured linear algebra permitting; for discontinuous solutions, Ces\`aro means remove the Gibbs phenomenon, but it is not clear that a spectral method may perform any better or worse than a finite difference/element/volume method. An advantage of the spherical harmonic basis is the trivial computation of the spectrum of the nonlocal operator. If another spatial discretization were chosen, it is possible for this to become the bottleneck in the simulation of the evolution of the dynamical system. Another advantage is that we expect spectral accuracy {\em arbitrarily close to} the discontinuous steady-state if the initial condition is sufficiently smooth.

Other extensions include the design of similar algorithms for higher-order diffusion or gradient-type operators,
and the numerical solution of nonlocal PDEs on spheroids and other manifolds.
Nonlocal phase-field crystal models on the sphere could also be an exciting application.

\section*{Acknowledgments}

We thank the members of the CM3 group (Ran Gu, Hwi Lee, Qi Sun and Yunzhe Tao) at Columbia University 
for fruitful discussions. We thank Feng Dai for an enlightening discussion on Ces\`aro means. We also thank Nick Trefethen for introducing us to Ignace Bogaert, who so effectively demonstrates the utility of Szeg\H o's asymptotic formula for iteration-free Gauss--Legendre quadrature. 
Finally, we would like to express our gratitude to the reviewers (Alex Townsend and the anonymous one) for their constructive reports.
Their comments and suggestions helped us to significantly improve the quality of the manuscript.
This research is supported in part by NSERC RGPIN-2017-05514, NSF DMS-1719699, AFOSR MURI center for material failure prediction through peridynamics, and the ARO MURI Grant W911NF-15-1-0562.

\bibliography{references}

\appendix

\renewcommand*{\thesection}{\Alph{section}}
\section{Proof of the generalized Funk--Hecke formula}\label{appendix:GFH}

\begin{lemma}\label{lemma:GFHlhs}
Provided $t\mapsto(1-t)f(t)\in L^1([-1,1])$,
\begin{equation}
\label{eq:GFHlhs}
\abs{\int_{\Sph^2} f(\xb\cdot\zb)[P_\ell(\yb\cdot\zb)-P_\ell(\xb\cdot\yb)]\ud\Omega(\zb)}<+\infty.
\end{equation}
\end{lemma}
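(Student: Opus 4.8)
The plan is to reduce the spherical integral, by azimuthal symmetry about $\xb$, to the one-dimensional integral $\int_{-1}^1[P_\ell(t)-1]f(t)\,\ud t$, which is manifestly finite under the hypothesis. First I would parametrize $\zb\in\Sph^2$ by the pair $(t,\psi)$, where $t=\xb\cdot\zb\in[-1,1]$ and $\psi\in[0,2\pi)$ is the longitude of $\zb$ measured about the axis $\xb$, so that $\ud\Omega(\zb)=\ud t\,\ud\psi$. Since the weight $f(\xb\cdot\zb)=f(t)$ does not depend on $\psi$, it is natural to perform the $\psi$-integration first. The classical addition theorem for Legendre polynomials gives, for each fixed $t$,
\[
\frac{1}{2\pi}\int_0^{2\pi}P_\ell\bigl(\yb\cdot\zb(t,\psi)\bigr)\,\ud\psi=P_\ell(\xb\cdot\yb)\,P_\ell(t),
\]
because only the zeroth azimuthal mode survives the average. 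Hence the inner integral of the bracket in \eqref{eq:GFHlhs} equals $2\pi P_\ell(\xb\cdot\yb)\bigl[P_\ell(t)-1\bigr]$, and the spherical integral collapses to $2\pi P_\ell(\xb\cdot\yb)\int_{-1}^1\bigl[P_\ell(t)-1\bigr]f(t)\,\ud t$.

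Granting this reduction, the bound is elementary. On $[-1,1]$ one has $\abs{P_\ell(\xb\cdot\yb)}\le1$; moreover $P_\ell(1)=1$, so $P_\ell(t)-1$ is a polynomial vanishing at $t=1$, and $Q_{\ell-1}(t):=\bigl(P_\ell(t)-1\bigr)/(t-1)$ is a polynomial of degree $\ell-1$, continuous on the compact interval $[-1,1]$ and therefore bounded there by some constant $C_\ell$. Thus $\abs{P_\ell(t)-1}\le C_\ell(1-t)$ for all $t\in[-1,1]$, whence
\[
\abs{\int_{-1}^1\bigl[P_\ell(t)-1\bigr]f(t)\,\ud t}\le C_\ell\int_{-1}^1(1-t)\abs{f(t)}\,\ud t=C_\ell\,\norm{(1-\cdot)f}_{L^1([-1,1])}<+\infty,
\]
which is exactly the quantity assumed to be finite. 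Combining the two displays yields \eqref{eq:GFHlhs}.

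The one delicate step, and the part I expect to need genuine care, is legitimizing the azimuthal reduction when $f$ is not itself integrable---only $(1-t)f(t)$ is---since then the integrand of \eqref{eq:GFHlhs} need not be absolutely integrable in $\zb$, and a naive appeal to Tonelli--Fubini or to the Funk--Hecke formula \eqref{FunkHecke} is not licensed. I would handle this by truncation: for $\varepsilon>0$ put $f_\varepsilon=f\,\rchi_{\{t\le1-\varepsilon\}}\in L^1([-1,1])$, apply the classical Funk--Hecke formula \eqref{FunkHecke} with weight $f_\varepsilon$ to each of $P_\ell(\yb\cdot\zb)$ and the constant $P_\ell(\xb\cdot\yb)$ (legitimate since $f_\varepsilon\in L^1$), and then let $\varepsilon\to0^+$. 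On the one-dimensional side, dominated convergence applies because $\abs{[P_\ell(t)-1]f_\varepsilon(t)}\le C_\ell(1-t)\abs{f(t)}\in L^1([-1,1])$ uniformly in $\varepsilon$; transporting the same dominating function to $\Sph^2$ via $\ud\Omega=\ud t\,\ud\psi$ shows the spherical side converges as well, and identifies its limit with the integral in \eqref{eq:GFHlhs}, read as the $\varepsilon\to0$ principal value used throughout Section~2. The underlying mechanism is that azimuthal averaging annihilates the first-order (generically $\OO(\sqrt{1-t})$) part of $P_\ell(\yb\cdot\zb)-P_\ell(\xb\cdot\yb)$ near $\zb=\xb$, leaving only an $\OO(1-t)$ remainder---which is precisely why $L^1$-control of $(1-t)f(t)$, rather than of $f$, suffices. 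This same truncation argument is what will subsequently justify the interchange of integrations in the derivation of the eigenvalues.
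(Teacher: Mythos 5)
Your proof is correct in substance but follows a genuinely different route from the paper's. You collapse the azimuthal integral in one stroke via the Legendre addition theorem, $\frac{1}{2\pi}\int_0^{2\pi}P_\ell(\yb\cdot\zb)\,\ud\psi = P_\ell(\xb\cdot\yb)P_\ell(t)$, and then finish with the elementary bounds $\abs{P_\ell(t)-1}\le C_\ell(1-t)$ and $\abs{P_\ell(\xb\cdot\yb)}\le1$; the paper instead factors the integrand as $(1-\xb\cdot\zb)f(\xb\cdot\zb)$ times the bounded divided difference $[P_\ell(\yb\cdot\zb)-P_\ell(\xb\cdot\yb)]/(\yb\cdot\zb-\xb\cdot\yb)$ (expanded in a finite spherical harmonic series in $\zb$) times the singular ratio $(\yb\cdot\zb-\xb\cdot\yb)/(1-\xb\cdot\zb)$, and then checks mode by mode ($m=0$ versus $\abs{m}>0$) that the $\varphi$-integral of the singular ratio against each harmonic remains bounded. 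Both arguments rest on the same mechanism---azimuthal integration annihilates the $\OO(\sqrt{1-t})$ part of the bracket, leaving an $\OO(1-t)$ remainder matched to the hypothesis $(1-t)f\in L^1$---but your addition-theorem reduction is shorter and in fact already yields the value $2\pi P_\ell(\xb\cdot\yb)\gamma_\ell$, so it comes close to proving Proposition~\ref{proposition:GFH} outright; indeed your truncation step with $f_\varepsilon$ and the classical Funk--Hecke formula mirrors the paper's separate proof of the Proposition (with its truncations $f_k$) rather than its proof of this lemma. One point to phrase more carefully: ``transporting the dominating function to $\Sph^2$'' is not literally available, since the raw spherical integrand is only $\OO(\sqrt{1-t})\abs{f(t)}$ near $\zb=\xb$ and need not be dominated by an integrable function (e.g.\ for the kernel \eqref{kernel} with $-1<\alpha\le-\tfrac12$); what legitimizes the passage is precisely the reading you adopt---and which the paper also uses, via its $\lim_{\varepsilon\to0^+}\int_\varepsilon^\pi$---namely the cap-excised, azimuthal-first iterated integral, under which your pointwise-in-$t$ addition-theorem identity together with one-dimensional dominated convergence closes the argument.
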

\begin{proof}
Without loss of generality, we assume that $\xb = (0,0,1)^\top$ is located at the North pole. 
If it were not, we could introduce an orthogonal rotation of coordinates to make it so. Then, let
\begin{equation}
\yb = (y_1,y_2,y_3)^\top\quad{\rm and}\quad \zb = (\sin\theta\cos\varphi,\sin\theta\sin\varphi,\cos\theta)^\top,
\end{equation}
be the other points on the sphere. We may write
\begin{equation}
\begin{array}{ll}
& \dsp \int_{\Sph^2} f(\xb\cdot\zb) [P_\ell(\yb\cdot\zb)-P_\ell(\xb\cdot\yb)]\ud\Omega(\zb),\\[10pt]
= & \dsp \int_{\Sph^2} (1-\xb\cdot\zb)f(\xb\cdot\zb)\dfrac{\yb\cdot\zb-\xb\cdot\yb}{1-\xb\cdot\zb}\dfrac{P_\ell(\yb\cdot\zb)-P_\ell(\xb\cdot\yb)}{\yb\cdot\zb-\xb\cdot\yb}\ud\Omega(\zb),
\end{array}
\end{equation}
provided we characterize the singularities that are introduced. The last term is bounded, for
\begin{equation}
\lim_{\yb\cdot\zb\to\xb\cdot\yb} \dfrac{P_\ell(\yb\cdot\zb)-P_\ell(\xb\cdot\yb)}{\yb\cdot\zb-\xb\cdot\yb} = P_\ell'(\xb\cdot\yb).
\end{equation}
Since it is a rational function with a single removable singularity, it has a finite spherical harmonic series,
\begin{equation}
\dfrac{P_\ell(\yb\cdot\zb)-P_\ell(\xb\cdot\yb)}{\yb\cdot\zb-\xb\cdot\yb} = \sum_{\lambda=0}^{\ell-1} \sum_{m=-\lambda}^{+\lambda} u_\lambda^m(\xb,\yb) Y_\lambda^m(\zb),
\end{equation}
where the coefficients are functions of $\xb$ and $\yb$. In terms of the spherical coordinates, the second term is
\begin{equation}
\dfrac{\yb\cdot\zb-\xb\cdot\yb}{1-\xb\cdot\zb} = \dfrac{(y_1\cos\varphi+y_2\sin\varphi)\sin\theta + y_3(\cos\theta-1)}{1-\cos\theta}.
\end{equation}
Then the integral is
\begin{equation}
\begin{array}{ll}
& \dsp \int_{\Sph^2} f(\xb\cdot\zb)[P_\ell(\yb\cdot\zb)-P_\ell(\xb\cdot\yb)]\ud\Omega(\zb),\\[10pt]
= & \dsp \lim_{\varepsilon\to0^+}\int_\varepsilon^\pi (1-\cos\theta)f(\cos\theta) \sin\theta\ud\theta\\[10pt]
& \dsp \times \int_0^{2\pi} \dfrac{(y_1\cos\varphi+y_2\sin\varphi)\sin\theta + y_3(\cos\theta-1)}{1-\cos\theta}\dfrac{P_\ell(\yb\cdot\zb)-P_\ell(\xb\cdot\yb)}{\yb\cdot\zb-\xb\cdot\yb}\ud\varphi,\\[10pt]
= & \dsp \sum_{\lambda=0}^{\ell-1} \sum_{m=-\lambda}^{+\lambda} u_\lambda^m(\xb,\yb) \lim_{\varepsilon\to0^+}\int_\varepsilon^\pi (1-\cos\theta)f(\cos\theta) \sin\theta\ud\theta\\[10pt]
& \dsp \times \int_0^{2\pi} \dfrac{(y_1\cos\varphi+y_2\sin\varphi)\sin\theta + y_3(\cos\theta-1)}{1-\cos\theta} Y_\lambda^m(\theta,\varphi)\ud\varphi.
\end{array}
\end{equation}
There are two cases to consider. On the one hand, if $m=0$, then $Y_\lambda^0(\theta,\varphi)$ is independent of $\varphi$, and
\begin{equation}
\begin{array}{ll}
& \dsp \int_0^{2\pi}\dfrac{(y_1\cos\varphi+y_2\sin\varphi)\sin\theta + y_3(\cos\theta-1)}{1-\cos\theta} Y_\lambda^0(\theta,\varphi)\ud\varphi,\\[10pt]
= & \dsp \int_0^{2\pi} \dfrac{y_3(\cos\theta-1)}{1-\cos\theta} Y_\lambda^0(\theta,\varphi)\ud\varphi = -2\pi y_3 Y_\lambda^0(\theta,\varphi).
\end{array}
\end{equation}
On the other hand, if $\abs{m}>0$, then the spherical harmonics contain at least a power of $\sin\theta$ such that
\begin{equation}
\dfrac{\sin\theta Y_\lambda^m(\theta,\varphi)}{1-\cos\theta}
\end{equation}
has a removable singularity as $\theta\to0^+$, proving that the integral in Equation~\eqref{eq:GFHlhs} is bounded.
\end{proof}

\begin{proof}[Proof of Proposition~\ref{proposition:GFH}]
First, let us note that by Lemma~\ref{lemma:GFHlhs} the left-hand side of \eqref{GeneralizedFunkHecke} is bounded. 
Let us now focus on the right-hand side of \eqref{GeneralizedFunkHecke} and consider the sequence $f_k$ of non-negative and \textit{integrable} functions 
defined by\footnote{This does not limit the proof to non-negative functions for we may split $f$ into $f^+ = \max\{f,0\}$ and $f^- = \min\{f,0\}$. However, it does simplify the argument.}
\begin{equation}
0 \leq f_k (t)= \left\{
\begin{array}{ll}
0, & t\in \{\vert f(t)\vert>k\}\cap\{t>1-1/k\},\\[10pt]
f(t), & \mbox{otherwise.}
\end{array}
\right.
\end{equation}
The generalized Funk--Hecke formula is clearly true for any $f_k$ as the Funk--Hecke formula is applicable in both instances with $P_\ell$ and $P_0=1$, respectively.
For any $\varepsilon>0$, let us write
\begin{equation}
\gamma_\ell = \int_{-1}^1[P_\ell(t)-1]f(t)\ud t = \int_{-1}^{1-\varepsilon}[P_\ell(t)-1]f(t)\ud t + \int_{1-\varepsilon}^{1}[P_\ell(t)-1]f(t)\ud t.
\label{eq1}
\end{equation}
\noindent Then there is a $K=K(\varepsilon)>\varepsilon^{-1}$ such that for any $k\geq K$, $f$ and $f_k$ coincide on $[-1,1-\varepsilon]$. Therefore
\begin{equation}
\gamma_\ell = \int_{-1}^{1-\varepsilon}[P_\ell(t)-1]f_k(t)\ud t + \int_{1-\varepsilon}^{1}[P_\ell(t)-1]f(t)\ud t,
\label{eq2}
\end{equation}
\noindent for some $k\geq K$. By adding and subtracting $\int_{1-\varepsilon}^{1}[P_\ell(t)-1]f_k(t)\ud t$ to \eqref{eq2} we obtain 
\begin{equation}
\gamma_\ell = \int_{-1}^{1}[P_\ell(t)-1]f_k(t)\ud t + \int_{1-\varepsilon}^{1}[P_\ell(t)-1]f(t)\ud t - \int_{1-\varepsilon}^{1}[P_\ell(t)-1]f_k(t)\ud t.
\label{eq3}
\end{equation}
\noindent The first term on the right-hand side of \eqref{eq3}, multiplied by $2\pi P_\ell(\xb\cdot\yb)$, can be rewritten as
\begin{equation}
 2\pi P_\ell(\xb\cdot\yb)\int_{-1}^{1}[P_\ell(t)-1]f_k(t)\ud t = \int_{\Sph^2} f_k(\xb\cdot\zb)[P_\ell(\yb\cdot\zb)-P_\ell(\xb\cdot\yb)]\ud\Omega(\zb),
\label{eq4}
\end{equation}
\noindent using the generalized Funk--Hecke formula for $f_k$.
Let us denote by $C_\varepsilon(\xb) = \{ \zb\in\Sph^2: 1-\varepsilon\le\zb\cdot\xb\le1\}$ the region on the sphere for which $f_k\ne f$.
We can rewrite \eqref{eq4} as
\begin{equation}
\begin{array}{ll}
&\dsp2\pi P_\ell(\xb\cdot\yb)\int_{-1}^{1}[P_\ell(t)-1]f_k(t)\ud t,\\[10pt]
=&\dsp\int_{\Sph^2\backslash C_\varepsilon(\xb)} f(\xb\cdot\zb)[P_\ell(\yb\cdot\zb)-P_\ell(\xb\cdot\yb)]\ud\Omega(\zb),\\[10pt]
=&\dsp\int_{\Sph^2} f(\xb\cdot\zb)[P_\ell(\yb\cdot\zb)-P_\ell(\xb\cdot\yb)]\ud\Omega(\zb)
-\int_{C_\varepsilon(\xb)} f(\xb\cdot\zb)[P_\ell(\yb\cdot\zb)-P_\ell(\xb\cdot\yb)]\ud\Omega(\zb).
\end{array}
\label{eq5}
\end{equation}
\noindent Combining \eqref{eq3}, multiplied by $2\pi P_\ell(\xb\cdot\yb)$, with \eqref{eq5} yields
\begin{equation}
\begin{array}{ll}
\dsp2\pi P_\ell(\xb\cdot\yb)\gamma_\ell = & \dsp\int_{\Sph^2} f(\xb\cdot\zb)[P_\ell(\yb\cdot\zb)-P_\ell(\xb\cdot\yb)]\ud\Omega(\zb)
-\int_{C_\varepsilon(\xb)} f(\xb\cdot\zb)[P_\ell(\yb\cdot\zb)-P_\ell(\xb\cdot\yb)]\ud\Omega(\zb)\\[10pt]
& \dsp+\,2\pi P_\ell(\xb\cdot\yb)\Bigg[\int_{1-\varepsilon}^{1}[P_\ell(t)-1]f(t)\ud t - \int_{1-\varepsilon}^{1}[P_\ell(t)-1]f_k(t)\ud t\Bigg].
\end{array}
\end{equation}
\noindent Using $0\leq f_k\leq f$ and $\vert P_\ell(\xb\cdot\yb)\vert\leq1$ leads to
\begin{equation}
\begin{array}{ll}
&\dsp \abs{\int_{\Sph^2} f(\xb\cdot\zb)[P_\ell(\yb\cdot\zb)-P_\ell(\xb\cdot\yb)]\ud\Omega(\zb) - 2\pi P_\ell(\xb\cdot\yb)\gamma_\ell},\\[10pt]
\leq & \dsp 4\pi\abs{\int_{1-\varepsilon}^{1}[P_\ell(t)-1] f(t)\ud t} + 
\abs{\int_{C_\varepsilon(\xb)} f(\xb\cdot\zb)[P_\ell(\yb\cdot\zb)-P_\ell(\xb\cdot\yb)]\ud\Omega(\zb)}.
\label{eq6}
\end{array}
\end{equation}
\noindent Both terms on the right-hand side of \eqref{eq6} go to zero as $\varepsilon\rightarrow0$ or equivalently as $k\to\infty$. Finally, the space
\begin{equation}
\left\{\hbox{Lebesgue measurable $f$} : t\mapsto[P_\ell(t)-1]f(t)\hbox{ absolutely integrable on $[-1,1]$, $\forall\ell=0,1,\ldots$}\right\},
\end{equation}
is the same as $t\mapsto(1-t)f(t)\in L^1([-1,1])$ since the only value of $t\in[-1,1]$ for which $P_\ell(t)-1=0$, $\forall\ell=0,1,\ldots,$ is $t=1$.
\end{proof}

\section{Proof of $\LL_\delta \rightarrow \LL_0$ strongly as $\delta\to0$}\label{appendix:strongconvergence}

Let
\begin{equation}
u(\theta,\varphi) = \sum_{\ell=0}^{+\infty}\sum_{m=-\ell}^{+\ell} u_\ell^mY_\ell^m(\theta,\varphi).
\end{equation}
\begin{definition}[see, e.g., \cite{atkinson2012}] For any $s\in\R$, the Sobolev space $H^s(\Sph^2)$ is the completion of $C^\infty(\Sph^2)$ with respect to the norm
\begin{equation}
\norm{u}_{H^s(\Sph^2)}^2 = \sum_{\ell=0}^{+\infty}\sum_{m=-\ell}^{+\ell}\Big(\ell+\tfrac{1}{2}\Big)^{2s}\abs{u_\ell^m}^2.
\end{equation}
\end{definition}
\begin{definition}
For every $\LL:H^s(\Sph^2)\to H^{s-2}(\Sph^2)$, let $\norm{\cdot}_s$ denote the induced operator norm defined by
\begin{equation}
\norm{\LL}_s = \sup_{0\ne u\in H^s(\Sph^2)}\dfrac{\norm{\LL u}_{H^{s-2}(\Sph^2)}}{\norm{u}_{H^s(\Sph^2)}}.
\end{equation}
\end{definition}
\begin{lemma}\label{lemma:boundspectrum}
For every $-1<\alpha<1$ and $0<\delta\le2$,
\begin{equation}
-\ell(\ell+1) \le \lambda_\delta(\ell) \le 0.
\end{equation}
\end{lemma}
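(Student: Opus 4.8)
The plan is to work straight from the closed-form eigenvalue
\[
\lambda_\delta(\ell) = 2\pi\int_{-1}^1\big[P_\ell(t)-1\big]\rho_\delta(\sqrt{2(1-t)})\ud t,
\]
exploiting two elementary features of the data: the kernel $\rho_\delta(\sqrt{2(1-t)})$ is \emph{nonnegative} on $[-1,1]$, since its prefactor $(1+\alpha)2^{1+\alpha}/(\pi\delta^{2+2\alpha})$ is positive for $-1<\alpha<1$ and $(1-t)^{\alpha-1}>0$ for $t<1$; and the Legendre polynomials satisfy the classical bound $-1\le P_\ell(t)\le1$ on $[-1,1]$ with $P_\ell(1)=1$. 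The upper bound is then immediate: $P_\ell(t)-1\le0$ and $\rho_\delta\ge0$ make the integrand pointwise nonpositive, so $\lambda_\delta(\ell)\le0$.

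For the lower bound I would first establish the one-sided pointwise estimate
\[
0\;\le\;1-P_\ell(t)\;\le\;\tfrac12\,\ell(\ell+1)\,(1-t),\qquad t\in[-1,1].
\]
The left inequality is again $|P_\ell|\le1$; for the right one I would write $1-P_\ell(t)=\int_t^1 P_\ell'(s)\ud s$ and invoke the sharp gradient bound $|P_\ell'(s)|\le P_\ell'(1)=\tfrac12\ell(\ell+1)$, valid on all of $[-1,1]$. Multiplying the pointwise estimate by $\rho_\delta\ge0$ and integrating gives
\[
\lambda_\delta(\ell)\;\ge\;-\pi\,\ell(\ell+1)\int_{-1}^1(1-t)\,\rho_\delta(\sqrt{2(1-t)})\ud t,
\]
so it only remains to evaluate the normalizing integral. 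With $d=1-\delta^2/2$ and the observation that $\sqrt{2(1-t)}\le\delta$ is equivalent to $t\ge d$, a one-line computation yields
\[
\int_{-1}^1(1-t)\,\rho_\delta(\sqrt{2(1-t)})\ud t
=\frac{(1+\alpha)2^{1+\alpha}}{\pi\delta^{2+2\alpha}}\int_d^1(1-t)^{\alpha}\ud t
=\frac{(1+\alpha)2^{1+\alpha}}{\pi\delta^{2+2\alpha}}\cdot\frac{(\delta^2/2)^{\alpha+1}}{\alpha+1}=\frac1\pi,
\]
which is exactly the normalization the constants in \eqref{kernel} were chosen to enforce. Hence $\lambda_\delta(\ell)\ge-\ell(\ell+1)$. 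The same pointwise estimate also gives $\big|[P_\ell(t)-1]\rho_\delta\big|\le\tfrac12\ell(\ell+1)(1-t)\rho_\delta\in L^1([-1,1])$, so every integral above is absolutely convergent and the manipulations are legitimate.

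The only non-routine ingredient, and the place I expect to spend real effort, is the sharp derivative bound $\|P_\ell'\|_{L^\infty([-1,1])}=\tfrac12\ell(\ell+1)$: the generic Markov inequality only delivers $\|P_\ell'\|_\infty\le\ell^2$, which is too weak to reach $-\ell(\ell+1)$, so one genuinely needs the Legendre-specific fact that $|P_\ell'|$ is maximized at the endpoints. I would either cite this (it follows from the classical monotonicity property $\max_{[-1,1]}|P_n^{(a,b)}|=P_n^{(a,b)}(1)$ for $a,b\ge-\tfrac12$, applied with $a=b=1$ and the identity $P_\ell'=\tfrac{\ell+1}{2}P_{\ell-1}^{(1,1)}$), or prove it self-contained through a Sturm-type comparison for $g=1-P_\ell$, which solves $((1-t^2)g')'=\ell(\ell+1)(1-g)$ with $g(1)=0$, $g'(1)=-\tfrac12\ell(\ell+1)$. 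An alternative route for the lower bound—proving $\lambda_\delta(\ell)$ is nondecreasing in $\delta$ and using $\lambda_\delta(\ell)\to-\ell(\ell+1)$ as $\delta\to0$—appears strictly messier, so I would not pursue it.
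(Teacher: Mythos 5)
Your proof is correct and is essentially the paper's argument: both establish $\lambda_\delta(\ell)\le 0$ from $P_\ell\le 1$ and $\rho_\delta\ge 0$, and both obtain the lower bound from the pointwise estimate $0\le 1-P_\ell(t)\le\tfrac12\ell(\ell+1)(1-t)$ combined with the kernel normalization $\int_{-1}^1(1-t)\rho_\delta(\sqrt{2(1-t)})\ud t=1/\pi$ (which the paper carries out in the rescaled form \eqref{eigenvalues2}). The only difference is how that pointwise estimate is derived: the paper telescopes $P_\ell(x)-1=\sum_{\mu=1}^\ell\bigl[P_\mu(x)-P_{\mu-1}(x)\bigr]=-(1-x)\sum_{\mu=1}^\ell P_{\mu-1}^{(1,0)}(x)$ and uses $\sup_{[-1,1]}\vert P_{\mu-1}^{(1,0)}\vert=\mu$, whereas you integrate the sharp bound $\sup_{[-1,1]}\vert P_\ell'\vert=P_\ell'(1)=\tfrac12\ell(\ell+1)$ -- the ``non-routine ingredient'' you flagged is indeed available off the shelf (it is the same endpoint-maximum fact for Jacobi polynomials, \cite[\S 18.14.1]{olver2010}, applied to $P_\ell'=\tfrac{\ell+1}{2}P_{\ell-1}^{(1,1)}$), and the paper's telescoping is simply a way of invoking that fact at order $(1,0)$ instead of $(1,1)$.
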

\begin{proof}
The modulus of the spectrum of $\mathcal{L}_\delta$ is
\begin{equation}
\begin{array}{ll}
\abs{\lambda_\delta(\ell)} & \dsp = \abs{\frac{(1+\alpha)2^{2-\alpha}}{\delta^2}\int_{-1}^1 \dfrac{P_\ell\left[1 -\frac{\delta^2}{2}\left(\frac{1-x}{2}\right)\right]-1}{1-x}(1-x)^\alpha\ud x},\\[10pt]
& \dsp \le \frac{(1+\alpha)2^{2-\alpha}}{\delta^2}\int_{-1}^1 \abs{\dfrac{P_\ell\left[1 -\frac{\delta^2}{2}\left(\frac{1-x}{2}\right)\right]-1}{1-x}}(1-x)^\alpha\ud x.
\end{array}
\end{equation}
Using the inequalities
\begin{equation}
\begin{array}{ll}
\abs{P_\ell(x)-1} & \hspace{-0.25cm} \dsp = \abs{P_\ell(x)-P_{\ell-1}(x) + P_{\ell-1}(x)-1} \le \abs{P_\ell(x)-P_{\ell-1}(x)} + \abs{P_{\ell-1}(x)-1},\\[10pt]
& \hspace{-0.25cm} \dsp \le \sum_{\mu=1}^\ell \abs{P_\mu(x) - P_{\mu-1}(x)},
\end{array}
\end{equation}
and the identity~\cite[\S 18.6.1 \& 18.9.6]{olver2010}
\begin{equation}
P_\mu(x) - P_{\mu-1}(x) = -(1-x)P_{\mu-1}^{(1,0)}(x),
\end{equation}
where $P_{\mu-1}^{(1,0)}(x)$ is a Jacobi polynomial \cite[\S 18.3]{olver2010}, and since \cite[\S 18.14.1]{olver2010}
\begin{equation}
\sup_{x\in[-1,1]}\abs{P_{\mu-1}^{(1,0)}(x)} = \mu,
\end{equation}
we continue bounding the spectrum by
\begin{equation}
\begin{array}{ll}
\abs{\lambda_\delta(\ell)} & \hspace{-0.25cm}\dsp
\le \frac{(1+\alpha)2^{2-\alpha}}{\delta^2}\int_{-1}^1\left\{ \sum_{\mu=1}^\ell\abs{\frac{\delta^2}{2^2}P_{\mu-1}^{(1,0)}\left[1 -\frac{\delta^2}{2}\left(\frac{1-x}{2}\right)\right]} \right\} (1-x)^\alpha\ud x,\\[10pt]
&\hspace{-0.25cm}\dsp
= \frac{(1+\alpha)}{2^\alpha}\int_{-1}^1 \left\{ \sum_{\mu=1}^\ell \abs{P_{\mu-1}^{(1,0)}\left[1 -\frac{\delta^2}{2}\left(\frac{1-x}{2}\right)\right]} \right\} (1-x)^\alpha\ud x,\\[10pt]
& \hspace{-0.25cm}\dsp \le \frac{(1+\alpha)}{2^\alpha}\int_{-1}^1 (1-x)^\alpha\ud x \sum_{\mu=1}^\ell \mu,\\[10pt]
& \hspace{-0.25cm}\dsp = 2\sum_{\mu=1}^\ell \mu = \ell(\ell+1).
\end{array}
\end{equation}
Since the integrand in~\eqref{eigenvalues2} is non-positive, the spectrum is non-positive as well.
\end{proof}
\begin{theorem}
For every $-1<\alpha<1$, $0<\delta\le2$, and $s\ge0$, $\LL_\delta$ is a bounded operator from $H^s(\Sph^2)$ to $H^{s-2}(\Sph^2)$, and
\begin{equation}
\norm{\LL_\delta}_s \le \norm{\LL_0}_s < 1.
\end{equation}
\end{theorem}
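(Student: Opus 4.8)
The plan is to reduce the operator-norm estimate to a pointwise bound on the spectrum, exploiting that $\LL_\delta$ is diagonalized by spherical harmonics. Recall from Section~2 that $\LL_\delta Y_\ell^m = \lambda_\delta(\ell)Y_\ell^m$, so for $u = \sum_{\ell=0}^\infty\sum_{m=-\ell}^{+\ell} u_\ell^m Y_\ell^m \in H^s(\Sph^2)$ one has $\LL_\delta u = \sum_{\ell,m}\lambda_\delta(\ell)\,u_\ell^m Y_\ell^m$, whence
\[
\norm{\LL_\delta u}_{H^{s-2}(\Sph^2)}^2
= \sum_{\ell=0}^\infty\sum_{m=-\ell}^{+\ell}\big(\ell+\tfrac12\big)^{2(s-2)}\abs{\lambda_\delta(\ell)}^2\abs{u_\ell^m}^2
\le \sup_{\ell\ge0}\frac{\abs{\lambda_\delta(\ell)}^2}{(\ell+\tfrac12)^{4}}\;\norm{u}_{H^s(\Sph^2)}^2 .
\]
Testing against a single spherical harmonic $Y_\ell^m$ shows this constant is sharp, so $\norm{\LL_\delta}_s = \sup_{\ell\ge0}\abs{\lambda_\delta(\ell)}/(\ell+\tfrac12)^2$; in particular this quantity is finite precisely when $\LL_\delta$ is bounded from $H^s(\Sph^2)$ to $H^{s-2}(\Sph^2)$, it does not depend on $s$, and the identical formula holds for $\LL_0$ with $\lambda_0(\ell)=-\ell(\ell+1)$. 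Thus the whole statement reduces to comparing these two suprema.

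The monotonicity $\norm{\LL_\delta}_s\le\norm{\LL_0}_s$ is then immediate from Lemma~\ref{lemma:boundspectrum}: it gives $\abs{\lambda_\delta(\ell)}\le\ell(\ell+1)=\abs{\lambda_0(\ell)}$ for every $\ell$, so $\abs{\lambda_\delta(\ell)}/(\ell+\tfrac12)^2\le\abs{\lambda_0(\ell)}/(\ell+\tfrac12)^2$ term by term, and the inequality survives the supremum. For the numerical bound I would use the elementary identity $\ell(\ell+1)=(\ell+\tfrac12)^2-\tfrac14$, which yields $\abs{\lambda_0(\ell)}/(\ell+\tfrac12)^2 = 1-\tfrac14(\ell+\tfrac12)^{-2}<1$ for every $\ell\ge0$; this already gives $\norm{\LL_0}_s\le1$, hence $\norm{\LL_\delta}_s\le1<\infty$ and the asserted boundedness.

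The delicate point — and the one I expect to be the main obstacle — is the \emph{strict} inequality, since the ratios $1-\tfrac14(\ell+\tfrac12)^{-2}$ increase to $1$, so the supremum for $\LL_0$ is the unattained limit $1$ and no slack is visible at the level of Lemma~\ref{lemma:boundspectrum} alone. The extra room for $\LL_\delta$ must be extracted from the finite horizon $\delta\le2$: rescaling $\eqref{eigenvalues2}$ and substituting the Bessel-function (Szeg{\H o}) asymptotics of $P_\ell(\cos\theta)$ shows that $\lambda_\delta(\ell)$ grows strictly more slowly than $\ell^2$ — like $\ell^{-2\alpha}$ when $-1<\alpha\le0$ (e.g.\ $\lambda_2(\ell)\sim-2\ell$ when $\alpha=-\tfrac12$), like $\log\ell$ when $\alpha=0$, and staying bounded when $0<\alpha<1$. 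Granting this, $\abs{\lambda_\delta(\ell)}/(\ell+\tfrac12)^2\to0$, so the supremum defining $\norm{\LL_\delta}_s$ is attained at some finite degree, and as the maximum of finitely many numbers each $<1$ it is itself $<1$. Establishing the asymptotic decay of $\lambda_\delta(\ell)$ is the only genuinely analytic step; everything else is the bookkeeping above.
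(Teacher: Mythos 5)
Your reduction of the operator norm to $\sup_{\ell\ge0}\abs{\lambda_\delta(\ell)}/(\ell+\tfrac12)^2$ via the diagonal action on spherical harmonics, followed by the termwise domination $\abs{\lambda_\delta(\ell)}\le\ell(\ell+1)=\abs{\lambda_0(\ell)}$ from Lemma~\ref{lemma:boundspectrum}, is precisely the argument the paper compresses into its one-sentence proof (``the definition of the induced operator norm, Lemma~\ref{lemma:boundspectrum}, and the completeness of spherical harmonics''); you have simply written out the bookkeeping, and that part is correct: it gives boundedness, independence of $s$, and $\norm{\LL_\delta}_s\le\norm{\LL_0}_s\le1$.

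Where you go beyond the paper is in taking the strict inequality seriously, and your diagnosis is right: with the stated Sobolev norms, $\norm{\LL_0}_s=\sup_{\ell\ge0}\bigl(1-(2\ell+1)^{-2}\bigr)=1$, so the printed claim ``$\norm{\LL_0}_s<1$'' cannot literally hold; the paper's proof never confronts this, and the honest statement is ``$\le1$'', which is all the subsequent strong-convergence theorem actually uses (it only needs $\norm{\LL_\delta}_s+\norm{\LL_0}_s\le2$). Your proposed route to $\norm{\LL_\delta}_s<1$ for $\delta>0$ --- show $\abs{\lambda_\delta(\ell)}=o(\ell^2)$ so the supremum is attained at a finite degree and is therefore a maximum of numbers each $<1$ --- is sound in outline and the asymptotic rates you quote are correct, but as written this is the one genuinely analytic step and you only assert it (``granting this''), so as a proof it has a gap there. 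Note that you do not need Szeg\H{o}'s formula to close it: the telescoping estimate already inside the proof of Lemma~\ref{lemma:boundspectrum} gives $0\le1-P_\ell(t)\le\min\{2,\tfrac{\ell(\ell+1)}{2}(1-t)\}$, and splitting the integral in \eqref{eigenvalues} at $1-t\asymp\ell^{-2}$ yields $\abs{\lambda_\delta(\ell)}=\OO(\ell^{-2\alpha})$ for $-1<\alpha<0$, $\OO(\log\ell)$ for $\alpha=0$, and $\OO(1)$ for $0<\alpha<1$, all $o(\ell^2)$. Even with that supplied, however, your argument repairs only the $\LL_\delta$ half of the displayed inequality; the ``$\norm{\LL_0}_s<1$'' half is false under the paper's definitions and should be weakened to $\le1$.
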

\begin{proof}
These inequalities follow naturally from the definition of the induced operator norm, Lemma~\ref{lemma:boundspectrum}, and the completeness of spherical harmonics in $H^s(\Sph^2)\hookrightarrow L^2(\Sph^2)$.
\end{proof}
\begin{lemma}
For every $\ell=0,1,\ldots,$ $-1<\alpha<1$ and $\varepsilon>0$, there exists $0<\delta\le\frac{4\sqrt{\varepsilon}}{\ell+2}$ such that
\begin{equation}
\abs{\lambda_\delta(\ell) - \lambda_0(\ell)} \le \varepsilon\abs{\lambda_0(\ell)}.
\end{equation}
\end{lemma}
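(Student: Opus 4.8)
The plan is to expand the Legendre polynomial $P_\ell$ to second order about the point $1$ inside the integral \eqref{eigenvalues2} and to carry the remainder through explicitly. If $\varepsilon>1$ the claim is immediate from Lemma~\ref{lemma:boundspectrum}, which gives $\abs{\lambda_\delta(\ell)-\lambda_0(\ell)}\le\ell(\ell+1)=\abs{\lambda_0(\ell)}\le\varepsilon\abs{\lambda_0(\ell)}$ for the admissible choice $\delta=\tfrac{4}{\ell+2}\le2$. So assume $0<\varepsilon\le1$ and take $\delta=\tfrac{4\sqrt\varepsilon}{\ell+2}$, which then lies in $(0,2]$.

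First I would rewrite the eigenvalue. Putting $s=s(x)=\tfrac{\delta^2(1-x)}{4}$, so that the argument of $P_\ell$ in \eqref{eigenvalues2} is $1-s$ and $(1-x)^{-1}=\tfrac{\delta^2}{4s}$, and combining this with the prefactor $\tfrac{(1+\alpha)2^{2-\alpha}}{\delta^2}$ of \eqref{eigenvalues2}, one gets
\begin{equation}
\lambda_\delta(\ell)=(1+\alpha)2^{-\alpha}\int_{-1}^1\frac{P_\ell(1-s)-1}{s}\,(1-x)^\alpha\ud x .
\end{equation}
Since $\int_{-1}^1(1-x)^\alpha\ud x=\tfrac{2^{\alpha+1}}{\alpha+1}$, the same prefactor reproduces $\lambda_0(\ell)=-\ell(\ell+1)=(1+\alpha)2^{-\alpha}\int_{-1}^1\left(-\tfrac{\ell(\ell+1)}{2}\right)(1-x)^\alpha\ud x$, hence
\begin{equation}
\lambda_\delta(\ell)-\lambda_0(\ell)=(1+\alpha)2^{-\alpha}\int_{-1}^1\left[\frac{P_\ell(1-s)-1}{s}+\frac{\ell(\ell+1)}{2}\right](1-x)^\alpha\ud x .
\end{equation}

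Next, $P_\ell$ is a polynomial with $P_\ell'(1)=\tfrac{\ell(\ell+1)}{2}$, so Taylor's theorem with Lagrange remainder gives $P_\ell(1-s)-1+\tfrac{\ell(\ell+1)}{2}s=\tfrac{s^2}{2}P_\ell''(\eta)$ for some $\eta$ between $1-s$ and $1$; since $0\le s\le\delta^2/2\le2$ we have $\eta\in[-1,1]$, so the bracketed integrand equals $\tfrac{s}{2}P_\ell''(\eta)$, of modulus at most $\tfrac{s}{2}\sup_{[-1,1]}\abs{P_\ell''}$. As in the proof of Lemma~\ref{lemma:boundspectrum}, $P_\ell''$ is a positive multiple of the Jacobi polynomial $P_{\ell-2}^{(2,2)}$, whose modulus on $[-1,1]$ is attained at the endpoint~\cite[\S 18.14.1]{olver2010}, so $\sup_{[-1,1]}\abs{P_\ell''}=P_\ell''(1)=\tfrac{(\ell-1)\ell(\ell+1)(\ell+2)}{8}$. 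Substituting $s=\tfrac{\delta^2(1-x)}{4}$ back in, using $\int_{-1}^1(1-x)^{1+\alpha}\ud x=\tfrac{2^{2+\alpha}}{2+\alpha}$ and $\tfrac{1+\alpha}{2+\alpha}<1$, I obtain
\begin{equation}
\abs{\lambda_\delta(\ell)-\lambda_0(\ell)}\le\frac{1+\alpha}{2+\alpha}\cdot\frac{(\ell-1)\ell(\ell+1)(\ell+2)}{16}\,\delta^2\le\frac{(\ell-1)\ell(\ell+1)(\ell+2)}{16}\,\delta^2 .
\end{equation}
Inserting $\delta^2=\tfrac{16\varepsilon}{(\ell+2)^2}$ finally gives, for every $\ell\ge0$, $\abs{\lambda_\delta(\ell)-\lambda_0(\ell)}\le\tfrac{(\ell-1)\ell(\ell+1)}{\ell+2}\,\varepsilon\le\ell(\ell+1)\,\varepsilon=\abs{\lambda_0(\ell)}\,\varepsilon$, using $\tfrac{\ell-1}{\ell+2}\le1$; this is the asserted estimate (and the cases $\ell\in\{0,1\}$ require no separate treatment, the bound having a factor $(\ell-1)\ell$).

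The one step that is not purely routine is the uniform bound $\sup_{[-1,1]}\abs{P_\ell''}=P_\ell''(1)$; it is classical, and the paper already invokes the analogous statement for $P_{\mu-1}^{(1,0)}$ in the proof of Lemma~\ref{lemma:boundspectrum}. Everything else is a second-order Taylor expansion and two beta integrals, so I anticipate no genuine obstacle.
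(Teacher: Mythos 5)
Your proof is correct and follows essentially the same route as the paper: a second-order Taylor expansion of $P_\ell$ about $t=1$ with Lagrange remainder, the bound $\sup_{[-1,1]}\abs{P_\ell''}=P_\ell''(1)=\tfrac{(\ell-1)\ell(\ell+1)(\ell+2)}{8}$ via the Jacobi polynomial $P_{\ell-2}^{(2,2)}$ and \cite[\S 18.14.1]{olver2010}, the beta integral $\int_{-1}^1(1-x)^{1+\alpha}\ud x$, and substitution of $\delta^2=\tfrac{16\varepsilon}{(\ell+2)^2}$. The only differences are minor refinements (letting the intermediate point $\eta$ depend on $x$, and the separate treatment of $\varepsilon>1$ to keep $\delta\le2$), which slightly tighten the paper's argument but do not change it.
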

\begin{proof}
By the Lagrange form of the remainder in Taylor's theorem, there exists a $\xi\in[-1,1]$ such that
\begin{equation}
P_\ell\left[1 -\frac{\delta^2}{2}\left(\frac{1-x}{2}\right)\right] = P_\ell(1) - P_\ell'(1)\frac{\delta^2}{2^2}(1-x) + P_\ell''\left[1 -\frac{\delta^2}{2}\left(\frac{1-\xi}{2}\right)\right]\frac{\delta^4}{2^4}\frac{(1-x)^2}{2}.
\end{equation}
Rearranging terms,
\begin{equation}
\dfrac{P_\ell\left[1 -\frac{\delta^2}{2}\left(\frac{1-x}{2}\right)\right]-1}{1-x} = -\frac{\delta^2}{2^2}\dfrac{\ell(\ell+1)}{2} + P_\ell''\left[1 -\frac{\delta^2}{2}\left(\frac{1-\xi}{2}\right)\right]\frac{\delta^4}{2^4}\frac{(1-x)}{2}.
\end{equation}
Now, the second derivative of the Legendre polynomials is given in terms of the Jacobi polynomials \cite[\S 18.3]{olver2010}
\begin{equation}
P_\ell''(x) = \dfrac{(\ell+1)(\ell+2)}{2^2}P_{\ell-2}^{(2,2)}(x),
\end{equation}
and since \cite[\S 18.14.1]{olver2010}
\begin{equation}
\sup_{x\in[-1,1]}\abs{P_{\ell-2}^{(2,2)}(x)} = \dfrac{(\ell-1)\ell}{2},
\end{equation}
we obtain
\begin{equation}
\begin{array}{ll}
\abs{\lambda_\delta(\ell)-\lambda_0(\ell)} & \hspace{-0.25cm}\dsp \le \dfrac{(\alpha+1)}{2^\alpha}\int_{-1}^1\dfrac{(\ell-1)\ell(\ell+1)(\ell+2)\delta^2}{2^6}(1-x)^{\alpha+1}\ud x,\\[10pt]
& \hspace{-0.25cm}\dsp \le \dfrac{\alpha+1}{\alpha+2}\dfrac{(\ell-1)\ell(\ell+1)(\ell+2)\delta^2}{2^4},\\[10pt]
& \hspace{-0.25cm}\dsp \le \dfrac{(\ell-1)\ell(\ell+1)(\ell+2)\delta^2}{2^4},\\[10pt]
& \hspace{-0.25cm}\dsp \le \dfrac{\ell(\ell+1)(\ell+2)^2\delta^2}{2^4} = \varepsilon\abs{\lambda_0(\ell)}.
\end{array}
\end{equation}
\end{proof}
We are now in a position to prove strong convergence of the nonlocal operators to the Laplace--Beltrami operator.
\begin{theorem}
For every $-1<\alpha<1$ and $s\ge0$, $\LL_\delta \rightarrow \LL_0$ strongly as $\delta\to0$. That is,
\begin{equation}
\forall u\in H^s(\Sph^2),\quad\norm{(\LL_\delta - \LL_0)u}_{H^{s-2}(\Sph^2)} \to 0,\quad{\rm as}\quad\delta\to0.
\end{equation}
\end{theorem}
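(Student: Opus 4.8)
The plan is to exploit that $\LL_\delta$ and $\LL_0$ are simultaneously diagonalized by the spherical harmonics, so that writing $u=\sum_{\ell=0}^{\infty}\sum_{m=-\ell}^{\ell}u_\ell^m Y_\ell^m$ one has $(\LL_\delta-\LL_0)u=\sum_{\ell,m}\big(\lambda_\delta(\ell)-\lambda_0(\ell)\big)u_\ell^m Y_\ell^m$ and hence, by the definition of the Sobolev norm,
\[
\norm{(\LL_\delta-\LL_0)u}_{H^{s-2}(\Sph^2)}^2=\sum_{\ell=0}^{\infty}\Big(\ell+\tfrac12\Big)^{2(s-2)}\abs{\lambda_\delta(\ell)-\lambda_0(\ell)}^2\sum_{m=-\ell}^{\ell}\abs{u_\ell^m}^2 .
\]
The strategy is the classical Fourier-multiplier argument: split this series at a fixed frequency $N$, bound the tail $\ell>N$ by a $\delta$-uniform estimate, and bound the head $\ell\le N$ by letting $\delta\to0$.

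For the tail, I would combine $\lambda_0(\ell)=-\ell(\ell+1)$ with Lemma~\ref{lemma:boundspectrum}, which states that $-\ell(\ell+1)\le\lambda_\delta(\ell)\le0$ for every $0<\delta\le2$; this gives the $\delta$-independent bound $\abs{\lambda_\delta(\ell)-\lambda_0(\ell)}\le\ell(\ell+1)\le(\ell+\tfrac12)^2$. Consequently the $\ell>N$ portion of the sum is at most $\sum_{\ell>N}(\ell+\tfrac12)^{2s}\sum_{m=-\ell}^{\ell}\abs{u_\ell^m}^2$, which is a tail of the convergent series defining $\norm{u}_{H^s(\Sph^2)}^2$. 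Given $\varepsilon>0$, I would choose $N=N(\varepsilon,u)$ so that this tail is $<\varepsilon/2$, and note that this choice is independent of $\delta$.

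With $N$ now fixed, I would handle the finite head $\ell\le N$. The preceding lemma — indeed the bound $\abs{\lambda_\delta(\ell)-\lambda_0(\ell)}\le \ell(\ell+1)(\ell+2)^2\delta^2/2^4$ obtained in its proof — shows that $\lambda_\delta(\ell)\to\lambda_0(\ell)$ as $\delta\to0$ for each fixed $\ell$, with rate $O(\delta^2)$, so $M_N(\delta):=\max_{0\le\ell\le N}\abs{\lambda_\delta(\ell)-\lambda_0(\ell)}\to0$ as $\delta\to0$. The $\ell\le N$ portion of the sum is therefore bounded by $M_N(\delta)^2\,\norm{u}_{H^{s-2}(\Sph^2)}^2$, and since $u\in H^s(\Sph^2)\hookrightarrow H^{s-2}(\Sph^2)$ this factor is finite and the whole contribution can be made $<\varepsilon/2$ by taking $\delta$ small enough. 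Adding the head and tail estimates yields $\norm{(\LL_\delta-\LL_0)u}_{H^{s-2}(\Sph^2)}^2<\varepsilon$ for all sufficiently small $\delta$, which is the assertion.

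I do not expect a genuine obstacle in carrying this out: it is the textbook proof that a family of Fourier multipliers whose symbols converge pointwise, and are dominated by a common symbol of the right order, converges strongly. The only point that matters — and the place where the real work already sits — is that the two earlier lemmas furnish exactly the two ingredients the argument needs, namely a $\delta$-uniform majorant $\abs{\lambda_\delta(\ell)-\lambda_0(\ell)}\le(\ell+\tfrac12)^2$ carrying precisely the symbol growth of a second-order operator, so that no loss beyond $H^s\to H^{s-2}$ occurs in the tail, together with an $O(\delta^2)$ convergence rate at each fixed $\ell$. It is worth emphasizing that the convergence is \emph{not} uniform in $u$, since $\abs{\lambda_\delta(\ell)-\lambda_0(\ell)}/(\ell+\tfrac12)^2$ does not tend to $0$ uniformly in $\ell$ as $\delta\to0$; this is why one can only hope for strong convergence here and not $\norm{\LL_\delta-\LL_0}_s\to0$.
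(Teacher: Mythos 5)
Your proof is correct and takes essentially the same route as the paper: both arguments split $u$ into low and high spherical-harmonic frequencies, bound the tail $\delta$-uniformly via $-\ell(\ell+1)\le\lambda_\delta(\ell)\le0$ from Lemma~\ref{lemma:boundspectrum} (which is exactly what underlies the paper's estimate $\norm{\LL_\delta}_s\le\norm{\LL_0}_s$), and handle the finitely many low modes by the $O(\delta^2)$ convergence $\lambda_\delta(\ell)\to\lambda_0(\ell)$ from the preceding lemma. The only cosmetic difference is that you work directly on the spectral sum, while the paper phrases the same decomposition through the truncation $u_n$ and the induced operator norms.
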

\begin{proof}
Let
\begin{equation}
u_n(\theta,\varphi) = \sum_{\ell=0}^{+n}\sum_{m=-\ell}^{+\ell} u_\ell^mY_\ell^m(\theta,\varphi),
\end{equation}
be the degree-$n$ truncation of $u\in H^s(\Sph^2)$. 
For every $\varepsilon>0$ and $n=0,1,\ldots,$ there exists $0<\delta\le\frac{4\sqrt{\varepsilon}}{n+2}$ such that
\begin{equation}
\begin{array}{ll}
\norm{(\LL_\delta - \LL_0)u}_{H^{s-2}(\Sph^2)} & \hspace{-0.25cm}\dsp 
\le \norm{(\LL_\delta - \LL_0)u_n}_{H^{s-2}(\Sph^2)} + \norm{\LL_\delta(u-u_n)}_{H^{s-2}(\Sph^2)} + \norm{\LL_0(u-u_n)}_{H^{s-2}(\Sph^2)},\\[10pt]
& \hspace{-0.25cm}\dsp \le \norm{(\LL_\delta - \LL_0)u_n}_{H^{s-2}(\Sph^2)} + \left(\norm{\LL_\delta}_s + \norm{\LL_0}_s\right)\norm{u-u_n}_{H^s(\Sph^2)},\\[10pt]
& \hspace{-0.25cm}\dsp < \varepsilon\norm{u}_{H^s(\Sph^2)} + 2\norm{u-u_n}_{H^s(\Sph^2)}.
\end{array}
\end{equation}
Thus, the norm is bounded by the first term that is arbitrarily small and the second term that converges to $0$ as $n\to\infty$ or equivalently as $\delta\to0$; for definiteness, choose
\begin{equation}
\varepsilon = \dfrac{1}{(n+2)^2},\quad \delta = \dfrac{4}{(n+2)^2},\quad\hbox{and take}\quad n\to\infty.
\end{equation}
Convergence of the second term follows from the completeness of spherical harmonics in $H^s(\Sph^2)\hookrightarrow L^2(\Sph^2)$.
\end{proof}

\end{document}